\numberwithin{equation}{section}
\theoremstyle{theorem}
\newtheorem{thm}{Theorem}[section]
\newtheorem{lemma}[thm]{Lemma}
\newtheorem{cor}[thm]{Corollary}
\theoremstyle{definition}
\newtheorem{defn}[thm]{Definition}
\theoremstyle{remark}
\newtheorem{eg}[thm]{Example}
\newtheorem{rmk}[thm]{Remark}
\newcommand{\id}{\operatorname{id}}
\newcommand{\lsp}{\operatorname{span}}
\newcommand{\clsp}{\overline{\lsp}}
\newcommand{\FE}{\operatorname{FE}}
\newcommand{\Tr}{\operatorname{Tr}}
\newcommand{\CC}{\mathbb{C}}
\newcommand{\NN}{\mathbb{N}}
\newcommand{\TT}{\mathbb{T}}
\newcommand{\ZZ}{\mathbb{Z}}
\newcommand{\RR}{\mathbb{R}}
\newcommand{\QQ}{\mathbb{Q}}
\newcommand{\minimatrix}[4]{\bigl(\begin{smallmatrix}#1 & #2 \\ #3 & #4\end{smallmatrix} \bigr)}
\newcommand{\minivector}[2]{\bigl(\begin{smallmatrix}#1 \\ #2 \end{smallmatrix} \bigr)}
\newcommand{\dstar}{d_*}
\title[Twisted higher-rank-graph $C^*$-algebras associated to Bratteli diagrams]
{Twisted $k$-graph algebras associated to Bratteli diagrams}
\date{18 March 2014}
\author{David Pask}
\email{dpask, asierakow, asims@uow.edu.au}
\author{Adam Sierakowski}
\author{Aidan Sims}
\address{School of Mathematics and Applied Statistics \\
University of Wollongong\\
Wollongong NSW 2522\\
AUSTRALIA}
\subjclass[2010]{46L05}
\keywords{Graph Algebras}
\thanks{This research was supported by the Australian Research Council.}
\begin{document}

\begin{abstract}
Given a system of coverings of $k$-graphs, we show that the cohomology of the resulting
$(k+1)$-graph is isomorphic to that of any one of the $k$-graphs in the system. We then
consider Bratteli diagrams of 2-graphs whose twisted $C^*$-algebras are matrix algebras
over noncommutative tori. For such systems we calculate the ordered $K$-theory and the
gauge-invariant semifinite traces of the resulting 3-graph $C^*$-algebras. We deduce that
every simple $C^*$-algebra of this form is Morita equivalent to the $C^*$-algebra of a
rank-2 Bratteli diagram in the sense of Pask-Raeburn-R{\o}rdam-Sims.
\end{abstract}

\maketitle

\section{Introduction}
Elliott's classification program, as described in \cite{ref9}, has been a very active
field of research in recent years. The program began with Elliott's classification of AF
algebras by their $K_0$-groups in \cite{Ell0}. Elliott subsequently expanded this
classification program to encompass all simple A$\TT$-algebras of real rank zero
\cite{Ell}, and then, in work with Gong, expanded it still further to encompass more
general AH algebras \cite{EllGon}, leading to the classification of simple AH-algebras of
slow dimension growth and of real rank zero \cite{ref11, ref20}. Paralleling these
results for stably finite $C^*$-algebras is the classification by $K$-theory of Kirchberg
algebras in the UCT class by Kirchberg and Phillips in the mid 1990s (see \cite{ref11,
ref20}).

Shortly after the introduction of graph $C^*$-algebras, it was shown in \cite{KumPasRae}
that every simple graph $C^*$-algebra is either purely infinite or AF, and so is
classified by $K$-theory either by the results of \cite{Ell0} or by those of \cite{ref11,
ref20}. As a result, the range of Morita-equivalence classes of simple $C^*$-algebras
that can be realised by graph $C^*$-algebras is completely understood. The introduction
of $k$-graphs and their $C^*$-algebras in \cite{KumPas} naturally raised the analogous
question. But it was shown in \cite{PasRaeRorSim} that there exist simple $k$-graph
algebras which are direct limits of matrix algebras over $C(\TT)$ and are neither AF nor
purely infinite. The examples constructed there are, nonetheless, classified by their
$K$-theory by \cite{Ell}, and the range of the invariant that they achieve is understood.

The general question of which simple $C^*$-algebras are Morita equivalent to $k$-graph
$C^*$-algebras is far from being settled, and the corresponding question for the twisted
$k$-graph $C^*$-algebras of \cite{KumPasSim2} is even less-well understood. In this paper
we consider a class of twisted $k$-graph $C^*$-algebras constructed using a procedure
akin to that in \cite{PasRaeRorSim}, except that the simple cycles there used to generate
copies of $M_n(C(\TT))$ are replaced here by $2$-dimensional simple cycles whose twisted
$C^*$-algebras are matrix algebras over noncommutative tori. When the noncommutative tori
all correspond to the same irrational rotation, we compute the ordered $K$-theory of
these examples by adapting Pimsner and Voiculescu's computation of the ordered $K$-theory
of the rotation algebras. Computing the ordered $K_0$-groups makes heavy use of traces on
the approximating subalgebras, and we finish by expanding on this to produce a detailed
analysis of traces on the $C^*$-algebras of rank-3 Bratteli diagrams.

Remarkably, it turns out that our construction does not expand the range of
Morita-equivalence classes of $C^*$-algebras obtained in \cite{PasRaeRorSim}: For every
rank-3 Bratteli diagram $E$ and every irrational $\theta$ such that the corresponding
twisted $C^*$-algebra $C^*(\Lambda_E, \dstar{}c^3_\theta)$ of the rank-3 Bratteli diagram
$\Lambda_E$ is simple, there is a rank-2 Bratteli diagram $\Gamma$ as in
\cite{PasRaeRorSim} whose $C^*$-algebra is Morita equivalent to $C^*(\Lambda_E,
\dstar{}c^3_\theta)$ (see Corollary~\ref{cor:range}). However, this requires both Elliott's
classification theorem and the Effros-Handelman-Shen characterisation of Riesz groups as
dimension groups. In particular, the rank-2 Bratteli diagram $\Gamma$ will depend heavily
on the value of $\theta$ as well as the diagram $E$.

\smallskip

In Section~\ref{sec3}, following a short introduction on twisted $(k+1)$-graph
$C^*$-algebras associated to covering sequences $(\Lambda_n, p_n)$ of $k$-graphs, we look
at the categorical cohomology of such $(k+1)$-graphs. We prove in Theorem \ref{cor2.6}
that each 2-cocycle on the $(k+1)$-graph $\Lambda$ associated to a covering sequence
$(\Lambda_n, p_n)$ is --- up to cohomology --- completely determined by its restriction
to the $k$-graph $\Lambda_1$.


In Section~\ref{sec5} we extend the notion of a covering sequence to a Bratteli diagram
of covering maps for a singly connected Bratteli diagram $E$. We construct a
$(k+1)$-graph $\Lambda$ from a Bratteli diagram of covering maps between $k$-graphs
$(\Lambda_v)_{v\in E^0}$ and --- upon fixing a 2-cocycle $c$ on $\Lambda$ --- we show how
to describe the twisted $(k+1)$-graph $C^*$-algebra $C^*(\Lambda, c)$, up to Morita
equivalence, as an inductive limit of twisted $k$-graph $C^*$-algebras, each of which is
a direct sum of $C^*$-algebras of the form $M_{n_v}(C^*(\Lambda_v, c|_{\Lambda_v}))$ (see
Theorem~\ref{thm5.6}).

In Section~\ref{sec:rank3BD}, we prove our main results. We consider a particular class
of 3-graphs associated to Bratteli diagrams of covering maps between rank-2 simple cycles; we call
these 3-graphs rank-3 Bratteli diagrams. We show in Theorem~\ref{cor5.8} how to compute
the ordered $K$-theory of twisted $C^*$-algebras of rank-3 Bratteli diagrams when the
twisting cocycle is determined by a fixed irrational angle $\theta$. We investigate when
such C*-algebras are simple in Corollary~\ref{cor:classifiable} and then prove in
Corollary~\ref{cor:range} in the presence of simplicity these C*-algebras can in fact be realised as
the $C^*$-algebras of rank-2 Bratteli diagrams in the sense of \cite{PasRaeRorSim}. In
Section~\ref{sec6} we briefly present some explicit examples of our $K$-theory
calculations. In Section~\ref{sec:BDtraces}, we describe an auxiliary AF algebra $C^*(F)$
associated to each rank-3 Bratteli diagram $\Lambda$, and exhibit an injection from
semifinite lower-semicontinuous traces on $C^*(F)$ to gauge-invariant semifinite
lower-semicontinuous traces of $C^*(\Lambda, c)$. We show that when $c$ is determined by
a fixed irrational rotation $\theta$, the map from traces on $C^*(F)$ to traces on
$C^*(\Lambda, c)$ is a bijection.

\section{Preliminaries and notation}

In this section we introduce the notion of $k$-graphs, covering sequences of $k$-graphs,
and twisted $k$-graph $C^*$-algebras which we can associate to covering sequences. These
are the main objects of study in this paper.

\subsection{\texorpdfstring{$k$}{k}-graphs}
Following \cite{KumPas, PasQuiRae, RaeSimYee} we briefly recall the notion of $k$-graphs.
For $k \ge 0$, a \emph{$k$-graph} is a nonempty countable small category equipped with a
functor $d\colon\Lambda \to \NN^k$ that satisfies the \emph{factorisation property}: for all
$\lambda \in \Lambda$ and $m,n\in \NN^k$ such that $d(\lambda) = m + n$ there exist
unique $\mu, \nu\in \Lambda$ such that $d(\mu) = m$, $d(\nu) = n$, and $\lambda =
\mu\nu$. When $d(\lambda) = n$ we say $\lambda$ has \emph{degree} $n$, and we write
$\Lambda^n = d^{-1}(n)$. The standard generators of $\NN^k$ are denoted $e_1, \dots
,e_k$, and we write $n_i$ for the $i^{\textrm{th}}$ coordinate of $n \in \NN^k$. For
$m,n\in \NN^k$, we write $m \vee n $ for their coordinate-wise maximum, and define a
partial order on $\NN^k$ by $m\leq n$ if $m_i\leq n_i$ for all $i$.

If $\Lambda$ is a $k$-graph, its \emph{vertices} are the elements of $\Lambda^0$. The
factorisation property implies that these are precisely the identity morphisms, and so
can be identified with the objects. For $\alpha \in \Lambda$ the \emph{source}
$s(\alpha)$ is the domain of $\alpha$, and the \emph{range} $r(\alpha)$ is the codomain
of $\alpha$ (strictly speaking, $s(\alpha)$ and $r(\alpha)$ are the identity morphisms
associated to the domain and codomain of $\alpha$).

For $u, v \in \Lambda^0$ and $E\subset \Lambda$, we write $uE := E \cap r^{-1}(u)$ and
$Ev := E \cap s^{-1}(v)$. For $n \in \NN^k$, we write
\[
\Lambda^{\leq n} := \{\lambda \in \Lambda : d(\lambda) \leq n\text{ and }
    s(\lambda)\Lambda^{e_i} = \emptyset \text{ whenever } d(\lambda) + e_i \leq n \}.
\]
We say that $\Lambda$ is \emph{connected} if the equivalence relation on $\Lambda^0$
generated by $\{(v,w)\in \Lambda^0 \times \Lambda^0 : v\Lambda w \neq \emptyset \}$ is
the whole of $\Lambda^0 \times \Lambda^0$. We say that $\Lambda$ is \emph{strongly
connected} if $v\Lambda w$ is nonempty for all $v,w\in \Lambda^0$. A \emph{morphism}
between $k$-graphs is a degree-preserving functor. We say that $\Lambda$ is
\emph{row-finite} if $v\Lambda^n$ is finite for all $v \in \Lambda^0$ and $n\in \NN^k$,
and $\Lambda$ is \emph{locally convex} if whenever $1 \leq i < j \leq k$, $e\in
\Lambda^{e_i}$, $f\in \Lambda^{e_j}$ and $r(e) = r(f)$, we can extend both $e$ and $f$ to
paths $ee'$ and $ff'$ in $\Lambda^{e_i+e_j}$. For $\lambda,\mu\in \Lambda$ we write
\[
    \Lambda^{\min}(\lambda,\mu):=\{(\alpha,\beta): \lambda\alpha = \mu\beta\text{ and }
        d(\lambda\alpha)=d(\lambda) \vee d(\mu)\}
\]
for the collection of pairs which give minimal common extensions of $\lambda$ and $\mu$.

A standard example of a $k$-graph is $T_k := \NN^k$ regarded as a $k$-graph with $d =
\id_{\NN^k}$.

\subsection{Covering sequences}

Following \cite{KumPasSim} a surjective morphism $p:\Gamma \to \Lambda$ between
$k$-graphs is a \emph{covering} if it restricts to bijections $\Gamma v \mapsto \Lambda
p(v)$ and $v\Gamma \mapsto p(v)\Lambda$ for $v \in \Gamma^0$. A covering $p:\Gamma \to
\Lambda$ is \emph{finite} if $p^{-1}(v)$ is finite for all $v\in \Lambda^0$.

\begin{defn}
A \emph{covering sequence} $(\Lambda_n, p_n)$ of $k$-graphs consists of $k$-graphs
$\Lambda_n$ and a sequence
\[
\xymatrix{\Lambda_1  &\Lambda_2 \ar[l]^{p_1} & \Lambda_3 \ar[l]^{p_2} & \ar[l]^{p_3} \dots }.
\]
of covering maps $p_n\colon \Lambda_{n+1}\to \Lambda_n$.
\end{defn}

Given a covering sequence $(\Lambda_n, p_n)$ of $k$-graphs
\cite[Corollary~2.11]{KumPasSim} shows that there exist a unique $(k+1)$-graph $\Lambda =
\lim_n (\Lambda_n, p_n)$, together with injective functors $\iota_n: \Lambda_n \to
\Lambda$ and a bijective map $e: \bigsqcup_{n\geq 2}\Lambda^0_{n}\to \Lambda^{e_{k+1}}$,
such that, identifying $\NN^{k+1}$ with $\NN^k \oplus \NN$,
\begin{enumerate}
\item $d(\iota_n(\lambda))=(d(\lambda),0)$, for $\lambda\in \Lambda_n$,
\item $\iota_m(\Lambda_m) \cap \iota_n(\Lambda_n) = \emptyset$, for $m\neq n$,
\item $\bigsqcup_{n\geq 1} \iota_n(\Lambda_n) = \{\lambda \in \Lambda :
    d(\lambda)_{k+1}=0\}$,
\item $s(e(v))=\iota_{n+1}(v)$, $r(e(v))=\iota_n(p_n(v))$, for $v\in
    \Lambda_{n+1}^0$, and
\item $e(r(\lambda))\iota_{n+1}(\lambda) = \iota_n(p_n(\lambda))e(s(\lambda))$, for
    $\lambda\in \Lambda_{n+1}$.
\end{enumerate}
We often suppress the inclusion maps $\iota_n$ and view the $\Lambda_n$ as subsets of
$\Lambda$. For $n>m$ we define $p_{m,n} := p_{m}\circ p_{m+1}\circ \cdots \circ
p_{n-2}\circ p_{n-1}: \Lambda_{n}\to \Lambda_{m}$; we define $p_{m,m} = \id_{\Lambda_m}$.

\subsection{Twisted \texorpdfstring{$k$}{k}-graph \texorpdfstring{$C^*$}{C*}-algebras}

Let $\Lambda$ be a row-finite locally convex $k$-graph and let $c\in Z^2(\Lambda,\TT)$. A
\emph{Cuntz-Krieger $(\Lambda, c)$-family} in a $C^*$-algebra $B$ is a function $s :
\lambda \mapsto s_\lambda$ from $\Lambda$ to $B$ such that
\begin{enumerate}
\item[(CK1)] $\{s_v : v\in \Lambda^0\}$ is a collection of mutually orthogonal
    projections;
\item[(CK2)] $s_\mu s_\nu = c(\mu,\nu)s_{\mu\nu}$ whenever $s(\mu) = r(\nu)$;
\item[(CK3)] $s^*_\lambda s_\lambda = s_{s(\lambda)}$ for all $\lambda\in \Lambda$;
    and
\item[(CK4)] $s_v = \sum_{\lambda \in v\Lambda^{\leq n}} s_\lambda s_\lambda^*$ for
    all $v\in \Lambda^0$ and $n\in \NN^k$.
\end{enumerate}
The following Lemma shows that our definition is consistent with previous literature.

\begin{lemma}
Let $\Lambda$ be a row-finite locally convex $k$-graph and let $c\in Z^2(\Lambda,\TT)$. A
function $s\colon\lambda \mapsto s_\lambda$ from $\Lambda$ to a $C^*$-algebra $B$ is a
Cuntz-Krieger $(\Lambda, c)$-family if and only if it is a Cuntz-Krieger $(\Lambda,
c)$-family in the sense of \cite{Whi}.
\end{lemma}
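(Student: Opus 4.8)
The plan is to write down the definition of a Cuntz--Krieger $(\Lambda,c)$-family used in \cite{Whi} and to compare it, relation by relation, with (CK1)--(CK4). In \cite{Whi} one works in the finitely aligned generality, so the first three relations there are literally (CK1)--(CK3), while the fourth is the Cuntz--Krieger condition phrased in terms of finite exhaustive subsets $G\subseteq v\Lambda$, namely that $\prod_{\lambda\in G}\bigl(s_v-s_\lambda s_\lambda^*\bigr)=0$. Since $\Lambda$ is here assumed row-finite and locally convex, the whole content of the lemma is that, in the presence of (CK1)--(CK3), this exhaustive-set condition is equivalent to (CK4).

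First I would collect the elementary consequences of (CK1)--(CK3). Using the usual normalisation $c(v,\lambda)=c(\lambda,s(\lambda))=1$ of a categorical $2$-cocycle, (CK2) gives $s_v s_\lambda=s_\lambda$ whenever $r(\lambda)=v$, so each $s_\lambda$ is a partial isometry with $s_\lambda^* s_\lambda=s_{s(\lambda)}$ and $s_\lambda s_\lambda^*\le s_{r(\lambda)}$. The point is that $c$ enters only through unimodular scalars, and these cancel in every product of the form $s_\mu^* s_\nu$; concretely I would establish the twisted multiplication formula
\[
s_\mu^* s_\nu=\sum_{(\alpha,\beta)\in\Lambda^{\min}(\mu,\nu)}\overline{c(\mu,\alpha)}\,c(\nu,\beta)\,s_\alpha s_\beta^*,
\]
derived exactly as in the untwisted theory by reassociating with (CK2) and reconciling the resulting scalars via the cocycle identity. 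In particular this yields mutual orthogonality of $\{s_\lambda s_\lambda^*:\lambda\in v\Lambda^{\le n}\}$.

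With these in hand I would prove the two fourth relations equivalent. For one direction, given (CK4), fix $v$ and a finite exhaustive $G\subseteq v\Lambda$ and choose $n$ with $d(\lambda)\le n$ for every $\lambda\in G$; expanding $s_v$ over $v\Lambda^{\le n}$ by (CK4) and invoking the multiplication formula, a standard combinatorial argument from the factorisation property and the exhaustiveness of $G$ shows that $\prod_{\lambda\in G}(s_v-s_\lambda s_\lambda^*)$ collapses to $0$. For the converse, row-finiteness makes $v\Lambda^{\le n}$ finite and local convexity makes it exhaustive, so applying the exhaustive-set relation to $G=v\Lambda^{\le n}$, expanding the product, and using the mutual orthogonality of the $s_\lambda s_\lambda^*$ recovers (CK4). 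This is the twisted counterpart of the untwisted equivalence in \cite{RaeSimYee}, and the main obstacle is exactly this comparison: one must verify that the unimodular scalars produced by $c$ in the multiplication formula neither destroy the orthogonality of the range projections nor obstruct the collapse of the product over $G$. Once that bookkeeping is in place, both implications follow as in the untwisted case.
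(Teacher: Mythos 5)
There is a genuine gap, and it begins with the target definition. The definition of a Cuntz--Krieger $(\Lambda,c)$-family in \cite{Whi} (equivalently \cite{SimWhiWhi}) is not ``(CK1)--(CK3) plus the exhaustive-set relation'': it contains a further axiom (TCK4), namely $s_\mu s^*_\mu s_\nu s^*_\nu=\sum_{(\alpha,\beta)\in\Lambda^{\min}(\mu,\nu)} s_{\mu\alpha} s_{\mu\alpha}^*$, which is logically independent of the first three relations and must be verified separately in the forward direction. Omitting it is not merely a bookkeeping slip, because it is exactly the relation that encodes your ``twisted multiplication formula.''

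That leads to the substantive error: you claim to derive
\[
s_\mu^* s_\nu=\sum_{(\alpha,\beta)\in\Lambda^{\min}(\mu,\nu)}\overline{c(\mu,\alpha)}\,c(\nu,\beta)\,s_\alpha s_\beta^*
\]
as an ``elementary consequence of (CK1)--(CK3)'' by reassociating with (CK2). This is false: a Toeplitz family (one vertex, one edge, $s s^*\neq s_v$) satisfies (CK1)--(CK3) but not this identity, so no amount of reassociation and cocycle bookkeeping can produce it. The formula genuinely requires a Cuntz--Krieger relation. In the direction (CK1)--(CK4) $\Rightarrow$ \cite{Whi}-family one derives it from (CK4) as in the proof of \cite[Proposition~3.5]{RaeSimYee}, and there is then a further point to check that the paper addresses: that argument naturally produces a sum over $E'=\{(\alpha,\beta):\lambda\alpha=\mu\beta,\ \lambda\alpha\in r(\lambda)\Lambda^{\le N}\}$ with $N=d(\lambda)\vee d(\mu)$, and one must verify $E'=\Lambda^{\min}(\lambda,\mu)$ before concluding (TCK4). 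In the converse direction the mutual orthogonality of $\{s_\lambda s^*_\lambda:\lambda\in v\Lambda^{\le n}\}$ that you invoke must come from the axiom (TCK4), not from a formula you have derived from (CK1)--(CK3); with (TCK4) restored as a hypothesis, your use of $G=v\Lambda^{\le n}$ (finite by row-finiteness, exhaustive by local convexity) in the relation (CK) does recover (CK4). So the overall architecture is salvageable, but as written the central lemma of your proof is unproved and unprovable from the hypotheses you allot to it.
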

\begin{proof}
Recall that $E \subseteq v\Lambda$ is \emph{exhaustive} if for every $\lambda \in
v\Lambda$ there exists $\mu \in E$ such that $\Lambda^{\min}(\lambda,\mu) \not=
\emptyset$. According to \cite{SimWhiWhi}, a Cuntz-Krieger $(\Lambda, c)$-family is a
function $s:\lambda \mapsto s_\lambda$ such that
\begin{enumerate}
\item[(TCK1)] $\{s_v : v\in \Lambda^0\}$ is a collection of mutually orthogonal
    projections;
\item[(TCK2)] $s_\mu s_\nu = c(\mu,\nu)s_{\mu\nu}$ whenever $s(\mu) = r(\nu)$;
\item[(TCK3)] $s^*_\lambda s_\lambda = s_{s(\lambda)}$ for all $\lambda\in \Lambda$;
\item[(TCK4)] $s_\mu s^*_\mu s_\nu s^*_\nu=\sum_{(\alpha,\beta)\in
    \Lambda^{\min}(\lambda,\mu)} s_{\lambda\alpha} s_{\lambda\alpha}^*$ for all
    $\mu,\nu \in \Lambda$; and
\item[(CK)] $\prod_{\lambda\in E}(s_v-s_\lambda s_\lambda^*)=0$ for all $v\in
    \Lambda^0$ and finite \emph{exhaustive} $E\subseteq v\Lambda$.
\end{enumerate}
So we must show that (CK1)--(CK4) are equivalent to (TCK1)--(CK).

First suppose that $s$ satisfies (CK1)--(CK4). Then it clearly satisfies (TCK1)--(TCK3).
For~(TCK4), fix $\lambda,\mu \in \Lambda$. It suffices to show that
$s_\lambda^*s_\mu=\sum_{(\alpha,\beta)\in \Lambda^{\min}(\lambda,\mu)}
\overline{c(\lambda,\alpha)}c(\mu,\beta) s_\alpha s_\beta^*$ (see
\cite[Lemma~3.1.5]{Whi}). Define $N:=d(\lambda)\vee d(\mu)$ and $E':=\{(\alpha,\beta):
\lambda\alpha=\mu\beta, \lambda\alpha \in r(\lambda)\Lambda^{\leq N}\}$. The proof of
\cite[Proposition 3.5]{RaeSimYee} gives $s_\lambda^*s_\mu=\sum_{(\alpha,\beta)\in
E'}\overline{c(\lambda,\alpha)}c(\mu,\beta) s_\alpha s_\beta^*$. Clearly
$\Lambda^{\min}(\lambda,\mu) \subseteq E'$ since $r(\lambda)\Lambda^N\subseteq
r(\lambda)\Lambda^{\leq N}$. Conversely for any $(\alpha,\beta)\in E'$ we have
$\lambda\alpha = \mu\beta$ by definition, and hence $d(\lambda\alpha) \ge N$. Since
$\lambda\alpha \in \Lambda^{\le N}$, we also have $d(\lambda\alpha) \le N$, and hence we
have equality. Thus $E' = \Lambda^{\min}(\lambda,\mu)$. This gives~(TCK4).

For~(CK), fix $v\in \Lambda^0$ and a finite exhaustive $E\subseteq v\Lambda$. With
$N:=\bigvee_{\lambda\in E} d(\lambda)$ and $E':=\{\lambda\nu: \lambda\in E, \nu\in
s(\lambda)\Lambda^{\leq N-d(\lambda)}\}$ we have $E'=v\Lambda^{\leq N}$ by an induction
using \cite[Lemma~3.12]{RaeSimYee}. Relation~(TCK4) implies that the $s_\lambda
s^*_\lambda$ where $\lambda \in E$ commute, and that the $s_{\lambda\nu}
s^*_{\lambda\nu}$ are mutually orthogonal. Also, (TCK2) implies that $s_v - s_\lambda
s^*_\lambda \le s_v - s_{\lambda\nu} s^*_{\lambda\nu}$ for all $\lambda\nu \in E'$, and
so
\[
\prod_{\lambda\in E}(s_v-s_\lambda s_\lambda^*)
    \leq \prod_{\mu\in E'}(s_v-s_\mu s_\mu^*)
    = s_v-\sum_{\mu\in v\Lambda^{\leq N}} s_\mu s_\mu^*
    =0.
\]

Now suppose that $s$ satisfies (TCK1)--(CK). Then it clearly satisfies (CK1)--(CK3).
For~(CK4), fix $v\in \Lambda^0$ and $1\leq i\leq k$ with $E:=v\Lambda^{e_i}\neq
\emptyset$. Then~(TCK4) implies that the $s_\lambda s^*_\lambda$ for $\lambda \in E$ are
mutually orthogonal. Since $E\subseteq v\Lambda$ is finite and exhaustive~(CK) then gives
\[
    0=\prod_{\lambda\in E}(s_v-s_\lambda s_\lambda^*)
        =s_v-\sum_{\lambda\in E}s_\lambda s_\lambda^*.\qedhere
\]
\end{proof}

The \emph{twisted $k$-graph $C^*$-algebra $C^*(\Lambda, c)$} is the universal
$C^*$-algebra generated by a Cuntz-Krieger $(\Lambda, c)$-family.

\section{Covering sequences and cohomology}\label{sec3}

We will be interested in twisted $C^*$-algebras associated to 3-graphs analogous to the
rank-2 Bratteli diagrams of \cite{PasRaeRorSim} (see Section~\ref{sec:rank3BD}). The
building blocks for these 3-graphs are covering systems of $k$-graphs. In this section we
investigate their second cohomology groups. Our results substantially simplify the
problem of studying the associated $C^*$-algebras later because it allows us to assume
the the twisting $2$-cocycles are pulled back from a cocycle of a standard form on
$\ZZ^2$  (see Remark~\ref{rmk:easycocycles}).

We briefly recap the categorical cohomology of $k$-graphs (see \cite{KumPasSim2}). Let
$\Lambda$ be a $k$-graph, and let $A$ be an abelian group. For each integer $r \geq 1$,
let $\Lambda^{*r}:=\{(\lambda_1, \dots, \lambda_r)\in \prod_{i=1}^r \Lambda :
s(\lambda_i)=r(\lambda_{i+1}) \text{ for each }i\}$ be the collection of
\emph{composable} $r$-tuples in $\Lambda$, and let $\Lambda^{*0}:=\Lambda^0$. For $r \geq
1$, a function $f: \Lambda^{*r}\to A$ is an \emph{$r$-cochain} if $f(\lambda_1, \dots ,
\lambda_r) = 0$ whenever $\lambda_i \in \Lambda^0$ for some $i \leq r$. A $0$-cochain is
any function $f\colon\Lambda^0 \to A$. We write $C^r(\Lambda,A)$ for the group of all
$r$-cochains under pointwise addition. Define maps $\delta^r\colon C^r(\Lambda, A) \to
C^{r+1}(\Lambda, A)$ by $\delta^0(f)(\lambda) = f(s(\lambda)) - f(r(\lambda))$ and
\begin{align*}
\delta^r(f)(\lambda_0, \dots, \lambda_r)
    &= f(\lambda_1, \dots, \lambda_r) \\
    &\qquad + \sum^r_{i=1} (-1)^i f(\lambda_0, \dots, \lambda_{i-2}, \lambda_{i-1}\lambda_i,
        \lambda_{i+1}, \dots, \lambda_r) \\
    &\qquad\qquad + (-1)^{r+1} f(\lambda_0, \dots, \lambda_{r-1})\quad\text{ for $r \ge 1$.}
\end{align*}
Let $B^r(\Lambda, A) := \operatorname{im}(\delta^{r-1})$ and $Z^r(\Lambda, A) =
\ker(\delta^r)$. A calculation \cite[(3.3)--(3.5)]{KumPasSim2} shows that $B^r(\Lambda,
A) \subseteq Z^r(\Lambda, A)$. We define $H^r(\Lambda, A) := Z^r(\Lambda, A)/B^r(\Lambda,
A)$. We call the elements of $B^r(\Lambda, A)$ \emph{$r$-coboundaries}, and the elements
of $Z^r(\Lambda, A)$ \emph{$r$-cocycles}. A 2-cochain $c\in C^2(\Lambda,A)$ is a
2-cocycle if and only if it satisfies the {\em cocycle identity}
$c(\lambda,\mu)+c(\lambda\mu,\nu)=c(\mu,\nu)+c(\lambda,\mu\nu)$.

As a notational convention, if $\Gamma \subseteq \Lambda$ is a subcategory and $c \in
Z^r(\Lambda, A)$ then we write $c|_\Gamma$, rather than $c|_{\Gamma^{*r}}$ for the
restriction of $c$ to the composable $r$-tuples of $\Gamma$. If $\Lambda$ and $\Gamma$
are $k$-graphs and $\phi\colon\Lambda \to \Gamma$ is a functor, and if $c \in Z^2(\Gamma,
A)$, then $\phi_* c\colon\Lambda^{*2} \to A$ is defined by $\phi_* c(\lambda,\mu) =
c(\phi(\lambda), \phi(\mu))$.

In \cite{KumPasSim2}, the categorical cohomology groups described above were decorated
with an underline to distinguish them from the cubical cohomology groups of
\cite{KumPasSim3}. In this paper, we deal only with categorical cohomology, so we have chosen
to omit the underlines.

\begin{defn}\label{def2.2}
Let $\Lambda=\lim_n (\Lambda_n, p_n)$ be the $(k+1)$-graph associated to a covering
sequence of $k$-graphs. A sequence $(c_n)$ of cocycles $c_n\in Z^2(\Lambda_n, A)$ is
\emph{compatible} if there is a 2-cocycle $c\in Z^2(\Lambda, A)$ such that
$c|_{\Lambda_n} = c_n$ for $n\geq 1$; we say that the $c_n$ are compatible \emph{with
respect to $c$}.
\end{defn}

Let $\Lambda=\lim_n (\Lambda_n, p_n)$ be the $(k+1)$-graph associated to a covering
sequence of $k$-graphs. For each $v \in \Lambda^0$ there is a unique element $\xi_v$ of
$\Lambda_1^0 \Lambda^{\NN e_{k+1}} v$; if $v \in \Lambda_n^0$, then $r(\xi_v) =
p_{1,n}(v)$. The factorisation property implies that for each $\lambda \in \Lambda$ there
is a unique factorisation
\begin{equation}\label{eq:pi def}
\xi_{r(\lambda)}\lambda = \pi(\lambda)\beta\text{ with $\pi(\lambda) \in \Lambda_1$ and
    $\beta \in \Lambda^{\NN e_{k+1}}$.}
\end{equation}
We call the assignment $\lambda \mapsto \pi(\lambda)$ the \emph{projection} of $\Lambda$
onto $\Lambda_{1}$.

Observe that if $\lambda \in \Lambda^n$, then $\pi(\lambda) = p_{1,n}(\lambda)$, and if
$\lambda \in \Lambda^{\NN e_{k+1}}$ with $r(\lambda) \in \Lambda_n^0$, then $\pi(\lambda)
= p_{1,n}(r(\lambda))$. In particular, $\pi(\xi_v) = r(\xi_v) = p_{1,n}(v)$ for $v \in
\Lambda_n^0$. In general, if $r(\lambda) \in \Lambda_n^0$, then we can factorise $\lambda
= \lambda'\lambda''$ with $\lambda' \in \Lambda_n$ and $\lambda'' \in \Lambda^{\NN
e_{k+1}}$, and then $\pi(\lambda) = p_{1,n}(\lambda')$.

\begin{lemma}\label{lem3.4}
Let $\Lambda = \lim_n(\Lambda_n, p_n)$ be the $(k+1)$-graph associated to a covering
sequence of $k$-graphs. The projection $\pi$ of $\Lambda$ onto $\Lambda_{1}$
of~\eqref{eq:pi def} is a functor. The formula $\pi_*c(\lambda_1, \lambda_2) =
c(\pi(\lambda_1), \pi(\lambda_2))$ for $(\lambda_1, \lambda_2) \in \Lambda^{*2}$
determines a homomorphism $\pi_* \colon Z^2(\Lambda_1,A) \to Z^2(\Lambda,A)$.
\end{lemma}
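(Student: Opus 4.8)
The plan is to reduce everything to a single clean identity, namely that for every $\lambda\in\Lambda$ one has $\xi_{r(\lambda)}\lambda=\pi(\lambda)\,\xi_{s(\lambda)}$; once this is available, both the functoriality of $\pi$ and the cocycle property of $\pi_*c$ follow formally. To establish the identity I start from the defining factorisation \eqref{eq:pi def}, $\xi_{r(\lambda)}\lambda=\pi(\lambda)\beta$ with $\pi(\lambda)\in\Lambda_1$ and $\beta\in\Lambda^{\NN e_{k+1}}$, and show that $\beta=\xi_{s(\lambda)}$. Indeed $s(\beta)=s(\xi_{r(\lambda)}\lambda)=s(\lambda)$, while $r(\beta)=s(\pi(\lambda))\in\Lambda_1^0$ because $\pi(\lambda)\in\Lambda_1$; hence $\beta$ lies in $\Lambda_1^0\Lambda^{\NN e_{k+1}}s(\lambda)$, and the uniqueness of the element $\xi_{s(\lambda)}$ of this set forces $\beta=\xi_{s(\lambda)}$. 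Specialising to a vertex $v$, where $d(\pi(v))$ has vanishing first $k$ coordinates and vanishing last coordinate, so that $\pi(v)$ is itself a vertex, the identity $\xi_v=\pi(v)\xi_v$ recovers $\pi(v)=r(\xi_v)$; I will use this repeatedly.

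Functoriality is then immediate. Taking ranges and sources in $\xi_{r(\lambda)}\lambda=\pi(\lambda)\xi_{s(\lambda)}$ gives $r(\pi(\lambda))=r(\xi_{r(\lambda)})=\pi(r(\lambda))$ and $s(\pi(\lambda))=r(\xi_{s(\lambda)})=\pi(s(\lambda))$, so $\pi$ preserves sources, ranges and (by the vertex computation) objects. For composition, suppose $s(\lambda)=r(\mu)$, so that $\xi_{s(\lambda)}=\xi_{r(\mu)}$; applying the identity twice yields
\[
\xi_{r(\lambda\mu)}\lambda\mu=\pi(\lambda)\xi_{s(\lambda)}\mu=\pi(\lambda)\pi(\mu)\xi_{s(\mu)},
\]
whereas the identity applied directly to $\lambda\mu$ gives $\xi_{r(\lambda\mu)}\lambda\mu=\pi(\lambda\mu)\xi_{s(\mu)}$. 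Since $\pi(\lambda)\pi(\mu)$ and $\pi(\lambda\mu)$ both lie in $\Lambda_1$ (last degree coordinate zero) and agree on the first $k$ degree coordinates, hence have equal degree, the uniqueness in the factorisation property lets me cancel $\xi_{s(\mu)}$ on the right and conclude $\pi(\lambda\mu)=\pi(\lambda)\pi(\mu)$. This cancellation is the one genuinely delicate point, and it is exactly where the factorisation property of the $(k+1)$-graph does the work.

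For the second assertion I first check that $\pi_*c$ is a $2$-cochain on $\Lambda$: if some $\lambda_i\in\Lambda^0$ then $\pi(\lambda_i)\in\Lambda_1^0$, so $c(\pi(\lambda_1),\pi(\lambda_2))=0$ as $c$ is a cochain, and composability of $(\lambda_1,\lambda_2)$ forces composability of $(\pi(\lambda_1),\pi(\lambda_2))$ because $\pi$ preserves sources and ranges, so the right-hand side is well defined. The cocycle identity for $\pi_*c$ on a composable triple $(\lambda,\mu,\nu)$ reduces, using $\pi(\lambda\mu)=\pi(\lambda)\pi(\mu)$ and $\pi(\mu\nu)=\pi(\mu)\pi(\nu)$, to the cocycle identity for $c$ at the composable triple $(\pi(\lambda),\pi(\mu),\pi(\nu))$ in $\Lambda_1$, which holds since $c\in Z^2(\Lambda_1,A)$; hence $\pi_*c\in Z^2(\Lambda,A)$. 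Finally $\pi_*$ is a homomorphism because cochain addition is pointwise: $\pi_*(c+c')(\lambda_1,\lambda_2)=(c+c')(\pi(\lambda_1),\pi(\lambda_2))=\pi_*c(\lambda_1,\lambda_2)+\pi_*c'(\lambda_1,\lambda_2)$. The only real obstacle in the whole argument is the cancellation step used to prove $\pi(\lambda\mu)=\pi(\lambda)\pi(\mu)$; everything else is bookkeeping with the defining properties of $\xi$ and $\pi$.
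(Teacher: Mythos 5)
Your proof is correct, and it is organised somewhat differently from the paper's. The paper proves functoriality by factorising $\lambda=\nu\alpha$ and $\mu=\beta\gamma$ into horizontal parts $\nu\in\Lambda_n$, $\gamma\in\Lambda_m$ and vertical parts $\alpha,\beta\in\Lambda^{\NN e_{k+1}}$, and then chases a diagram using the covering relations to identify $\pi(\lambda\mu)$ with $p_{1,n}(\nu)p_{1,m}(\gamma)=\pi(\lambda)\pi(\mu)$; the cocycle and homomorphism assertions are then dispatched in two sentences. You instead isolate the single intertwining identity $\xi_{r(\lambda)}\lambda=\pi(\lambda)\xi_{s(\lambda)}$ --- justified correctly by observing that the remainder $\beta$ in \eqref{eq:pi def} lies in $\Lambda_1^0\Lambda^{\NN e_{k+1}}s(\lambda)$ and so must equal $\xi_{s(\lambda)}$ by uniqueness --- and derive everything formally from it, never invoking the maps $p_{1,n}$ or the explicit covering relation~(5). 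What your route buys is that the one genuinely delicate step, cancelling $\xi_{s(\mu)}$ from $\pi(\lambda\mu)\xi_{s(\mu)}=\pi(\lambda)\pi(\mu)\xi_{s(\mu)}$, is made fully explicit: both candidate factors have degree equal to the horizontal part of $d(\lambda)+d(\mu)$, so the uniqueness clause of the factorisation property applies. The paper leaves this degree bookkeeping implicit in its diagram. Your treatment of the second assertion (cochain condition via $\pi(\Lambda^0)\subseteq\Lambda_1^0$, cocycle identity via functoriality, homomorphism via pointwise addition) matches the paper's intent exactly, just spelled out in more detail.
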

\begin{proof}
Fix $(\lambda,\mu)\in \Lambda^{*2}$ and factorise $\lambda=\nu\alpha$ and
$\mu=\beta\gamma$ where $\nu\in\Lambda_n$, $\gamma\in\Lambda_m$, and $\alpha,\beta\in
\Lambda^{\NN e_{k+1}}$. The factorisation property gives $\pi(\lambda\mu) =
p_{1,n}(\nu)p_{1,m}(\gamma) = \pi(\lambda)\pi(\mu)$:
\[
\begin{tikzpicture}[scale=2.0]
    \node[inner sep=0.5pt, circle] (11) at (0,0) {$.$};
    \node[inner sep=0.5pt, circle] (12) at (1,0) {$.$};
    \node[inner sep=0.5pt, circle] (13) at (2,0) {$.$};
    \node[inner sep=0.5pt, circle] (14) at (3,0) {$.$};
    \node[inner sep=0.5pt, circle] (15) at (0,-1) {$.$};
    \node[inner sep=0.5pt, circle] (21) at (0,1) {$.$};
    \node[inner sep=0.5pt, circle] (22) at (1,1) {$.$};
    \node[inner sep=0.5pt, circle] (23) at (2,1) {$.$};
    \node[inner sep=0.5pt, circle] (24) at (3,1) {$.$};
    \node[inner sep=0.5pt, circle] (25) at (1,-1) {$.$};
    \draw[-latex, dashed, green!50!black] (12)--(11);
    \draw[-latex, dashed, green!50!black] (13)--(12) node[pos=0.5, below, black] {$\alpha$};
    \draw[-latex, green!50!black, dashed] (14)--(13) node[pos=0.5, below, black] {$\beta$};
    \draw[-latex, green!50!black, dashed] (25)--(15) node[pos=0.5, below, black] {$\delta$};
    \draw[-latex, green!50!black, dashed] (22)--(21);
    \draw[-latex, green!50!black, dashed] (23)--(22);
    \draw[-latex, green!50!black, dashed] (24)--(23);
    \draw[-latex, blue] (21)--(11) node[pos=0.5, left, black] {$p_{1,m} (\gamma )$};
    \draw[-latex, blue] (22)--(12);
    \draw[-latex, blue] (23)--(13);
    \draw[-latex, blue] (24)--(14) node[pos=0.5, left, black] {$\gamma$};
    \draw[-latex, blue] (12)--(25) node[pos=0.5, left, black] {$\nu$};
	\draw[-latex, blue] (11)--(15) node[pos=0.5, left, black] {$p_{1,n} ( \nu )$};
\end{tikzpicture}
\]
Hence $\pi$ is a functor.

Since functors send identity morphisms to identity morphisms, it follows immediately that
$\pi_*c$ is a 2-cocycle. Since the operations in the cohomology groups are pointwise,
$\pi_*$ is a homomorphism.
\end{proof}

Using the covering maps between the $k$-graphs in a covering sequence we can build a
compatible sequence of 2-cocycles from a 2-cocycle on $\Lambda_1$.

\begin{thm}\label{thm2.1}
Let $\Lambda=\lim_n (\Lambda_n, p_n)$ be the $(k+1)$-graph associated to a covering
sequence of $k$-graphs, and let $c\in Z^2(\Lambda_1, A)$. Let $c_n := (p_{1,n})_* c$ for
$n \ge 1$. Then each $c_n \in Z^2(\Lambda_n, A)$, and the $c_n$ are compatible with
respect to $\overline{c} := \pi_*c$.
\end{thm}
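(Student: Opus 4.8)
The plan is to verify separately the three assertions packaged in the statement: that each $c_n$ is a $2$-cocycle on $\Lambda_n$, that $\overline{c} = \pi_* c$ is a $2$-cocycle on $\Lambda$, and that $\overline{c}|_{\Lambda_n} = c_n$ for every $n \ge 1$. The middle assertion is immediate from Lemma~\ref{lem3.4}, since $\overline{c}$ is by definition $\pi_* c$ with $c \in Z^2(\Lambda_1, A)$; so the substance lies in the first and third assertions.

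For the first assertion, I would note that $p_{1,n} = p_1 \circ \cdots \circ p_{n-1}$ is a functor $\Lambda_n \to \Lambda_1$, being a composite of covering maps, each of which is a degree-preserving functor. The underlying principle — established for $\pi$ in the proof of Lemma~\ref{lem3.4}, but valid for any functor $\phi$ — is that $\phi_*$ carries cocycles to cocycles: if $\phi$ is a functor then $\phi(\lambda\mu) = \phi(\lambda)\phi(\mu)$ and $\phi$ fixes identity morphisms, so substituting $\phi(\lambda), \phi(\mu), \phi(\nu)$ into the cocycle identity $c(\lambda,\mu)+c(\lambda\mu,\nu) = c(\mu,\nu)+c(\lambda,\mu\nu)$ for $c$ yields the same identity for $\phi_* c$, while $\phi_* c$ vanishes on any pair with a vertex entry because $\phi$ sends vertices to vertices. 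Applying this with $\phi = p_{1,n}$ gives $c_n = (p_{1,n})_* c \in Z^2(\Lambda_n, A)$.

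The third assertion is the heart of the matter, and its key input is the identity $\pi|_{\Lambda_n} = p_{1,n}$ recorded in the discussion following~\eqref{eq:pi def}: for $\lambda \in \Lambda_n \subseteq \Lambda$ one has $\pi(\lambda) = p_{1,n}(\lambda)$. This in turn rests on the commuting-square relation~(5) in the construction of $\Lambda$, which lets one slide $\xi_{r(\lambda)}$ past $\lambda$ to obtain the factorisation $\xi_{r(\lambda)}\lambda = p_{1,n}(\lambda)\,\xi_{s(\lambda)}$ with $\xi_{s(\lambda)} \in \Lambda^{\NN e_{k+1}}$; uniqueness in~\eqref{eq:pi def} then forces $\pi(\lambda) = p_{1,n}(\lambda)$. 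Granting this, the verification is a one-line computation: for a composable pair $(\lambda, \mu) \in \Lambda_n^{*2}$,
\[
\overline{c}(\lambda, \mu) = c\bigl(\pi(\lambda), \pi(\mu)\bigr) = c\bigl(p_{1,n}(\lambda), p_{1,n}(\mu)\bigr) = c_n(\lambda, \mu),
\]
so $\overline{c}|_{\Lambda_n} = c_n$, which is precisely the compatibility demanded by Definition~\ref{def2.2}.

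The step I expect to demand the most care is the third one — specifically the identity $\pi|_{\Lambda_n} = p_{1,n}$ — because it is where the inductive-limit structure of $\Lambda$, and in particular relation~(5), genuinely enters; everything else is formal, the first assertion being functoriality of pullback and the second a direct citation. Since the required identity is already available from the preamble to Lemma~\ref{lem3.4}, the remaining argument collapses to the displayed equalities above.
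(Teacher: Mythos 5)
Your proposal is correct and follows essentially the same route as the paper: both rest on the identity $\pi|_{\Lambda_n} = p_{1,n}$ (obtained by repeated use of property~(5)), from which $\overline{c}|_{\Lambda_n} = (\pi|_{\Lambda_n})_*c = c_n$ follows, with Lemma~\ref{lem3.4} supplying that $\overline{c}$ is a cocycle. The only (inessential) difference is that the paper deduces $c_n \in Z^2(\Lambda_n, A)$ by observing that $c_n$ is the restriction of the cocycle $\overline{c}$ to the subcategory $\Lambda_n$, whereas you verify it directly from the general fact that pullback along a functor carries cocycles to cocycles; both are immediate.
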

\begin{proof}
For $\lambda\in \Lambda_n$, repeated use of property~(5) of $\Lambda$ shows that
$\pi(\lambda)=p_{1,n}(\lambda)$. Hence
$\overline{c}|_{\Lambda_n}=(\pi_{|_{\Lambda_n}})_*c = c_n$ for $n\geq 1$.
Lemma~\ref{lem3.4} implies that $\overline{c}$ is a 2-cocycle on $\Lambda$. Since the
restriction of a 2-cocycle is again a 2-cocycle it follows that
$c_n=\overline{c}|_{\Lambda_n}$ is a 2-cocycle on $\Lambda_n$.
\end{proof}

Theorem~\ref{thm2.1} provides a map $c\mapsto \overline{c}$ from 2-cocycles on
$\Lambda_1$ to 2-cocycles on $\Lambda$. It turns out that this is essentially the only
way to construct 2-cocycles on $\Lambda$ (we will make this more precise in
Theorem~\ref{cor2.6}).

\begin{lemma}\label{cor2.5}
Let $\Lambda=\lim_n (\Lambda_n, p_n)$ be the $(k+1)$-graph associated to a
covering sequence of $k$-graphs, and let $c\in Z^2(\Lambda_1, A)$. There exists a unique
2-cocycle $c'$ on $\Lambda$ extending $c$ such that
\begin{equation}
\label{eqn3.1}
c'(\lambda,\mu)= 0\quad\text{ whenever $\lambda \in\Lambda^{\NN e_{k+1}}$ or $\mu \in\Lambda^{\NN e_{k+1}}$.}
\end{equation}
\end{lemma}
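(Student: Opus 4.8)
The plan is to prove existence by exhibiting an explicit cocycle and uniqueness by a reduction argument driven by the cocycle identity together with~\eqref{eqn3.1}.

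For existence, I would simply take $c' := \overline{c} = \pi_* c$, the cocycle produced in Theorem~\ref{thm2.1}. Lemma~\ref{lem3.4} already guarantees $\overline{c} \in Z^2(\Lambda, A)$, and Theorem~\ref{thm2.1} applied with $n = 1$ (so that $p_{1,1} = \id$) gives $\overline{c}|_{\Lambda_1} = c$, so $\overline{c}$ extends $c$. It remains to verify~\eqref{eqn3.1}. The crucial observation is that $\pi$ collapses every vertical path to a vertex: as recorded just before Lemma~\ref{lem3.4}, if $\lambda \in \Lambda^{\NN e_{k+1}}$ with $r(\lambda) \in \Lambda_n^0$ then $\pi(\lambda) = p_{1,n}(r(\lambda)) \in \Lambda_1^0$. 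Since a $2$-cochain vanishes whenever one of its arguments is a vertex, we get $\overline{c}(\lambda,\mu) = c(\pi(\lambda),\pi(\mu)) = 0$ as soon as $\lambda$ or $\mu$ lies in $\Lambda^{\NN e_{k+1}}$. Thus $c' = \overline{c}$ meets all the requirements.

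For uniqueness, suppose $c'$ is any $2$-cocycle extending $c$ and satisfying~\eqref{eqn3.1}; I will show its value on an arbitrary composable pair is forced. Throughout I use the factorisation of each $\lambda \in \Lambda$ as $\lambda = h_\lambda v_\lambda$ with $d(h_\lambda)_{k+1} = 0$ (so $h_\lambda$ lies in a single $\Lambda_n$ by property~(3)) and $v_\lambda \in \Lambda^{\NN e_{k+1}}$. The first step reduces an arbitrary composable pair $(\lambda,\mu)$ to a pair of horizontal morphisms. Applying the cocycle identity to the triple $(\lambda, h_\mu, v_\mu)$ and using~\eqref{eqn3.1} gives $c'(\lambda,\mu) = c'(\lambda, h_\mu)$; applying it to $(h_\lambda, v_\lambda, h_\mu)$ gives $c'(\lambda, h_\mu) = c'(h_\lambda, v_\lambda h_\mu)$; and factoring $v_\lambda h_\mu = \tilde h \tilde v$ into horizontal and vertical parts and reducing the second argument once more yields $c'(\lambda,\mu) = c'(h_\lambda, \tilde h)$. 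Because horizontal paths preserve levels, both $h_\lambda$ and $\tilde h$ lie in the single graph $\Lambda_n$, where $n$ is the level of $r(\lambda)$.

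The second step reduces a composable horizontal pair $h_1, h_2 \in \Lambda_{n+1}$ to $p_n(h_1), p_n(h_2) \in \Lambda_n$. Here property~(5), in the form $e(r(h))\,h = p_n(h)\,e(s(h))$, is the essential input: setting $w := e(r(h_1))$ and applying the cocycle identity successively to $(w, h_1, h_2)$, then $(p_n(h_1), e(s(h_1)), h_2)$, then $(p_n(h_1), p_n(h_2), e(s(h_2)))$, and discarding each vertical-argument term via~\eqref{eqn3.1}, collapses the chain of equalities to $c'(h_1, h_2) = c'(p_n(h_1), p_n(h_2))$. Iterating down the tower gives $c'(h_1,h_2) = c(p_{1,n}(h_1), p_{1,n}(h_2))$, since $c'|_{\Lambda_1} = c$. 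Combined with the first step, every value of $c'$ is pinned down by $c$, which proves uniqueness and, a posteriori, identifies $c'$ with $\overline{c}$. I expect the main obstacle to be the bookkeeping in these two reduction steps: one must order the cocycle-identity triples so that at each stage exactly the unwanted term carries a vertical argument and is annihilated by~\eqref{eqn3.1}, while keeping careful track of which $\Lambda_n$ each horizontal factor inhabits (using that $e(\cdot)$ lowers the level by one). The structural relation~(5) is what makes the second step possible, trading a level-$(n+1)$ horizontal pair for a level-$n$ one without disturbing the cocycle value.
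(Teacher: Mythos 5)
Your proposal is correct and follows essentially the same route as the paper: existence via $c' = \pi_*c$ from Theorem~\ref{thm2.1}, and uniqueness by first using the cocycle identity together with~\eqref{eqn3.1} to strip off vertical factors and reduce to a composable pair in a single $\Lambda_n$, then using property~(5) to descend level by level to $\Lambda_1$ where $c'$ agrees with $c$. The only cosmetic difference is that you factor each morphism as horizontal-then-vertical (forcing one extra re-factorisation of $v_\lambda h_\mu$), whereas the paper's Equation~\eqref{eqn3.2} uses the normal form $\alpha\lambda'$, $\mu'\beta$ directly; this does not affect the argument.
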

\begin{proof}
Theorem~\ref{thm2.1} implies that $\overline{c}$ satisfies $\overline{c}|_{\Lambda_1} =
c$, and since $\pi(\lambda) \in \Lambda_1^0$ whenever $\lambda \in \Lambda^{\NN
e_{k+1}}$,
\[
\overline{c}(\lambda,\mu)
    = c(\pi(\lambda),\pi(\mu))=0 \quad\text{ whenever $\lambda \in\Lambda^{\NN e_{k+1}}$ or
        $\mu \in\Lambda^{\NN e_{k+1}}$.}
\]

Now suppose that $c' \in Z^2(\Lambda, A)$ satisfies~\eqref{eqn3.1}.

We claim first that
\begin{equation}
\label{eqn3.2}
c'(\alpha\lambda,\mu\beta)=c'(\lambda,\mu)
\end{equation}
whenever $\alpha,\beta \in \Lambda^{\NN
e_{k+1}}$ and $\lambda,\mu \in \Lambda_n$. To see this,
observe that~\eqref{eqn3.1} gives
\[
c'(\alpha,\lambda) = 0,\quad  c'(\alpha,\lambda\mu\beta) = 0, \quad  c'(\mu,\beta) = 0\quad\text{and}\quad
c'(\lambda\mu,\beta) = 0.
\]
Repeated application of the cocycle identity gives
\begin{align*}
c'(\alpha\lambda,\mu\beta)
    &= c'(\alpha\lambda,\mu\beta) + c'(\alpha,\lambda) + c'(\mu,\beta) \\
    &= c'(\alpha,\lambda\mu\beta) + c'(\lambda,\mu\beta) + c'(\mu,\beta)
    = c'(\lambda,\mu) + c'(\lambda\mu,\beta)
    = c'(\lambda,\mu).
\end{align*}

We now claim that
\begin{equation}
\label{eqn3.3}
c'(\lambda,\mu)=c'(p_n(\lambda),p_n(\mu)),
\end{equation}
for composable $\lambda, \mu\in \Lambda_{n+1}$. To see this, use property~(5) of $\Lambda$ to find
$\alpha,\beta,\gamma$ in $\Lambda^{e_{k+1}}$ such that $\alpha\lambda =
p_n(\lambda)\beta$ and $\beta\mu=p_n(\mu)\gamma$:
\[
\begin{tikzpicture}[scale=2.0]
    \node[inner sep=0.5pt, circle] (11) at (0,0) {$.$};
    \node[inner sep=0.5pt, circle] (12) at (1,0) {$.$};
    \node[inner sep=0.5pt, circle] (15) at (0,-1) {$.$};
    \node[inner sep=0.5pt, circle] (21) at (0,1) {$.$};
    \node[inner sep=0.5pt, circle] (22) at (1,1) {$.$};
    \node[inner sep=0.5pt, circle] (25) at (1,-1) {$.$};
    \draw[-latex, dashed, green!50!black] (12)--(11) node[pos=0.5, below, black] {$\beta$};
    \draw[-latex, green!50!black, dashed] (25)--(15) node[pos=0.5, below, black] {$\alpha$};
    \draw[-latex, green!50!black, dashed] (22)--(21) node[pos=0.5, below, black] {$\gamma$};
    \draw[-latex, blue] (21)--(11) node[pos=0.5, left, black] {$p_{n} (\mu )$};
    \draw[-latex, blue] (22)--(12) node[pos=0.5, left, black] {$\mu$};
    \draw[-latex, blue] (12)--(25) node[pos=0.5, left, black] {$\lambda$};
	\draw[-latex, blue] (11)--(15) node[pos=0.5, left, black] {$p_{n} ( \lambda )$};
\end{tikzpicture}
\]
Then
\[
c'(p_n(\lambda)\beta,\mu) + c'(p_n(\lambda),\beta)
    = c'(p_n(\lambda),\beta\mu) + c'(\beta,\mu),
\]
and so~\eqref{eqn3.1} gives $c'(p_n(\lambda)\beta,\mu) = c'( c'(p_n(\lambda),\beta\mu)$.
Now~\eqref{eqn3.2} shows that
\begin{align*}
c'(p_n(\lambda),p_n(\mu))
    &= c'(p_n(\lambda),p_n(\mu)\gamma)
    = c'(p_n(\lambda),\beta\mu)\\
    &= c'(p_n(\lambda)\beta,\mu)
    = c'(\alpha\lambda,\mu)
    = c'(\lambda,\mu).
\end{align*}

We now show that $c'=\overline{c}$. We have seen that both $c'$ and $\overline{c}$
satisfy~\eqref{eqn3.1}, and so they both satisfy~\eqref{eqn3.2}. It therefore suffices to
show that $c'|_{\Lambda_n} = \overline{c}|_{\Lambda_n}$ for each $n$. Fix composable
$\lambda,\mu \in \Lambda^n$. We have $\overline{c}(\lambda,\mu) = c(p_{1,n}(\lambda),
p_{1,n}(\mu))$ by definition. Repeated applications of~\eqref{eqn3.3} give
$c'(\lambda,\mu) = c'(p_{1,n}(\lambda), p_{1,n}(\mu))$. Since $c'$ extends $c$, we deduce
that $c'(\lambda,\mu) = \overline{c}(\lambda,\mu)$.
\end{proof}

\begin{lemma}\label{thm2.5}
Let $\Lambda=\lim_n (\Lambda_n, p_n)$ be the $(k+1)$-graph associated to a covering
sequence of $k$-graphs, and let $c\in Z^2(\Lambda, A)$. Let $\overline{c|_{\Lambda_1}} =
\pi_* (c|_{\Lambda_1})$ as in Theorem~\ref{thm2.1}. Then there exists $b\in
C^1(\Lambda,A)$ such that
\[
c-\delta^1b = \overline{c|_{\Lambda_1}}.
\]
\end{lemma}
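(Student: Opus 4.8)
The plan is to produce an explicit $1$-cochain $b \in C^1(\Lambda, A)$ for which $c - \delta^1 b = \overline{c|_{\Lambda_1}}$, so that $c$ and $\overline{c|_{\Lambda_1}}$ are cohomologous. The geometric ingredient is the family of vertical paths $\xi_v \in \Lambda_1^0\Lambda^{\NN e_{k+1}}v$: combining the factorisation \eqref{eq:pi def} with uniqueness of factorisations, one sees that the vertical part of $\xi_{r(\lambda)}\lambda$ is exactly $\xi_{s(\lambda)}$, giving the ``homotopy'' relation $\xi_{r(\lambda)}\lambda = \pi(\lambda)\xi_{s(\lambda)}$ for every $\lambda \in \Lambda$. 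Intuitively the $\xi_v$ implement a natural transformation from $\id_\Lambda$ to $\iota_1\circ\pi$, and $b$ will be the associated transgression term.

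First I would define $b$ by $b(\lambda) = c(\xi_{r(\lambda)}, \lambda) - c(\pi(\lambda), \xi_{s(\lambda)})$ and check that it is a genuine $1$-cochain, i.e. $b(v) = 0$ for $v \in \Lambda^0$. This is immediate: $\pi(v) = r(\xi_v) \in \Lambda_1^0$ is a vertex, so both $c(\xi_v, v)$ and $c(\pi(v), \xi_v)$ vanish because $c$ is a cochain. The composability needed to make sense of $b(\lambda)$ follows from $s(\xi_{r(\lambda)}) = r(\lambda)$ and, via the homotopy relation, $s(\pi(\lambda)) = r(\xi_{s(\lambda)})$. I would also note that $b|_{\Lambda_1} = 0$, since for $\lambda \in \Lambda_1$ we have $\xi_{r(\lambda)} = r(\lambda)$, $\xi_{s(\lambda)} = s(\lambda)$ and $\pi(\lambda) = \lambda$.

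Next I would expand $\delta^1 b(\lambda, \mu) = b(\lambda) + b(\mu) - b(\lambda\mu)$ for composable $(\lambda, \mu)$, using $r(\mu) = s(\lambda)$, $r(\lambda\mu) = r(\lambda)$, $s(\lambda\mu) = s(\mu)$, and $\pi(\lambda\mu) = \pi(\lambda)\pi(\mu)$ from Lemma~\ref{lem3.4}. The heart of the computation is three applications of the cocycle identity, to the composable triples $(\xi_{r(\lambda)}, \lambda, \mu)$, then $(\pi(\lambda), \xi_{s(\lambda)}, \mu)$, and finally $(\pi(\lambda), \pi(\mu), \xi_{s(\mu)})$; in the first two I would rewrite the vertical composites using the homotopy relation in the forms $\xi_{r(\lambda)}\lambda = \pi(\lambda)\xi_{s(\lambda)}$ and $\xi_{s(\lambda)}\mu = \xi_{r(\mu)}\mu = \pi(\mu)\xi_{s(\mu)}$. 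Substituting the three resulting identities causes all the mixed terms of the form $c(\,\cdot\,, \xi_\bullet)$ to cancel telescopically, leaving precisely $\delta^1 b(\lambda, \mu) = c(\lambda, \mu) - c(\pi(\lambda), \pi(\mu))$, which is the desired equality $c - \delta^1 b = \overline{c|_{\Lambda_1}}$.

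I expect the only real obstacle to be bookkeeping: correctly identifying each vertical factor (that the vertical part of $\xi_{r(\lambda)}\lambda$ is $\xi_{s(\lambda)}$, and of $\xi_{r(\mu)}\mu$ is $\xi_{s(\mu)}$) by uniqueness of factorisations, and arranging the three cocycle identities so the six mixed terms cancel. A conceptual alternative would be to choose $b$ so that $c' := c - \delta^1 b$ satisfies the vanishing condition \eqref{eqn3.1} and restricts to $c|_{\Lambda_1}$ on $\Lambda_1$, and then invoke the uniqueness clause of Lemma~\ref{cor2.5} to conclude $c' = \overline{c|_{\Lambda_1}}$ with no further calculation; the trade-off is that verifying \eqref{eqn3.1} for such a $b$ is a cocycle-identity computation of comparable length to the direct approach.
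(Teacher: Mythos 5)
Your proposal is correct, and it uses exactly the cochain the paper uses, $b(\lambda) = c(\xi_{r(\lambda)},\lambda) - c(\pi(\lambda),\xi_{s(\lambda)})$; but your verification takes a genuinely different route. The key observation you isolate --- that uniqueness of $\xi_{s(\lambda)}$ in $\Lambda_1^0\Lambda^{\NN e_{k+1}}s(\lambda)$ forces $\xi_{r(\lambda)}\lambda = \pi(\lambda)\xi_{s(\lambda)}$ for \emph{every} $\lambda$ --- lets you compute $\delta^1 b(\lambda,\mu) = c(\lambda,\mu) - c(\pi(\lambda),\pi(\mu))$ in one uniform telescoping calculation: writing $\delta^1b(\lambda,\mu)$ as the three differences $c(\xi_{r(\lambda)},\lambda)-c(\xi_{r(\lambda)},\lambda\mu)$, $c(\xi_{s(\lambda)},\mu)-c(\pi(\lambda),\xi_{s(\lambda)})$ and $c(\pi(\lambda)\pi(\mu),\xi_{s(\mu)})-c(\pi(\mu),\xi_{s(\mu)})$, and applying the cocycle identity to the triples $(\xi_{r(\lambda)},\lambda,\mu)$, $(\pi(\lambda),\xi_{s(\lambda)},\mu)$ and $(\pi(\lambda),\pi(\mu),\xi_{s(\mu)})$, does give the claimed cancellation (I have checked the bookkeeping). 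The paper instead follows your ``conceptual alternative'': it notes that $c' := c - \delta^1 b$ extends $c|_{\Lambda_1}$ and then verifies the vanishing condition~\eqref{eqn3.1} for $c'$, splitting into the cases $\lambda \in \Lambda^{\NN e_{k+1}}$ and $\mu \in \Lambda^{\NN e_{k+1}}$ and factorising the other argument into horizontal and vertical parts, before invoking the uniqueness clause of Lemma~\ref{cor2.5}. Your direct computation is shorter and does not depend on Lemma~\ref{cor2.5} at all, which is a genuine simplification; the paper's route has the mild advantage of re-using the characterisation~\eqref{eqn3.1}, which is also the form in which $\overline{c}$ is recognised elsewhere in Section~\ref{sec3}. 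Either way the proof is complete; the only point you should make fully explicit in a written version is the uniqueness argument identifying the vertical factor of $\xi_{r(\lambda)}\lambda$ with $\xi_{s(\lambda)}$, since everything else rests on it.
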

\begin{proof}
For $v\in\Lambda^0$ let $\xi_v$ be the unique element of $\Lambda_1^0 \Lambda^{\NN
e_{k+1}} v$. For $\lambda \in \Lambda$ define
\[
b(\lambda)
    = c(\xi_{r(\lambda)},\lambda) - c(\pi(\lambda),\xi_{s(\lambda)}).
\]
If $\lambda \in \Lambda^0$ then $\pi(\lambda) \in \Lambda^0$ as well, and so $b(\lambda)
= 0$. So $b\in C^1(\Lambda,A)$.

Since the restriction of the maps $b$ and $\delta^1b(\lambda,\mu) =
b(\lambda)+b(\mu)-b(\lambda\mu)$ to $\Lambda_1$ are identically zero, the cocycle
$c':=c-\delta^1b$ extends $c$. To conclude that $c'=\overline{c|_{\Lambda_1}}$ it now
suffices, by Lemma~\ref{cor2.5}, to verify that $c'$ satisfies~\eqref{eqn3.1}.

We first prove that
\begin{equation}\label{eqn3.4}
c'(\lambda, \mu)= 0\text{ whenever $\lambda \in\Lambda^{\NN e_{k+1}}$.}
\end{equation}
Fix such a composable pair $\lambda,\mu\in\Lambda$, and factorise $\mu=\eta\beta'$ with
$\eta\in\Lambda_n$ and $\beta' \in \Lambda^{\NN e_{k+1}}$. Let $l$ be the integer such
that $r(\lambda) \in \Lambda^0_l$. By property~(5) of $\Lambda$ there exist
$\lambda',\beta$ in $\Lambda^{\NN e_{k+1}}$ with $r(\lambda') \in \Lambda^0_l$ and
$r(\beta) \in \Lambda^0_1$, and $\gamma\in \Lambda_m^{d(\eta)}$ such that
$\xi_{r(\lambda)}\lambda\beta\gamma = p_{1,n}(\eta)\xi_{r(\lambda')}\lambda'\beta'$:
\[
\begin{tikzpicture}[scale=2.0]
    \node[inner sep=0.5pt, circle] (11) at (0,0) {$.$};
    \node[inner sep=0.5pt, circle] (12) at (1,0) {$.$};
    \node[inner sep=0.5pt, circle] (13) at (2,0) {$.$};
    \node[inner sep=0.5pt, circle] (14) at (3,0) {$.$};
    \node[inner sep=0.5pt, circle] (21) at (0,1) {$.$};
    \node[inner sep=0.5pt, circle] (22) at (1,1) {$.$};
    \node[inner sep=0.5pt, circle] (23) at (2,1) {$.$};
    \node[inner sep=0.5pt, circle] (24) at (3,1) {$.$};
    \draw[-latex, dashed, green!50!black] (12)--(11) node[pos=0.5, below, black] {$\xi_{r(\lambda)}$};
    \draw[-latex, dashed, green!50!black] (13)--(12) node[pos=0.5, below, black] {$\lambda$};
    \draw[-latex, green!50!black, dashed] (14)--(13) node[pos=0.5, below, black] {$\beta$};
    \draw[-latex, green!50!black, dashed] (22)--(21) node[pos=0.5, below, black] {$\xi_{r(\lambda')}$};
    \draw[-latex, green!50!black, dashed] (23)--(22) node[pos=0.5, below, black] {$\lambda'$};
    \draw[-latex, green!50!black, dashed] (24)--(23) node[pos=0.5, below, black] {$\beta'$};
    \draw[-latex, blue] (21)--(11) node[pos=0.5, left, black] {$p_{1,n} (\eta )$};
    \draw[-latex, blue] (22)--(12);
    \draw[-latex, blue] (23)--(13) node[pos=0.5, left, black] {$\eta$};
    \draw[-latex, blue] (24)--(14) node[pos=0.5, left, black] {$\gamma$};
\end{tikzpicture}
\]
We have $\pi(\mu) = \pi(\eta) = p_{1,n}(\eta) = \pi(\gamma) = p_{1,m}(\gamma)$. We
prove~\eqref{eqn3.4} in three steps.

(1) First we show that $c'(\beta,\gamma)=0$ and $c'(\lambda\beta,\gamma)=0$. The cocycle
identity gives $c(\xi_{r(\beta)}, \beta\gamma) + c(\beta,\gamma) = c(\xi_{r(\beta)}\beta,
\gamma) + c(\xi_{r(\beta)}, \beta)$. Since $\xi_{r(\beta)}\beta=\xi_{r(\gamma)}$, the
definition of $b$ gives
\begin{align*}
c'(\beta,\gamma)&=c(\beta,\gamma) - b(\beta) - b(\gamma) + b(\beta\gamma)\\
&=c(\beta,\gamma) - \Big(c(\xi_{r(\beta)},\beta) - 0\Big)
    - \Big(c(\xi_{r(\gamma)}, \gamma) - c(p_{1,n}(\gamma), \xi_{s(\gamma)})\Big) \\
    &\qquad\qquad{}
    + \Big(c(\xi_{r(\beta)}, \beta\gamma) - c(p_{1,n}(\gamma), \xi_{s(\gamma)})\Big)\\
&=\Big(c(\xi_{r(\beta)}, \beta\gamma) + c(\beta, \gamma)\Big)
    - \Big(c(\xi_{r(\beta)}\beta, \gamma) + c(\xi_{r(\beta)}, \beta)\Big) = 0.
\end{align*}
Applying this calculation to $\lambda\beta$ rather than $\beta$ gives
$c'(\lambda\beta,\gamma) = 0$ as well.

(2) Next we show that $c'(\lambda,\beta)=0$. We have $c'(\xi_{r(\lambda)}, \lambda\beta)
+ c'(\lambda,\beta) = c'(\xi_{r(\lambda)}\lambda, \beta) + c'(\xi_{r(\lambda)},
\lambda)$. Since $\xi_{r(\lambda)}\lambda = \xi_{r(\beta)}$ it follows that
\begin{align*}
c'(\lambda,\beta)
    & = c(\lambda,\beta)-b(\lambda)-b(\beta)+b(\lambda\beta)\\
    & = c(\lambda,\beta)-\Big(c(\xi_{r(\lambda)},\lambda)-0\Big)
        - \Big(c(\xi_{r(\beta)},\beta)-0\Big)
        + \Big(c(\xi_{r(\lambda)},\lambda\beta)-0\Big)\\
    & = \Big(c(\xi_{r(\lambda)},\lambda\beta) + c(\lambda,\beta)\Big)
        -\Big(c(\xi_{r(\lambda)}\lambda,\beta) + c(\xi_{r(\lambda)},\lambda)\Big) = 0.
\end{align*}

(3) Finally, to establish~\eqref{eqn3.4}, we apply the cocycle identity
$c'(\lambda\beta,\gamma) + c'(\lambda,\beta) = c'(\lambda,\beta\gamma) +
c'(\beta,\gamma)$ and steps (1)~and~(2) to see that
\[
c'(\lambda,\mu)
    = c'(\lambda,\eta\beta')
    = c'(\lambda,\beta\gamma)
    = 0.
\]

It remains to show that
\begin{equation}
\label{eqn3.5}
c'(\lambda,\mu)= 0\quad\text{ whenever $\mu\in\Lambda^{\NN e_{k+1}}$.}
\end{equation}
Fix such a composable pair $\lambda,\mu\in \Lambda$, and factorise $\lambda=\alpha\eta$
with $\alpha \in \Lambda^{\NN e_{k+1}}$ and $\eta\in \Lambda_n$. Using the factorisation
property, we obtain $\alpha' \in \Lambda^{d(\alpha)}$, $\mu' \in \Lambda^{d(\mu)}$ and
$\gamma \in \Lambda_m^{d(\eta)}$ that make the following diagram commute.
\[
\begin{tikzpicture}[scale=2.0]
    \node[inner sep=0.5pt, circle] (11) at (0,0) {$.$};
    \node[inner sep=0.5pt, circle] (12) at (1,0) {$.$};
    \node[inner sep=0.5pt, circle] (13) at (2,0) {$.$};
    \node[inner sep=0.5pt, circle] (14) at (3,0) {$.$};
    \node[inner sep=0.5pt, circle] (21) at (0,1) {$.$};
    \node[inner sep=0.5pt, circle] (22) at (1,1) {$.$};
    \node[inner sep=0.5pt, circle] (23) at (2,1) {$.$};
    \node[inner sep=0.5pt, circle] (24) at (3,1) {$.$};
    \draw[-latex, dashed, green!50!black] (12)--(11) node[pos=0.5, below, black] {$\xi_{r(\lambda)}$};
    \draw[-latex, dashed, green!50!black] (13)--(12) node[pos=0.5, below, black] {$\alpha$};
    \draw[-latex, green!50!black, dashed] (14)--(13) node[pos=0.5, below, black] {$\mu'$};
    \draw[-latex, green!50!black, dashed] (22)--(21) node[pos=0.5, below, black] {$\xi_{r(\alpha')}$};
    \draw[-latex, green!50!black, dashed] (23)--(22) node[pos=0.5, below, black] {$\alpha'$};
    \draw[-latex, green!50!black, dashed] (24)--(23) node[pos=0.5, below, black] {$\mu$};
    \draw[-latex, blue] (21)--(11) node[pos=0.5, left, black] {$p_{1,n} (\eta )$};
    \draw[-latex, blue] (22)--(12);
    \draw[-latex, blue] (23)--(13) node[pos=0.5, left, black] {$\eta$};
    \draw[-latex, blue] (24)--(14) node[pos=0.5, left, black] {$\gamma$};
\end{tikzpicture}
\]

Equation~\eqref{eqn3.4} gives $c'(\alpha,\eta)=0$ and $c'(\alpha,\eta\mu)=0$. By the
cocycle identity $c'(\alpha\eta,\mu) + c'(\alpha,\eta) = c'(\alpha,\eta\mu) +
c'(\eta,\mu)$. So we need only check that $c'(\eta,\mu) = 0$. We consider the cocycle
identity
\[
c(p_{1,m}(\gamma),\xi_{r(\mu)}\mu) + c(\xi_{r(\mu)},\mu)
    = c(p_{1,m}(\gamma)\xi_{r(\mu)},\mu) + c(p_{1,m}(\gamma),\xi_{r(\mu)}).
\]
Since $\xi_{s(\gamma)} = \xi_{r(\mu)}\mu$, we have $p_{1,n}(\eta)=p_{1,m}(\gamma)$. Since
$\xi_{s(\eta)} = \xi_{r(\mu)}$ and $p_{1,m}(\gamma)\xi_{r(\mu)} = \xi_{r(\mu')}\eta$, we
obtain
\begin{equation}
\label{eqn3.6}
c(p_{1,n}(\eta), \xi_{s(\eta)}) - c(\xi_{r(\mu)}, \mu) - c(p_{1,m}(\gamma),\xi_{s(\gamma)})
    = - c(\xi_{r(\mu')}\eta, \mu).
\end{equation}
Hence
\begin{align*}
c'(\eta,\mu)
    &= c(\eta,\mu) - b(\eta) - b(\mu) + b(\eta\mu)\\
    &= c(\eta, \mu) - b(\eta) - b(\mu) + b(\mu'\gamma)\\
    &= c(\eta, \mu)
        - \Big(c(\xi_{r(\eta)}, \eta) - c(p_{1,n}(\eta), \xi_{s(\eta)})\Big) \\
        &\qquad{} - \Big(c(\xi_{r(\mu)},\mu) - 0\Big)
            + \Big(c(\xi_{r(\mu')}, \mu'\gamma) - c(p_{1,m}(\gamma),\xi_{s(\gamma)})\Big)\\
    &= c(\eta,\mu) - c(\xi_{r(\eta)}, \eta)
        + c(\xi_{r(\mu')}, \mu'\gamma) - c(\xi_{r(\mu')}\eta, \mu)\quad\text{ by~\eqref{eqn3.6}}\\
    &= c(\eta,\mu) - c(\xi_{r(\mu')}, \eta)
        + c(\xi_{r(\mu')}, \eta\mu) - c(\xi_{r(\mu')}\eta, \mu) = 0,
\end{align*}
establishing~\eqref{eqn3.5}.
\end{proof}

\begin{thm}\label{cor2.6}
Let $\Lambda=\lim_n (\Lambda_n, p_n)$ be the $(k+1)$-graph associated to a covering
sequence of $k$-graphs. The restriction map $c\mapsto c|_{\Lambda_1}$ from
$Z^2(\Lambda,A)$ to $Z^2(\Lambda_1, A)$ induces an isomorphism $H^2(\Lambda,A)\cong
H^2(\Lambda_1,A)$.
\end{thm}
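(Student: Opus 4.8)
The plan is to show that the restriction map $R\colon c\mapsto c|_{\Lambda_1}$ and the homomorphism $\pi_*\colon Z^2(\Lambda_1,A)\to Z^2(\Lambda,A)$ of Lemma~\ref{lem3.4} descend to mutually inverse maps on cohomology. The two preceding results supply essentially all the content: Theorem~\ref{thm2.1} gives $R\circ\pi_*=\id$ at the level of cocycles, since $\overline{c}|_{\Lambda_1}=c$; and Lemma~\ref{thm2.5} says that every $c\in Z^2(\Lambda,A)$ is cohomologous to $\pi_*(c|_{\Lambda_1})$, which is precisely the statement that the induced maps compose to the identity in the other order. So once well-definedness on cohomology is checked, the theorem follows formally.

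First I would verify that $R$ descends to $H^2$. If $c=\delta^1 b$ for some $b\in C^1(\Lambda,A)$, then for composable $\lambda,\mu\in\Lambda_1$ we have $c(\lambda,\mu)=b(\lambda)+b(\mu)-b(\lambda\mu)$ with $\lambda\mu\in\Lambda_1$, so $c|_{\Lambda_1}=\delta^1(b|_{\Lambda_1})\in B^2(\Lambda_1,A)$. Hence $R$ carries $B^2(\Lambda,A)$ into $B^2(\Lambda_1,A)$ and induces a homomorphism $[R]\colon H^2(\Lambda,A)\to H^2(\Lambda_1,A)$.

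For surjectivity, given a class of $H^2(\Lambda_1,A)$ represented by $c\in Z^2(\Lambda_1,A)$, Theorem~\ref{thm2.1} shows that $\overline{c}:=\pi_*c\in Z^2(\Lambda,A)$ satisfies $\overline{c}|_{\Lambda_1}=c$, so $[R]$ sends the class of $\overline{c}$ to the class of $c$. For injectivity, I would take $c\in Z^2(\Lambda,A)$ with $c|_{\Lambda_1}\in B^2(\Lambda_1,A)$, say $c|_{\Lambda_1}=\delta^1 b_1$ with $b_1\in C^1(\Lambda_1,A)$. Lemma~\ref{thm2.5} provides $b\in C^1(\Lambda,A)$ with $c-\delta^1 b=\overline{c|_{\Lambda_1}}=\pi_*(\delta^1 b_1)$. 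Since $\pi$ is a functor by Lemma~\ref{lem3.4}, so that $\pi(\lambda)\pi(\mu)=\pi(\lambda\mu)$, and since $\pi$ sends vertices to vertices (so that $b_1\circ\pi$ vanishes on $\Lambda^0$ and lies in $C^1(\Lambda,A)$), a one-line computation gives $\pi_*(\delta^1 b_1)=\delta^1(b_1\circ\pi)$. Therefore $c=\delta^1(b+b_1\circ\pi)\in B^2(\Lambda,A)$, so its class is trivial and $[R]$ is injective.

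The genuine mathematical difficulty, namely that the cocycle data in the extra direction $e_{k+1}$ can be trivialised by a $1$-coboundary, has already been absorbed into Lemma~\ref{thm2.5} (via the explicit cochain $b(\lambda)=c(\xi_{r(\lambda)},\lambda)-c(\pi(\lambda),\xi_{s(\lambda)})$ and the normalisation established in Lemma~\ref{cor2.5}). Consequently there is no remaining obstacle at this stage; the proof is a matter of bookkeeping to confirm that both $R$ and $\pi_*$ respect coboundaries. The only substantive verification is the identity $\pi_*(\delta^1 b_1)=\delta^1(b_1\circ\pi)$, which rests entirely on the functoriality of $\pi$.
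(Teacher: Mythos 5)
Your proposal is correct and follows essentially the same route as the paper: surjectivity from Theorem~\ref{thm2.1} (equivalently Lemma~\ref{cor2.5}), and injectivity from Lemma~\ref{thm2.5} together with the identity $\pi_*(\delta^1 b_1)=\delta^1(b_1\circ\pi)$, which the paper writes as $\pi_*(\delta^1 b)=\delta^1(\pi_* b)$. The only cosmetic difference is that the paper phrases injectivity via two cocycles $c_1, c_2$ with cohomologous restrictions rather than a single cocycle whose restriction is a coboundary; these are equivalent.
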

\begin{proof}
Surjectivity follows from Lemma~\ref{cor2.5}. To verify injectivity fix 2-cocycles
$c_1,c_2$ on $\Lambda$ such that $c_1|_{\Lambda_1}-c_2|_{\Lambda_1}=\delta^1 b$ for some
$b\in C^1(\Lambda_1,A)$. Lemma~\ref{thm2.5} gives $b_1,b_2\in C^1(\Lambda,A)$ such that
\[
c_1 - \delta^1b_1 = \pi_*(c_1|_{\Lambda_1}),\qquad\text{ and }\qquad
    c_2 - \delta^1b_2 = \pi_*(c_2|_{\Lambda_1}).
\]
Hence
\begin{align*}
c_1 - c_2
    &= \pi_*(c_1|_{\Lambda_1}) + \delta^1b_1 - \pi_*(c_2|_{\Lambda_1}) - \delta^1b_2\\
    &= \delta^1b_1 - \delta^1b_2 + \pi_*(c_1|_{\Lambda_1} - c_2|_{\Lambda_1})\\
    &= \delta^1b_1 - \delta^1b_2 + \pi_*(\delta^1b).
\end{align*}
We have $\pi_*(b) \in C^1(\Lambda,A)$ and $\pi_*(\delta^1 b) = \delta^1(\pi_*b)$, so we
deduce that $c_1$ and $c_2$ are cohomologous.
\end{proof}

\begin{rmk}
Since twisted $C^*$-algebras do not ``see" a perturbation of a 2-cocycle by a coboundary
\cite[Proposition~5.6]{KumPasSim2}, Theorem~\ref{cor2.6} says that a twisted $k$-graph
$C^*$-algebra $C^*(\Lambda, c)$ associated to a cocycle $c$ on the $(k+1)$-graph
associated to covering sequence $(\Lambda_n, p_n)$ is determined by the covering sequence
and $c|_{\Lambda_1}$. One might expect that a similar statement applies to traces; we
addresses this in the next section.
\end{rmk}

\section{Bratteli diagrams of covering maps}\label{sec5}

We now consider a more complicated situation than in the preceding two sections: Rather
than a single covering system, we consider Bratteli diagrams of covering maps between $k$-graphs. Roughly
speaking this consists of a Bratteli diagram to which we associate a $k$-graph at each vertex and a covering map at each edge. In particular, each infinite path in
the diagram corresponds to a covering sequence of $k$-graphs to which we can apply the
results of the preceding two sections. We show how to construct a $(k+1)$-graph $\Lambda$
from a Bratteli diagram of covering maps, and how to view a full corner of a twisted
$C^*$-algebra $C^*(\Lambda, c)$ as a direct limit of direct sums of matrix algebras over
the algebras $C^*(\Lambda_v, c|_{\Lambda_v})$.

We take the convention that a Bratteli diagram is a 1-graph $E$ with a partition of $E^0$
into finite subsets $E^0=\bigsqcup_{n=1}^\infty E_n^0$ such that
\[
E^1 = \bigsqcup_{n=1}^\infty E_n^0 E^1 E_{n+1}^0.
\]
We let $E^*$ denote the set of finite paths in $E$. Following \cite{Bra} we insist that
\[
vE^1 \not= \emptyset\text{ for all $v$,}
    \quad\text{ and }\quad
E^1 v \not=\emptyset\text{ for all $v \in E^0 \setminus E_1^0$.}
\]
This implies that each $E_n^0$ is non-empty. In the usual convention for drawing directed
graphs the vertex set $E^0$ has levels $E_n^0$ arranged horizontally, and edges point
from right to left: each edge in $E$ points from some level $E_{n+1}^0$ to the level
$E_{n}^0$ immediately to its left.

We say that a Bratteli diagram $E$ is \emph{singly connected} if $|vE^1w|\leq 1$ for all
$v,w\in E^0$.

\begin{defn}
A \emph{Bratteli diagram of covering maps between $k$-graphs} consists of a singly
connected Bratteli diagram $E$, together with a collection $(\Lambda_v)_{v\in E^0}$ of
$k$-graphs and a collection $(p_e)_{e\in E^1}$ of covering maps $p_e\colon
\Lambda_{s(e)}\to \Lambda_{r(e)}$.
\end{defn}

A Bratteli diagram of covering maps is sketched below.
\[
\begin{tikzpicture}[scale=1.5]
    \node[inner sep=0.5pt, circle] (13) at (2,0) {$g$};
    \node[inner sep=0.5pt, circle] (14) at (3,0) {$\cdots$};
    \node[inner sep=0.5pt, circle] (21) at (0,1) {$d$};
    \node[inner sep=0.5pt, circle] (22) at (1,1) {$e$};
    \node[inner sep=0.5pt, circle] (23) at (2,1) {$f$};
    \node[inner sep=0.5pt, circle] (24) at (3,1) {$\cdots$};
    \node[inner sep=0.5pt, circle] (31) at (0,2) {$a$};
    \node[inner sep=0.5pt, circle] (32) at (1,2) {$b$};
    \node[inner sep=0.5pt, circle] (33) at (2,2) {$c$};
    \node[inner sep=0.5pt, circle] (34) at (3,2) {$\cdots$};
    \draw[-latex, black] (14)--(13) node[inner sep=0.5pt, circle, pos=0.5, below, black] {\tiny{$r$}};
    \draw[-latex, black] (13)--(32) node[inner sep=0.5pt, circle, pos=0.4, anchor=north east, black] {\tiny{$o$}};
    \draw[-latex, black] (32)--(31) node[inner sep=0.5pt, circle, pos=0.5, above, black] {\tiny{$h$}};
    \draw[-latex, black] (24)--(23) node[inner sep=0.5pt, circle, pos=0.5, below, black] {\tiny{$n$}};
    \draw[-latex, black] (34)--(33) node[inner sep=0.5pt, circle, pos=0.5, above, black] {\tiny{$j$}};
    \draw[-latex, black] (33)--(22) node[inner sep=0.5pt, circle, pos=0.2, anchor=north west, black] {\tiny{$m$}};
    \draw[-latex, black] (33)--(32) node[inner sep=0.5pt, circle, pos=0.5, above, black] {\tiny{$k$}};
    \draw[-latex, black] (22)--(21) node[inner sep=0.5pt, circle, pos=0.5, below, black] {\tiny{$l$}};
    \draw[-latex, black] (23)--(32) node[inner sep=0.5pt, circle, pos=0.3, below, black] {\tiny{$i$}};
\end{tikzpicture}\hskip5em
\begin{tikzpicture}[scale=1.5]
    \node[inner sep=0.5pt, circle] (13) at (2,0) {$\Lambda_g$};
    \node[inner sep=0.5pt, circle] (14) at (3,0) {$\cdots$};
    \node[inner sep=0.5pt, circle] (21) at (0,1) {$\Lambda_d$};
    \node[inner sep=0.5pt, circle] (22) at (1,1) {$\Lambda_e$};
    \node[inner sep=0.5pt, circle] (23) at (2,1) {$\Lambda_f$};
    \node[inner sep=0.5pt, circle] (24) at (3,1) {$\cdots$};
    \node[inner sep=0.5pt, circle] (31) at (0,2) {$\Lambda_a$};
    \node[inner sep=0.5pt, circle] (32) at (1,2) {$\Lambda_b$};
    \node[inner sep=0.5pt, circle] (33) at (2,2) {$\Lambda_c$};
    \node[inner sep=0.5pt, circle] (34) at (3,2) {$\cdots$};
    \draw[-latex, black] (14)--(13) node[inner sep=0.5pt, circle, pos=0.5, below, black] {\tiny{$p_r$}};
    \draw[-latex, black] (13)--(32) node[inner sep=0.5pt, circle, pos=0.4, anchor=north east, black] {\tiny{$p_o$}};
    \draw[-latex, black] (32)--(31) node[inner sep=0.5pt, circle, pos=0.5, above, black] {\tiny{$p_h$}};
    \draw[-latex, black] (24)--(23) node[inner sep=0.5pt, circle, pos=0.5, below, black] {\tiny{$p_n$}};
    \draw[-latex, black] (34)--(33) node[inner sep=0.5pt, circle, pos=0.5, above, black] {\tiny{$p_j$}};
    \draw[-latex, black] (33)--(22) node[inner sep=0.5pt, circle, pos=0.2, anchor=north west, black] {\tiny{$p_m$}};
    \draw[-latex, black] (33)--(32) node[inner sep=0.5pt, circle, pos=0.5, above, black] {\tiny{$p_k$}};
    \draw[-latex, black] (22)--(21) node[inner sep=0.5pt, circle, pos=0.5, below, black] {\tiny{$p_l$}};
    \draw[-latex, black] (23)--(32) node[inner sep=0.5pt, circle, pos=0.3, below, black] {\tiny{$p_i$}};
\end{tikzpicture}
\]

Given a Bratteli diagram of covering maps between $k$-graphs there exist a unique
$(k+1)$-graph, denoted $\Lambda_E$ (or simply $\Lambda$), together with injective
functors $\iota_v: \Lambda_v \to \Lambda$, $v\in E^0$, and a bijective map $e:
\bigsqcup_{v\in E^0\backslash E^0_1} \Lambda_v^0\times E^1v \to \Lambda^{e_{k+1}}$, such
that
\begin{enumerate}
\item $d(\iota_v(\lambda))=(d(\lambda),0)$ for $\lambda\in \Lambda_v$,
\item $\iota_v(\Lambda_v) \cap \iota_w(\Lambda_w) = \emptyset$ for $v\neq w \in E^0$,
\item $\bigsqcup_{v\in E^0} \iota_v(\Lambda_v) = \{\lambda \in \Lambda :
    d(\lambda)_{k+1}=0\}$,
\item $s(e(w,f))=\iota_v(w)$ and $r(e(w,f))=\iota_{r(f)}(p_f(w))$ for $v\in
    E^0\backslash E^0_1$ and $(w,f)\in \Lambda_v^0\times E^1v$, and
\item $e(r(\lambda),f)\iota_v(\lambda) = \iota_{r(f)}(p_f(\lambda))e(s(\lambda),f)$
    for $v\in E^0\backslash E^0_1$ and $(\lambda,f)\in \Lambda_{v}^0\times E^1v$.
\end{enumerate}
The construction of the $(k+1)$-graph $\Lambda$ is taken from \cite{KumPasSim}, with the
exception that we describe the underlying data as a Bratteli diagram rather than a
sequence of $\{0,1\}$-valued matrices. We will view each $\Lambda_v$ as a subset of
$\Lambda$.

\begin{defn}[c.f.~Definition~\ref{def2.2}]
\label{def5.5} Let $\Lambda=\Lambda_E$ be the $(k+1)$-graph associated to a Bratteli
diagram of covering maps between $k$-graphs, and let $c_v\in Z^2(\Lambda_v, A)$ for each
$v\in E^0$. The collection of 2-cocycles $(c_{v})$ is called \emph{compatible} if there
exists a 2-cocycle $c\in Z^2(\Lambda, A)$ such that $c|_{\Lambda_{v}} = c_v$ for all
$v\in E^0$.
\end{defn}

Definition~\ref{def5.5} shows how to build a twisted $(k+1)$-graph $C^*$-algebra from a
Bratteli diagram of covering maps. We will exhibit this $C^*$-algebra as an inductive
limit. This involves considering homomorphisms between twisted $k$-graph $C^*$-algebras
associated to subgraphs of the ambient $(k+1)$-graph $\Lambda_E$ associated to the
Bratteli diagram of covering maps. In keeping with this, we use the same symbol $s$ to
denote the generating twisted Cuntz-Krieger families $s\colon\lambda \mapsto s_\lambda$ of
the $C^*$-algebras of the different subgraphs. It will be clear from context which
$k$-graph we are working in at any given time.

\begin{lemma}
\label{lem5.5} Let $\Lambda=\Lambda_E$ be the $(k+1)$-graph associated to a Bratteli
diagram of covering maps between row finite locally convex $k$-graphs, together with a
compatible collection $(c_v)$ of 2-cocycles. For each $e\in E^1$ there exists an
embedding $\iota_e\colon C^*(\Lambda_{r(e)}, c_{r(e)})\to C^*(\Lambda_{s(e)}, c_{s(e)})$
such that
\[
\iota_e(s_\lambda) = \sum_{p_e(\mu) = \lambda} s_\mu \quad\text{ for all $\lambda\in \Lambda_{r(e)}$.}
\]
\end{lemma}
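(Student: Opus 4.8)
The plan is to exhibit the elements $t_\lambda := \sum_{p_e(\mu) = \lambda} s_\mu$ (a finite sum, since the covering $p_e$ has finite fibres) as a Cuntz--Krieger $(\Lambda_{r(e)}, c_{r(e)})$-family inside $C^*(\Lambda_{s(e)}, c_{s(e)})$; the universal property of $C^*(\Lambda_{r(e)}, c_{r(e)})$ then produces a homomorphism $\iota_e$ with $\iota_e(s_\lambda) = t_\lambda$, and a gauge-invariance argument shows it is injective. Throughout I use the bijections supplied by the covering property of $p_e$: for $\lambda \in \Lambda_{r(e)}$ the maps $\mu \mapsto r(\mu)$ and $\mu \mapsto s(\mu)$ identify $p_e^{-1}(\lambda)$ with the vertex fibres $p_e^{-1}(r(\lambda))$ and $p_e^{-1}(s(\lambda))$, and $p_e$ restricts to a degree-preserving bijection of $w\Lambda_{s(e)}^{\le n}$ onto $p_e(w)\Lambda_{r(e)}^{\le n}$.

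The relations (CK1), (CK3) and (CK4) are then orthogonality bookkeeping. For (CK1), each $t_v$ is a sum of the mutually orthogonal projections $\{s_w : p_e(w) = v\}$, hence a projection, and distinct $v$ give disjoint fibres, hence orthogonal $t_v$. For (CK3), expanding $t_\lambda^* t_\lambda = \sum_{\mu,\nu} s_\mu^* s_\nu$ and using that $s_\mu^* s_\nu = 0$ for distinct $\mu,\nu$ of equal degree leaves $\sum_{p_e(\mu)=\lambda} s_{s(\mu)} = t_{s(\lambda)}$, via the source bijection. For (CK4), expanding $\sum_{\lambda \in v\Lambda_{r(e)}^{\le n}} t_\lambda t_\lambda^*$ and using that $s_\mu s_{\mu'}^* = 0$ unless $s(\mu) = s(\mu')$ (which, together with $p_e(\mu)=p_e(\mu')$, forces $\mu=\mu'$ as $p_e$ is injective on $\Lambda_{s(e)}s(\mu)$) collapses the sum to $\sum_{w \in p_e^{-1}(v)}\sum_{\mu \in w\Lambda_{s(e)}^{\le n}} s_\mu s_\mu^* = \sum_{w \in p_e^{-1}(v)} s_w = t_v$, using (CK4) in $\Lambda_{s(e)}$ and that $p_e$ preserves $\Lambda^{\le n}$.

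The substantive step, which I expect to be the main obstacle, is (CK2); it reduces to the cocycle-matching identity
\[
c_{s(e)}(\sigma,\tau) = c_{r(e)}(p_e(\sigma), p_e(\tau))\quad\text{for all composable }\sigma,\tau \in \Lambda_{s(e)}.
\]
Granting this, and using that the factorisation property makes $(\sigma,\tau)\mapsto\sigma\tau$ a bijection from composable lifts of $(\mu,\nu)$ onto $p_e^{-1}(\mu\nu)$, a direct expansion gives $t_\mu t_\nu = \sum c_{s(e)}(\sigma,\tau)\, s_{\sigma\tau} = c_{r(e)}(\mu,\nu)\, t_{\mu\nu}$. To prove the identity I would fix a common cocycle $c \in Z^2(\Lambda,A)$ with $c|_{\Lambda_v}=c_v$, so the claim becomes $c(\sigma,\tau)=c(p_e(\sigma),p_e(\tau))$, and, arguing as for~\eqref{eqn3.3} in Lemma~\ref{cor2.5} (now localised to the single edge $e$), take $c$ to vanish on any pair one of whose entries lies in $\Lambda^{\NN e_{k+1}}$. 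Writing $g_w := e(w,e) \in \Lambda^{e_{k+1}}$ for the degree-$e_{k+1}$ edges of property~(5), the relations $g_{r(\sigma)}\sigma = p_e(\sigma)g_{s(\sigma)}$ and $g_{s(\sigma)}\tau = p_e(\tau)g_{s(\tau)}$ let me run the cocycle identity on the triples $(g_{r(\sigma)},\sigma,\tau)$, $(p_e(\sigma),g_{s(\sigma)},\tau)$ and $(p_e(\sigma),p_e(\tau),g_{s(\tau)})$; the six terms containing an edge $g_\bullet$ vanish by the normalisation, and the survivors telescope to $c(\sigma,\tau)=c(p_e(\sigma),p_e(\tau))$.

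Finally, for injectivity I would invoke the gauge-invariant uniqueness theorem for twisted $k$-graph $C^*$-algebras \cite{Whi}. Since $p_e$ is degree-preserving, every $\mu$ in the sum defining $t_\lambda$ has $d(\mu)=d(\lambda)$, so $\iota_e$ intertwines the gauge actions, $\beta_z \circ \iota_e = \iota_e \circ \gamma_z$, where $\gamma$ and $\beta$ denote the gauge actions on $C^*(\Lambda_{r(e)}, c_{r(e)})$ and $C^*(\Lambda_{s(e)}, c_{s(e)})$. Moreover $\iota_e(s_v)=t_v$ is a nonzero sum of orthogonal projections for each $v\in\Lambda_{r(e)}^0$, as $p_e^{-1}(v)$ is nonempty by surjectivity of $p_e$. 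Since $\Lambda_{r(e)}$ is row-finite and locally convex, the gauge-invariant uniqueness theorem forces $\iota_e$ to be injective, so it is the required embedding.
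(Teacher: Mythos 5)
Your overall strategy --- exhibit $t_\lambda=\sum_{p_e(\mu)=\lambda}s_\mu$ as a Cuntz--Krieger $(\Lambda_{r(e)},c_{r(e)})$-family in $C^*(\Lambda_{s(e)},c_{s(e)})$, invoke the universal property, and obtain injectivity from gauge-invariant uniqueness --- is exactly the one the paper intends: its own proof is a one-line deferral to \cite[Remark~3.5(2)]{KumPasSim} (the untwisted case) with the assertion that the argument goes through mutatis mutandis. Your verifications of (CK1), (CK3), (CK4) and the injectivity step are correct, and you have correctly isolated where the ``mutatis mutandis'' has actual content, namely (CK2).

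However, your argument for (CK2) has a genuine gap. The matching identity $c_{s(e)}(\sigma,\tau)=c_{r(e)}(p_e(\sigma),p_e(\tau))$ does not follow from compatibility of the family $(c_v)$. Compatibility supplies \emph{some} $c\in Z^2(\Lambda,A)$ with $c|_{\Lambda_v}=c_v$ for all $v$; you cannot in addition ``take $c$ to vanish on any pair one of whose entries lies in $\Lambda^{\NN e_{k+1}}$'', because by (the Bratteli-diagram analogue of) Lemma~\ref{cor2.5} the extension with that normalisation is unique and its restrictions to later levels are forced to be pullbacks along the covering maps, which need not coincide with the prescribed $c_v$. Indeed, if you run your own telescoping computation on the triples $(g_{r(\sigma)},\sigma,\tau)$, $(p_e(\sigma),g_{s(\sigma)},\tau)$, $(p_e(\sigma),p_e(\tau),g_{s(\tau)})$ \emph{without} discarding the terms containing a $g_\bullet$, the surviving terms assemble into $c_{s(e)}(\sigma,\tau)-c_{r(e)}(p_e(\sigma),p_e(\tau))=\delta^1\beta(\sigma,\tau)$, where $\beta(\mu):=c(g_{r(\mu)},\mu)-c(p_e(\mu),g_{s(\mu)})$ is the analogue of the cochain $b$ in Lemma~\ref{thm2.5}. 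This coboundary is nonzero in general: replacing $c$ by $c+\delta^1 b$ with $b\in C^1(\Lambda,A)$ supported on $\Lambda_{s(e)}$ and not pulled back along $p_e$ yields another compatible family for which the scalars $c_{s(e)}(\sigma,\tau)$ vary over the composable lifts of a fixed pair $(\mu,\nu)$, so $t_\mu t_\nu$ is not a scalar multiple of $t_{\mu\nu}$ and no homomorphism with the stated formula exists. So the statement really requires either the normalisation $c_{s(e)}=(p_e)_*c_{r(e)}$ --- which does hold for every cocycle of the form $\dstar{}c$ used later in the paper, since $p_e$ preserves degrees --- or a correction of the formula for $\iota_e$ by the unimodular scalars determined by $\beta$. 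Your proof should either impose this reduction explicitly (e.g.\ via Theorem~\ref{cor2.6}, replacing $c$ by a cohomologous cocycle of the normalised form, at the cost of adjusting all the algebras by canonical isomorphisms) or carry the correction scalars through the (CK) relations.
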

\begin{proof}
We follow the argument of \cite[Remark~3.5.(2)]{KumPasSim} (which applies in the
situation where $c_{r(e)} = c_{s(e)} \equiv 1$). The argument goes through mutatis
mutandis when the cocycles are nontrivial.
\end{proof}

With slight abuse of notation, in the situation of Lemma~\ref{lem5.5}, given $n\geq 1$,
we also write $\iota_e$ for the induced map $\iota^{(n)}_{e}\colon
M_n(C^*(\Lambda_{r(e)}, c_{r(e)}))\to M_n(C^*(\Lambda_{s(e)}, c_{s(e)}))$ given by
\[
\left(\begin{array}{ccc}
	a_{11} & \hdots & a_{1n} \\
	\vdots & \ddots & \vdots \\
		a_{n1} & \hdots & a_{nn}
\end{array}\right)
\mapsto
\left(\begin{array}{ccc}
	\iota_e(a_{11}) & \hdots & \iota_e(a_{1n}) \\
	\vdots & \ddots & \vdots \\
		\iota_e(a_{n1}) & \hdots & \iota_e(a_{nn})
\end{array}\right).
\]

\begin{thm}\label{thm5.6}
Let $\Lambda=\Lambda_E$ be the $(k+1)$-graph associated to a Bratteli diagram of covering
maps between row finite locally convex $k$-graphs, together with a compatible collection
$(c_v)$ of 2-cocycles. Let $c\in Z^2(\Lambda,\TT)$ be a 2-cocycle such that
$c|_{\Lambda_{v}} = c_v$. The projection $P_0 := \sum_{v\in E_1^0,\, w\in \Lambda_v^0}
s_w$ is full in $C^*(\Lambda, c)$. For $n \in \NN$, let $A_n := \clsp\{s_\mu s^*_\nu :
\mu,\nu \in \Lambda, d(\mu)_{k+1} = d(\nu)_{k+1} = n\}$. Then each $A_n \subseteq
A_{n+1}$, and
\[\textstyle
P_0 C^*(\Lambda,c) P_0
    = \overline{\bigcup_n A_n}.
\smallskip\]
For each $\alpha=\alpha_1 \dots \alpha_n \in E^0_1 E^* E_n^0$, let $F(\alpha) :=
\{\mu=\mu_1 \dots \mu_n \in \Lambda^{ne_{k+1}} : r(\mu_i) \in \Lambda^0_{r(\alpha_i)}$
for $i \le n$, and $s(\mu_n) \in \Lambda^0_{s(\alpha_n)}\}$. Then each $T_\alpha :=
\sum_{\mu \in F(\alpha)} s_\mu$ is a partial isometry, and there is an isomorphism
$\omega_n\colon\bigoplus_{v\in E^0_n} M_{E^0_1E^*v}(C^*(\Lambda_v, c_v)) \to A_n$ such that
\[
\omega_n\Big(\big((a_{\alpha,\beta})_{\alpha,\beta \in E^0_1E^*v}\big)_v\Big)
    =\sum_v \sum_{\alpha,\beta \in E^0_1 E^* v} T_\alpha a_{\alpha,\beta} T^*_\beta.
\]
We have $\omega_{n+1} \circ (\operatorname{diag}_{{e\in E^1w}} \iota_e) = \omega_n$, and so
\[
P_0 C^*(\Lambda, c) P_0
    \cong \varinjlim \left(\bigoplus_{v\in E^0_n} M_{E^0_1E^*v}(C^*(\Lambda_v, c_v)),\,
        \sum_{v\in E^0_n}a_v \mapsto \sum_{w\in E^0_{n+1}}
            \operatorname{diag}_{{e\in E^1w}} \left(\iota_e(a_{r(e)})\right)\right).
\]
\end{thm}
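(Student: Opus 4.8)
\emph{Fullness and the corner.} First I would show that every vertex projection $s_v$ lies in the ideal generated by $P_0$. Given $v\in\Lambda^0$, say $v\in\Lambda_w^0$ with $w\in E^0_n$, the Bratteli-diagram hypotheses $E^1u\ne\emptyset$ for $u\notin E^0_1$ let me run up through $E$ from $w$ to a level-$1$ vertex; property~(5) of $\Lambda$ then produces a vertical path $\xi\in\Lambda^{(n-1)e_{k+1}}$ with $s(\xi)=v$ and $r(\xi)$ at level $1$ (this $\xi$ is unique once its source and the $E$-path are fixed). Then (CK3) gives $s_\xi^*s_\xi=s_v$ while $s_\xi s_\xi^*\le s_{r(\xi)}\le P_0$, so $s_v=s_\xi^*(s_\xi s_\xi^*)s_\xi$ lies in the ideal generated by $P_0$; since the $s_v$ generate $C^*(\Lambda,c)$ as an ideal, $P_0$ is full. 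Next I would record that $P_0s_\mu=s_\mu$ when $r(\mu)$ is a level-$1$ vertex and $P_0s_\mu=0$ otherwise, so that $P_0C^*(\Lambda,c)P_0=\clsp\{s_\mu s_\nu^*:s(\mu)=s(\nu),\ r(\mu),r(\nu)\text{ at level }1\}$. Throughout I read $A_n$ as spanned by these \emph{corner} generators (equivalently $P_0A_nP_0$), which is the version for which $\omega_n$ is onto $A_n$ and which the corner identity forces.

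\emph{The filtration.} The key observation is that for a corner generator $s_\mu s_\nu^*$, the equality $s(\mu)=s(\nu)$ together with $r(\mu),r(\nu)$ at level $1$ forces $d(\mu)_{k+1}=d(\nu)_{k+1}$; hence every corner generator lies in some $A_n$, giving $P_0C^*(\Lambda,c)P_0=\overline{\bigcup_nA_n}$. For $A_n\subseteq A_{n+1}$ I would expand $s_{s(\mu)}$ in the $(k+1)$-direction by (CK4): since $s(\mu)$ emits an edge of degree $e_{k+1}$ (guaranteed by $uE^1\ne\emptyset$), $s_\mu s_\nu^*=s_\mu s_{s(\mu)}s_\nu^*=\sum_\zeta c(\mu,\zeta)\overline{c(\nu,\zeta)}\,s_{\mu\zeta}s_{\nu\zeta}^*\in A_{n+1}$. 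Using the standard formula for $s_\nu^*s_\sigma$ recalled earlier, together with the fact that minimal common extensions of two degree-$n$ vertical segments have trivial $(k+1)$-part, one sees each $A_n$ is closed under products.

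\emph{Matrix units and $\omega_n$.} I would then analyse $T_\alpha=\sum_{\mu\in F(\alpha)}s_\mu$. Each $\mu\in F(\alpha)$ is the \emph{unique} vertical lift of $\alpha$ determined by its source $s(\mu)\in\Lambda_v^0$ (read the edges $e(w,f)$ upward), so distinct lifts of equal degree satisfy $s_\mu^*s_{\mu'}=\delta_{\mu,\mu'}s_{s(\mu)}$. Hence $T_\alpha^*T_\alpha=\sum_{y\in\Lambda_v^0}s_y=:P_v$ is a projection, proving $T_\alpha$ is a partial isometry; more generally $T_\alpha^*T_\beta=\delta_{\alpha,\beta}P_v$ for $\alpha,\beta$ ending at $v$ (as $F(\alpha)\cap F(\beta)=\emptyset$ when $\alpha\ne\beta$), while $P_vP_{v'}=0$ for $v\ne v'$. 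The crucial collapse is $T_\alpha s_\lambda=c(\mu,\lambda)s_{\mu\lambda}$, where $\mu$ is the unique lift with $s(\mu)=r(\lambda)$: it is a \emph{single} term, and shows both that conjugation $a\mapsto T_\alpha aT_\beta^*$ maps the copy of $C^*(\Lambda_v,c_v)$ generated by $\{s_\lambda:\lambda\in\Lambda_v\}$ (a Cuntz-Krieger $(\Lambda_v,c_v)$-family because $c|_{\Lambda_v}=c_v$) into $A_n$, and that every corner generator $s_\kappa s_\tau^*$ equals a scalar times $T_\alpha s_\lambda s_\rho^*T_\beta^*$ after factoring $\kappa,\tau$ as vertical-then-horizontal. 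Since $P_v$ acts as the unit of $C^*(\Lambda_v,c_v)$, the $\{T_\alpha\}$ form a genuine system of matrix units with coefficients in $C^*(\Lambda_v,c_v)$; I would conclude that $\omega_n$ is a well-defined $*$-homomorphism, surjective by the factorisation above, and injective because $T_\gamma^*\omega_n\big((a_{\alpha\beta})\big)T_\delta=a_{\gamma\delta}$ recovers each entry.

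\emph{Compatibility and the main obstacle.} Finally I would verify $\omega_{n+1}\circ(\operatorname{diag}_e\iota_e)=\omega_n$. Its content is the intertwining relation $T_\alpha s_\lambda=\sum_{e\in E^1w}T_{\alpha e}\,\iota_e(s_\lambda)$, obtained by applying property~(5) of $\Lambda$ to push a horizontal edge at level $n$ past a vertical edge into level $n+1$; this is exactly the relation encoded by the embeddings $\iota_e$ of Lemma~\ref{lem5.5}. Substituting it into the formula for $\omega_n$ and matching the blocks indexed by $w\in E^0_{n+1}$ and $e\in E^1w$ yields the identity, and the direct-limit description then follows formally. I expect the main obstacle to be precisely this compatibility step: it requires carefully lining up the path data (the bijection between $F(\alpha)$ and the lifts contributing to $F(\alpha e)$ under $p_e$) and keeping the cocycle factors consistent, so that the scalars produced by $\iota_e$ agree with those from the factorisation property. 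All of the cocycle bookkeeping—ensuring the corner copies are $C^*(\Lambda_v,c_v)$ on the nose rather than cohomologous perturbations—rests on the compatibility hypothesis $c|_{\Lambda_v}=c_v$.
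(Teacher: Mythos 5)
Your argument is correct and is essentially the paper's own proof: the paper simply cites \cite[Theorem~3.8]{KumPasSim} with Lemma~\ref{lem5.5} substituted for the untwisted embedding, and what you have written is a reconstruction of exactly that corner/matrix-unit/direct-limit argument, with the cocycle factors tracked correctly. Two small points to tighten: the intertwining relation should be stated two-sidedly, as $T_\alpha s_\lambda s_\rho^* T_\beta^* = \sum_{e} T_{\alpha e}\,\iota_e(s_\lambda s_\rho^*)\,T_{\beta e}^*$ (the one-sided version $T_\alpha s_\lambda=\sum_e T_{\alpha e}\iota_e(s_\lambda)$ only holds after right-multiplying by the sum of level-$(n+2)$ vertex projections, since the two sides have different degrees); and injectivity of $\omega_n$ requires knowing that the copy of $C^*(\Lambda_v,c_v)$ sitting under $P_v$ is faithful, which is where a gauge-invariant uniqueness theorem enters.
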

\begin{proof}
We follow the proof of \cite[Theorem~3.8]{KumPasSim}, using Lemma~\ref{lem5.5} in place
of \cite[Remark~3.5.2]{KumPasSim}.
\end{proof}

\section{Rank-3 Bratteli diagrams and their \texorpdfstring{$C^*$}{C*}-algebras}\label{sec:rank3BD}
We are now ready to prove our main results. We consider a special class of $3$-graphs,
which we call ``rank-3 Bratteli diagrams" (see Definition~\ref{def5.6}). We will compute
the $K$-theory of the twisted $C^*$-algebras of these $3$-graphs associated to twists by
irrational angles using the inductive-limit decomposition just described and the
well-known formula for the ordered $K$-theory of the irrational rotation algebras. We
will deduce from Elliott's theorem that these $C^*$-algebras are classified by $K$-theory
whenever they are simple, and deduce that such $C^*$-algebras, when simple, can all be
realised by rank-2 Bratteli diagrams as in \cite{PasRaeRorSim}.

Recall that a Bratteli diagram is singly connected if $|vE^1w|\leq 1$ for all $v,w\in
E^0$. Our rank-3 Bratteli diagrams will be constructed from singly connected Bratteli
diagrams of coverings of $2$-graphs where the individual $2$-graphs are rank-2 simple
cycles. The lengths of the cycles are encoded by an additional piece of information: a
weight map on the underlying Bratteli diagram.

\begin{defn}
Let $E$ be a Bratteli diagram. A \emph{weight map} on $E$ is a function $w \colon E^0\to
\NN\backslash \{0\}$ such that $w(r(e))$ divides $w(s(e))$ for all $e\in E^1$. A
\emph{weighted Bratteli diagram} is a Bratteli diagram $E$ together with a weight map.
\end{defn}

An example of the first few levels of a singly connected weighted Bratteli diagram is
sketched below (the weight map $w$ is identified by labelling the vertices):
\begin{equation}\label{eq:wbd}
\parbox{5cm}{
\begin{tikzpicture}[scale=1.5]
    \node[inner sep=0.5pt, circle] (13) at (2,0) {\small$\bullet$};
    \node[anchor=south, inner sep=1pt] at (13.north) {\tiny$2$};
    \node[inner sep=0.5pt, circle] (14) at (3,0) {$\cdots$};
    \node[inner sep=0.5pt, circle] (21) at (0,1) {\small$\bullet$};
    \node[anchor=south, inner sep=1pt] at (21.north) {\tiny$3$};
    \node[inner sep=0.5pt, circle] (22) at (1,1) {\small$\bullet$};
    \node[anchor=south, inner sep=1pt] at (22.north) {\tiny$3$};
    \node[inner sep=0.5pt, circle] (23) at (2,1) {\small$\bullet$};
    \node[anchor=south, inner sep=1pt] at (23.north) {\tiny$2$};
    \node[inner sep=0.5pt, circle] (24) at (3,1) {$\cdots$};
    \node[inner sep=0.5pt, circle] (31) at (0,2) {\small$\bullet$};
    \node[anchor=south, inner sep=1pt] at (31.north) {\tiny$1$};
    \node[inner sep=0.5pt, circle] (32) at (1,2) {\small$\bullet$};
    \node[anchor=south, inner sep=1pt] at (32.north) {\tiny$2$};
    \node[inner sep=0.5pt, circle] (33) at (2,2) {\small$\bullet$};
    \node[anchor=south, inner sep=1pt] at (33.north) {\tiny$6$};
    \node[inner sep=0.5pt, circle] (34) at (3,2) {$\cdots$};
    \draw[-latex, black] (14)--(13);
    \draw[-latex, black] (13)--(32);
    \draw[-latex, black] (32)--(31);
    \draw[-latex, black] (24)--(23);
    \draw[-latex, black] (34)--(33);
    \draw[-latex, black] (33)--(22);
    \draw[-latex, black] (33)--(32);
    \draw[-latex, black] (22)--(21);
    \draw[-latex, black] (23)--(32);
\end{tikzpicture}}
\end{equation}

To construct our rank-3 Bratteli diagrams, we need to recall the skew-product
construction for $k$-graphs. Following \cite{KumPas}, fix a $k$-graph $\Lambda$ and a functor $\eta:\Lambda\to G$ into
a countable group $G$. The \emph{skew product graph}, denoted $\Lambda\times_\eta G$, is
the $k$-graph with morphisms $\Lambda\times G$, source, range and degree maps given by
\[
    r(\lambda,g)=(r(\lambda),g), \quad s(\lambda,g)=(s(\lambda),g\eta(\lambda)),
        \quad d(\lambda,g)=d(\lambda),
\]
and composition given by $(\lambda,g)(\mu,h)=(\lambda\mu,g)$ whenever
$s(\lambda,g)=r(\mu,h)$.

\begin{defn}\label{def5.6}
Let $E$ be a singly connected weighted Bratteli diagram. For $v\in E^0$, let $a_v, b_v$
be the blue and red (respectively) edges in a copy $T_2^v$ of $T_2$. For each $v$, let $1
: T_2^v \to \ZZ/w(v)\ZZ$ be the functor such that $1(a_v) = 1(b_v) = 1$, the generator of
$\ZZ/w(v)\ZZ$. The \emph{rank-3 Bratteli diagram $\Lambda_E$} (or simply $\Lambda$)
associated to $E$ is the unique 3-graph arising from the Bratteli diagram of covering
maps given by
\begin{align*}
\Lambda_v &= T_2^v \times_1  \ZZ / w(v)\ZZ, &v&\in E^0,\\
p_f(a_{s(f)}^sb_{s(f)}^t,m) &= (a_{r(f)}^sb_{r(f)}^t,m  \text{ mod } w(r(f)), &s,t&\in \NN, f\in E^1,m\in \ZZ/w(s(f))\ZZ.
\end{align*}
To keep notation compact we write $\{(v,m): m=0,\dots, w(v)-1\}$ for the vertices of
$\Lambda_v$.

\end{defn}
Figure~\ref{figpic6} illustrates the portion of the skeleton of a rank-3 Bratteli
diagram corresponding to the portion of a weighted Bratteli in~\eqref{eq:wbd}.
\begin{figure}[ht]
\begin{center}
\begin{tikzpicture}[xscale=2, yscale=1.25]
    \node[inner sep=0.5pt, circle] (40) at (4,1) {\tiny${(h,1)}$};
    \node[inner sep=0.5pt, circle] (50) at (6,1) {$\cdots$};
    \node[inner sep=0.5pt, circle] (41) at (4,2) {\tiny${(h,0)}$};
    \node[inner sep=0.5pt, circle] (51) at (6,2) {$\cdots$};
    \node[inner sep=0.5pt, circle] (04) at (0,4) {\tiny${(d,2)}$};
    \node[inner sep=0.5pt, circle] (24) at (2,4) {\tiny${(e,2)}$};
    \node[inner sep=0.5pt, circle] (05) at (0,5) {\tiny${(d,1)}$};
    \node[inner sep=0.5pt, circle] (25) at (2,5) {\tiny${(e,1)}$};
    \node[inner sep=0.5pt, circle] (42) at (4,3) {\tiny${(f,1)}$};
    \node[inner sep=0.5pt, circle] (52) at (6,3) {$\cdots$};
    \node[inner sep=0.5pt, circle] (06) at (0,6) {\tiny${(d,0)}$};
    \node[inner sep=0.5pt, circle] (26) at (2,6) {\tiny${(e,0)}$};
    \node[inner sep=0.5pt, circle] (43) at (4,4) {\tiny${(f,0)}$};
    \node[inner sep=0.5pt, circle] (53) at (6,4) {$\cdots$};
    \node[inner sep=0.5pt, circle] (45) at (4,5) {\tiny${(c,5)}$};
    \node[inner sep=0.5pt, circle] (55) at (6,5) {$\cdots$};
    \node[inner sep=0.5pt, circle] (46) at (4,6) {\tiny${(c,4)}$};
    \node[inner sep=0.5pt, circle] (56) at (6,6) {$\cdots$};
    \node[inner sep=0.5pt, circle] (47) at (4,7) {\tiny${(c,3)}$};
    \node[inner sep=0.5pt, circle] (57) at (6,7) {$\cdots$};
    \node[inner sep=0.5pt, circle] (48) at (4,8) {\tiny${(c,2)}$};
    \node[inner sep=0.5pt, circle] (58) at (6,8) {$\cdots$};
    \node[inner sep=0.5pt, circle] (27) at (2,7) {\tiny${(b,1)}$};
    \node[inner sep=0.5pt, circle] (49) at (4,9) {\tiny${(c,1)}$};
    \node[inner sep=0.5pt, circle] (59) at (6,9) {$\cdots$};
    \node[inner sep=0.5pt, circle] (07) at (0,7.5) {\tiny${(a,0)}$};
    \node[inner sep=0.5pt, circle] (28) at (2,8) {\tiny${(b,0)}$};
    \node[inner sep=0.5pt, circle] (410) at (4,10) {\tiny${(c,0)}$};
    \node[inner sep=0.5pt, circle] (510) at (6,10) {$\cdots$};
    \draw[-latex, red, dashed] (27) edge[out=180,in=180] (28);
    \draw[-latex, blue] (27) edge[out=160,in=200] (28);
    \draw[-latex, red, dashed] (28) edge[out=0,in=0] (27);
    \draw[-latex, blue] (28) edge[out=340,in=20] (27);
    \draw[-latex, red, dashed] (46) edge[out=180,in=180] (47);
    \draw[-latex, blue] (46) edge[out=160,in=200] (47);
    \draw[-latex, red, dashed] (410) edge[out=340,in=20] (45); 
    \draw[-latex, blue] (410) edge[out=330,in=30] (45); 
    \draw[-latex, red, dashed] (45) edge[out=180,in=180] (46);
    \draw[-latex, blue] (45) edge[out=160,in=200] (46);
    \draw[-latex, red, dashed] (46) edge[out=180,in=180] (47);
    \draw[-latex, blue] (46) edge[out=160,in=200] (47);
    \draw[-latex, red, dashed] (47) edge[out=180,in=180] (48);
    \draw[-latex, blue] (47) edge[out=160,in=200] (48);
    \draw[-latex, red, dashed] (48) edge[out=180,in=180] (49);
    \draw[-latex, blue] (48) edge[out=160,in=200] (49);
    \draw[-latex, red, dashed] (49) edge[out=180,in=180] (410);
    \draw[-latex, blue] (49) edge[out=160,in=200] (410);
    \draw[-latex, red, dashed] (04) edge[out=180,in=180] (05);
    \draw[-latex, blue] (04) edge[out=160,in=200] (05);
    \draw[-latex, red, dashed] (05) edge[out=180,in=180] (06);
    \draw[-latex, blue] (05) edge[out=160,in=200] (06);
    \draw[-latex, red, dashed] (06) edge[out=0,in=0] (04);
    \draw[-latex, blue] (06) edge[out=340,in=20] (04);
    \draw[-latex, red, dashed] (24) edge[out=180,in=180] (25);
    \draw[-latex, blue] (24) edge[out=160,in=200] (25);
    \draw[-latex, red, dashed] (25) edge[out=180,in=180] (26);
    \draw[-latex, blue] (25) edge[out=160,in=200] (26);
    \draw[-latex, red, dashed] (26) edge[out=0,in=0] (24);
    \draw[-latex, blue] (26) edge[out=340,in=20] (24);
    \draw[-latex, red, dashed] (42) edge[out=180,in=180] (43);
    \draw[-latex, blue] (42) edge[out=160,in=200] (43);
    \draw[-latex, red, dashed] (43) edge[out=0,in=0] (42);
    \draw[-latex, blue] (43) edge[out=340,in=20] (42);
    \draw[-latex, red, dashed] (40) edge[out=180,in=180] (41);
    \draw[-latex, blue] (40) edge[out=160,in=200] (41);
    \draw[-latex, red, dashed] (41) edge[out=0,in=0] (40);
    \draw[-latex, blue] (41) edge[out=340,in=20] (40);
	\path[->,every loop/.style={looseness=10}] (07)
	         edge  [in=70,out=110,loop, blue] ();
	\path[->,every loop/.style={looseness=10}] (07)
			 edge  [in=60,out=120,loop, red, dashed] ();
    \draw[-latex, green!50!black, dotted, very thick] (40)--(27);
    \draw[-latex, green!50!black, dotted, very thick] (41)--(28);
    \draw[-latex, green!50!black, dotted, very thick] (42)--(27);
    \draw[-latex, green!50!black, dotted, very thick] (43)--(28);
    \draw[-latex, green!50!black, dotted, very thick] (46)--(26);
    \draw[-latex, green!50!black, dotted, very thick] (47)--(27);
    \draw[-latex, green!50!black, dotted, very thick] (27)--(07);
    \draw[-latex, green!50!black, dotted, very thick] (28)--(07);
    \draw[-latex, green!50!black, dotted, very thick] (45)--(27);
    \draw[-latex, green!50!black, dotted, very thick] (46)--(28);
    \draw[-latex, green!50!black, dotted, very thick] (47)--(27);
    \draw[-latex, green!50!black, dotted, very thick] (48)--(28);
    \draw[-latex, green!50!black, dotted, very thick] (49)--(27);
    \draw[-latex, green!50!black, dotted, very thick] (410)--(28);
    \draw[-latex, green!50!black, dotted, very thick] (24)--(04);
    \draw[-latex, green!50!black, dotted, very thick] (25)--(05);
    \draw[-latex, green!50!black, dotted, very thick] (26)--(06);
    \draw[-latex, green!50!black, dotted, very thick] (410)--(26);
    \draw[-latex, green!50!black, dotted, very thick] (49)--(25);
    \draw[-latex, green!50!black, dotted, very thick] (48)--(24);
    \draw[-latex, green!50!black, dotted, very thick] (47)--(26);
    \draw[-latex, green!50!black, dotted, very thick] (46)--(25);
    \draw[-latex, green!50!black, dotted, very thick] (45)--(24);
    \draw[-latex, green!50!black, dotted, very thick] (510)--(410);
    \draw[-latex, green!50!black, dotted, very thick] (59)--(49);
    \draw[-latex, green!50!black, dotted, very thick] (58)--(48);
    \draw[-latex, green!50!black, dotted, very thick] (57)--(47);
    \draw[-latex, green!50!black, dotted, very thick] (56)--(46);
    \draw[-latex, green!50!black, dotted, very thick] (55)--(45);
    \draw[-latex, green!50!black, dotted, very thick] (53)--(43);
    \draw[-latex, green!50!black, dotted, very thick] (52)--(42);
    \draw[-latex, green!50!black, dotted, very thick] (51)--(41);
    \draw[-latex, green!50!black, dotted, very thick] (50)--(40);
\end{tikzpicture}
\caption{} \label{figpic6}
\end{center}
\end{figure}

\begin{rmk}
Our definition of rank-3 Bratteli diagram relates to the rank-2 Bratteli diagrams
introduced by Pask, Raeburn, R{\o}rdam and Sims. Both constructions are based on Bratteli
diagrams as initial data, with the difference that here we construct 3-graphs rather than
2-graphs. See~\cite{PasRaeRorSim} for the details.
\end{rmk}

Given a $\TT$-valued 2-cocycle $c$ on $\ZZ^k$ and a $k$-graph $\Lambda$, we obtain a
$2$-cocycle $\dstar{}c$ on $\Lambda$ by $(\dstar{}c)(\mu,\nu) = c(d(\mu), d(\nu))$. An
example of a $\TT$-valued 2-cocycle on $\ZZ^k$ is the map
\begin{equation}\label{ck}
c^k_\theta(m,n) = e^{2\pi  \theta m_2 n_1}.
\end{equation}
The 2-cocycle (\ref{ck}) on $\ZZ^2$ will be denoted $c_\theta$. We let $A_\theta$ denote
the \emph{rotation $C^*$-algebra} corresponding to the angle $\theta\in \RR$ (see
\cite[Example~7.7]{KumPasSim3}).

Our main theorem describes the ordered $K$-theory of the twisted $C^*$-algebras of rank-3
Bratteli diagrams corresponding to irrational $\theta$. We state the result now, but the
proof will require some more preliminary work.

To state the theorem, we take the convention that given a direct sum $G = \bigoplus_i
G_i$ of groups and $g \in G$, we write $g\delta_i$ for the image of $g$ in the
$i$\textsuperscript{th} direct summand of $G$.

\begin{thm}
\label{cor5.8} Let $\Lambda=\Lambda_E$ be the rank-3 Bratteli diagram associated to a
singly connected weighted Bratteli diagram $E$, and take $\theta \in \RR \backslash \QQ$.
Let $c^3_\theta \in Z^2(\ZZ^3, \TT)$ be as in~\eqref{ck}, and let
$c=\dstar{}c^3_\theta\in Z^2(\Lambda,\TT)$. For $n \in \NN$, define $A_n: \oplus_{v \in
E_n^0} (\frac{1}{w(v)}\ZZ + \theta \ZZ)  \to \oplus_{u \in E^0_{n+1}} (\frac{1}{w(u)}\ZZ
+ \theta \ZZ)$ and $B_n\colon\oplus_{v \in E_n^0} \ZZ^2 \to \oplus_{u \in E_{n+1}^0}
\ZZ^2$ by
\begin{align*}
A_n\Big(\frac{1}{w(v)}p+\theta q\Big)\delta_v
    &=\sum_{e\in vE^1}\Big(\frac{1}{w(v)}p+\theta q\Big)\delta_{s(e)}\qquad\text{ and}\\
B_n(p, q)\delta_v
    &=\sum_{e\in vE^1}\Big(p+\Big(1-\frac{w(s(e))}{w(v)}\Big)q,
        \, \frac{w(s(e))}{w(v)}q\Big)\delta_{s(e)}.
\end{align*}
Endow $\varinjlim \Big(\bigoplus_{v\in E_n^0} (\frac{1}{w(v)}\ZZ+\theta\ZZ), A_n\Big)$ with
the positive cone and order inherited from the approximating subgroups. Then there are an
order isomorphism
\begin{align*}
h_0\colon K_0(C^*(\Lambda,c)) &\to \varinjlim
    \Big(\bigoplus_{v\in E_n^0} \Big(\frac{1}{w(v)}\ZZ+\theta\ZZ\Big), A_n\Big)
\intertext{and an isomorphism}
h_1\colon K_1(C^*(\Lambda,c)) &\to \varinjlim
            \Big(\bigoplus_{v\in E_n^0} \ZZ^2, B_n\Big)
\end{align*}
with the following properties: Let $v \in E_n^0$, let $i < w(v)$, let $\mu_i$ be the
unique element of $(v,i)\Lambda_v^{(w(v), 0)}$ and let $\nu_i$ be the unique element of
$(v,i)\Lambda_v^{(w(v) - 1, 1)}$. Then
\begin{align*}
\textstyle h_0\big(\big[s_{(v,i)}\big]\big)
    &=\textstyle \frac{1}{w(v)} \delta_v \in \bigoplus_{u\in E_n^0} (\frac{1}{w(u)}\ZZ+\theta\ZZ);\\
\textstyle h_1\big(\big[s_{\mu_i} + \sum_{j \not= i} s_{(v,j)}\big]\big)
    &=\textstyle (1,0)\delta_v \in \bigoplus_{u \in E_n^0} \ZZ^2;\text{ and}\\
\textstyle h_1\big(\big[s_{\nu_i} + \sum_{j \not= i} s_{(v,j)}\big]\big)
    &=\textstyle (0,1)\delta_v \in \bigoplus_{u \in E_n^0} \ZZ^2.
\end{align*}
There is an isomorphism $\theta\colon K_1(C^*(\Lambda, c)) \to K_0(C^*(\Lambda), c)$ such
that $h_0 \circ \theta \circ h_1^{-1}\big((a,b)\delta_v\big) = \frac{b}{w(v)} + (a +
b)\theta$ for all $v \in E^0$ and $(a,b) \in \ZZ^2$.
\end{thm}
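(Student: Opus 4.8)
The plan is to build the map directly on the two inductive-limit presentations furnished by $h_0$ and $h_1$, and then transport it back to $K$-theory. For each $n$ and each $v\in E_n^0$, define a homomorphism on the $v$-summand by
\[
\phi_n\big((a,b)\delta_v\big) = \Big(\tfrac{b}{w(v)} + (a+b)\theta\Big)\delta_v,
\]
and let $\phi_n\colon \bigoplus_{v\in E_n^0}\ZZ^2 \to \bigoplus_{v\in E_n^0}(\tfrac1{w(v)}\ZZ+\theta\ZZ)$ be the resulting direct sum; since $\tfrac{b}{w(v)}\in\tfrac1{w(v)}\ZZ$ and $(a+b)\theta\in\theta\ZZ$, this lands in the correct summand. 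If I can show that each $\phi_n$ is an isomorphism and that $A_n\circ\phi_n = \phi_{n+1}\circ B_n$, then the $\phi_n$ induce an isomorphism $\phi$ of the two direct limits, and $\theta := h_0^{-1}\circ\phi\circ h_1$ is the required map: by construction $h_0\circ\theta\circ h_1^{-1} = \phi$, which is exactly the asserted formula.

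To see that each $\phi_n$ is an isomorphism it suffices to treat a single $v$-summand. Because $\theta$ is irrational, $\tfrac1{w(v)}$ and $\theta$ are linearly independent over $\QQ$, so $\{\tfrac1{w(v)},\theta\}$ is a $\ZZ$-basis of $\tfrac1{w(v)}\ZZ+\theta\ZZ$, which is therefore free abelian of rank $2$. In this basis the map $(a,b)\mapsto \tfrac{b}{w(v)}+(a+b)\theta$ has matrix $\minimatrix{0}{1}{1}{1}$, of determinant $-1$, and so is an isomorphism. Hence $\phi_n$ is an isomorphism for every $n$, and the induced map $\phi$ on direct limits will automatically be one as well.

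The key step is the intertwining relation $A_n\circ\phi_n = \phi_{n+1}\circ B_n$, and this is where the weight-divisibility built into the rank-3 Bratteli diagram must be used carefully. Fix $(p,q)\delta_v$ with $v\in E_n^0$. Applying $B_n$ and then $\phi_{n+1}$, the $s(e)$-summand contributes
\[
\phi_{n+1}\Big(\big(p+(1-\tfrac{w(s(e))}{w(v)})q,\ \tfrac{w(s(e))}{w(v)}q\big)\delta_{s(e)}\Big)
    = \Big(\tfrac{q}{w(v)} + (p+q)\theta\Big)\delta_{s(e)},
\]
where the $\tfrac1{w(s(e))}\ZZ$-part simplifies because $\tfrac1{w(s(e))}\cdot\tfrac{w(s(e))}{w(v)}q = \tfrac{q}{w(v)}$, and the two $\tfrac{w(s(e))}{w(v)}q$ terms in the $\theta$-coefficient cancel, leaving $p+q$. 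On the other side, $\phi_n\big((p,q)\delta_v\big) = \big(\tfrac{q}{w(v)}+(p+q)\theta\big)\delta_v$, and since $w(v)=w(r(e))$ divides $w(s(e))$ this element already lies in $\tfrac1{w(s(e))}\ZZ+\theta\ZZ$, so $A_n$ merely copies it onto each $s(e)$-summand. Summing over $e\in vE^1$, both composites equal $\sum_{e\in vE^1}\big(\tfrac{q}{w(v)}+(p+q)\theta\big)\delta_{s(e)}$, establishing the relation.

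With the commuting squares in hand, the universal property of the direct limit yields an isomorphism $\phi$ between $\varinjlim\big(\bigoplus_v\ZZ^2,B_n\big)$ and $\varinjlim\big(\bigoplus_v(\tfrac1{w(v)}\ZZ+\theta\ZZ),A_n\big)$, and setting $\theta = h_0^{-1}\circ\phi\circ h_1$ completes the construction and verifies the formula on the generators $(a,b)\delta_v$. I expect the only delicate point to be the bookkeeping in the intertwining computation, ensuring the factors $\tfrac{w(s(e))}{w(v)}$ cancel exactly as above; the isomorphism claim and the passage to the limit are then formal.
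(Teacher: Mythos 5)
Your argument addresses only the final sentence of the theorem --- the existence of the isomorphism $\theta\colon K_1 \to K_0$ satisfying $h_0\circ\theta\circ h_1^{-1}\big((a,b)\delta_v\big) = \frac{b}{w(v)}+(a+b)\theta$ --- and for that piece it is correct and essentially identical to the paper's own argument: the paper conjugates by $T_n = \bigoplus_v\minimatrix{0}{1}{1}{1}$ and verifies the matrix identity $\minimatrix{0}{1}{1}{1}\minimatrix{1}{1-l}{0}{l} = \minimatrix{l}{0}{0}{1}\minimatrix{0}{1}{1}{1}$, which is exactly your intertwining computation $A_n\circ\phi_n=\phi_{n+1}\circ B_n$ written in coordinates. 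Your observation that divisibility of $w(r(e))$ by $w(s(e))$ makes the cancellation work is the right bookkeeping.

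The genuine gap is everything else: you take ``the two inductive-limit presentations furnished by $h_0$ and $h_1$'' as given, but the existence of $h_0$ and $h_1$ with the stated properties \emph{is} the substance of the theorem, and it does not follow from anything in your proposal. Establishing it requires (i) the Morita equivalence of $C^*(\Lambda,c)$ with an inductive limit of direct sums of matrix algebras over the $C^*(\Lambda_v,c_v)$ (Theorem~\ref{thm5.6}); (ii) the identification of each building block $C^*(\Lambda_v,\dstar{}c^3_\theta)$ with $A_{w(v)\theta}\otimes M_{w(v)}(\CC)$, whence $K_0 \cong \frac{1}{w(v)}\ZZ+\theta\ZZ$ and $K_1\cong\ZZ^2$ via Pimsner--Voiculescu (Lemma~\ref{lem5.7}); (iii) the computation that the connecting maps induced by the inclusions $\iota_e$ are $\minimatrix{1}{1-l}{0}{l}$ on $K_1$ and the inclusion $\frac1n\ZZ+\theta\ZZ\subseteq\frac1m\ZZ+\theta\ZZ$ on $K_0$, the latter via uniqueness of the trace (Lemmas \ref{lem5.11}~and~\ref{lem5.12}); and (iv) the identification of the positive cone using the order isomorphism $K_0(\tau)$ (Corollary~\ref{cor5.13}). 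The explicit formulas for $h_0\big([s_{(v,i)}]\big)$ and $h_1\big([s_{\mu_i}+\sum_{j\neq i}s_{(v,j)}]\big)$ also require tracking specific classes through these identifications. Without these steps the theorem is assumed rather than proved; you should either supply them or state clearly that you are only proving the final assertion conditional on the rest.
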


\begin{rmk}\label{rmk:matrices of maps}
If we regard the maps $A_n\colon\oplus_{v \in E_n^0} (\frac{1}{w(v)}\ZZ + \theta \ZZ)  \to
\oplus_{u \in E^0_{n+1}} (\frac{1}{w(u)}\ZZ + \theta \ZZ)$ as $E^0_n \times E^0_{n+1}$
matrices of homomorphisms $A_n(v,u)\colon\frac{1}{w(v)}\ZZ + \theta \ZZ  \to
\frac{1}{w(u)}\ZZ + \theta \ZZ$, then each $A_n(v,w)$ is the inclusion map
$\frac{1}{w(v)}\ZZ + \theta \ZZ  \subseteq \frac{1}{w(u)}\ZZ + \theta \ZZ$. Likewise, if
we think of each $B_n$ as an $E^0_n \times E^0_{n+1}$ matrix of homomorphisms $B_n(v,u) :
\ZZ^2 \to \ZZ^2$, then writing $l(v,u) := w(u)/w(v)$, each $B_n(v,u)$ is implemented by
multiplication by the matrix $\left(\begin{smallmatrix} 1 & 1 - l(v,u) \\ 0 &
l(v,u)\end{smallmatrix}\right)$.
\end{rmk}

\begin{rmk}
In the preceding theorem, given $v \in E_n^0$, it is not so easy to specify explicitly
the projection in $C^*(\Lambda, c)$ which maps to $\theta \delta_v \in \bigoplus_{u\in
E_n^0} (\frac{1}{w(u)}\ZZ+\theta\ZZ)$. However, the description of the connecting maps in
$K_0$ in Lemma~\ref{lem5.12} yields the following description: Lemma~\ref{lem5.7}
describes an isomorphism $\phi\colon A_{w(v)\theta}\otimes M_{w(v)}(\CC) \cong
C^*(\Lambda_v,c)$, and if $p_\theta$ denotes the Rieffel projection in $A_{w(v)\theta}$,
then $h_0$ carries the $K_0$-class of $\phi(p_\theta \oplus 0_{w(v)-1})$ to
$\theta\delta_v$.
\end{rmk}

\begin{rmk}\label{rmk:easycocycles}
Theorem~\ref{cor5.8} applies only when $c=\dstar{}c^3_\theta\in Z^2(\Lambda,\TT)$ for
some $\theta$. But Theorem~\ref{cor2.6} suggests that this is a fairly mild hypothesis.
To make this precise, first suppose that $v,w \in E_1^0$ have the property that there
exist $\alpha \in vE^*$ and $\beta \in wE^*$ with $s(\alpha) = s(\beta)$. By choosing any
infinite path $x$ in $E$ with range $s(\alpha)$ we can pick out covering systems
corresponding to $\alpha x$ and to $\beta x$, and then Theorem~\ref{cor2.6} implies that
$c_v$ and $c_w$ are cohomologous. Now consider any connected component $C$ of $E$. An
induction using what we have just showed proves that the $c_v$ corresponding to vertices
$v$ in $C$ are all cohomologous. So, decomposing, $E$ into connected components, we see
that $C^*(\Lambda_E, c)$ is a direct sum of subalgebras $C^*(\Lambda_{C}, c)$ in which $v
\mapsto c_v$ is constant up to cohomology.

Each $\Lambda_v$ is the quotient of the $2$-graph $\Delta_2$ (see
\cite[Examples~2.2(5)]{KumPasSim3}) by the canonical action of $\{(m,n) : m+n \in
w(v)\ZZ\} \le \ZZ^2$, and so \cite[Theorem~4.9]{KumPasSim3} shows that $H_2(\Lambda_v,
\TT) \cong \TT$; in particular $\{[\dstar{}c_\theta] : \theta \in [0,2\pi)\}$ is all of
$H_2(\Lambda_v, \TT)$. So for each connected component $C$ of $E$, there is some $\theta$
such that $c_v \sim \dstar{}c_\theta$ for each vertex $v$ in $C$.
\end{rmk}

The first step to proving Theorem~\ref{cor5.8} is to describe the building blocks in the
direct-limit decomposition of Theorem~\ref{thm5.6} for a rank-3 Bratteli diagram. Recall
that if $E$ is a singly connected weighted Bratteli diagram and $n = w(v)$, then
$\Lambda_v = T_2^v \times_1  \ZZ / n\ZZ$ as illustrated below:
\begin{equation}\label{eq:doublecycle}
\parbox{5cm}{
\begin{tikzpicture}[scale=1.5]
    \node[inner sep=0.5pt, circle] (1) at (0,3) {\small$0$};
    \node[inner sep=0.5pt, circle] (2) at (0,2) {\small$1$};
    \node[inner sep=0.5pt, circle] (3) at (0,1) {\small$2$};
    \node[inner sep=0.5pt, circle] (4) at (0,-0.5) {\small$n-1$};
    \draw[-latex, red, dashed] (2) edge[out=180,in=180] node[pos=0.5, left, black] {\tiny{$(a,0)$}} (1);
    \draw[-latex, blue] (2) edge[out=160,in=200] node[pos=0.5, right, black] {\tiny{$(b,0)$}} (1);
    \draw[-latex, red, dashed] (3) edge[out=180,in=180] node[pos=0.5, left, black] {\tiny{$(a,1)$}}(2);
    \draw[-latex, blue] (3) edge[out=160,in=200] node[pos=0.5, right, black] {\tiny{$(b,1)$}} (2);
    \node at (0,0.25) {$\vdots$};
    \draw[-latex, red, dashed] (1) edge[out=340,in=20] node[pos=0.63, right, black] {\tiny{$(a,n-1)$}} (4);
    \draw[-latex, blue] (1) edge[out=330,in=30] node[pos=0.65, left, black] {\tiny{$(b,n-1)$}} (4);
\end{tikzpicture}}
\end{equation}
When $n=1$ the $C^*$-algebra $C^*(\Lambda_v,\dstar{}c^3_\theta)$ is isomorphic to the irrational
rotation algebra $A_\theta$ (see \cite{KumPasSim3}). We prove that, in general,
$C^*(\Lambda_v, \dstar{}c^3_\theta)$ is isomorphic to $M_n(A_{n\theta})$.

\begin{lemma}\label{lem5.7}
Let $\Lambda$ be the 2-graph $T_2 \times_1  \ZZ / n\ZZ$ of~\eqref{eq:doublecycle}. Let $c
= \dstar{}c_\theta$ for $\theta\in \RR$. Let $u,v$ denote the generators for $A_{n\theta}$,
and $(\zeta_{i,j})$ the standard matrix units for $M_{n}(\CC)$. Let ${\mu_{i}}$ (resp.
${\nu_{i}}$) denote the unique element in $\Lambda$ of degree $(n,0)$ (resp. $(n-1,1)$)
with source and range $(v,i)$, and let $\alpha_0, \dots, \alpha_{n-1}$ be any elements in
$\TT$ such that $e^{2\pi i\theta}\alpha_{i-1}=\alpha_i$. Then there is an isomorphism
\[
    \phi\colon A_{n\theta}\otimes M_{n}(\CC)\cong C^*(\Lambda,c)
\]
such that
\[
\phi(u\otimes 1_{n}) = \sum_{i=0}^{n-1} s_{\mu_{i}}, \quad
   \phi(v\otimes 1_{n})=\sum_{i=0}^{n-1} \alpha_{i}s_{\nu_{i}},\quad\text{and}\quad
   \phi(1\otimes \zeta_{j,j+1}) = s_{(a,j-1)}.
\]
\end{lemma}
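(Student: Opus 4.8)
The plan is to build $\phi$ directly from the universal property of $A_{n\theta}\otimes M_n(\CC)\cong M_n(A_{n\theta})$: to specify a unital homomorphism out of this algebra it suffices to exhibit in $C^*(\Lambda,c)$ a unitary pair satisfying the rotation relation for angle $n\theta$, a system of $n\times n$ matrix units, and to check that the two families commute. I would set $U:=\sum_j s_{\mu_j}$, $V:=\sum_j\alpha_j s_{\nu_j}$, and define the $E_{j,k}$ from the superdiagonal $E_{j,j+1}:=s_{(a,j-1)}$ ($1\le j\le n-1$). Since paths of a fixed degree in $T_2\times_1\ZZ/n\ZZ$ are unique, $(v,j)\Lambda^{(n,0)}=\{\mu_j\}$ and $(v,j)\Lambda^{(n-1,1)}=\{\nu_j\}$ are singletons; with (CK3)--(CK4) this gives unitarity of $U$ and $V$, and also $\mu_j\nu_j=\nu_j\mu_j$. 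The rotation relation then collapses to the two cocycle values $c(\mu_j,\nu_j)=c_\theta((n,0),(n-1,1))=1$ and $c(\nu_j,\mu_j)=c_\theta((n-1,1),(n,0))=e^{2\pi i n\theta}$, which yield $VU=e^{2\pi i n\theta}\,UV$. That the $E_{j,k}$ are matrix units with $E_{j,j}=s_{(v,j-1)}$ and $\sum_j E_{j,j}=1$ follows from (CK2)--(CK4) together with the triviality of $c$ on composable pairs of $a$-edges.

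The one genuinely informative relation is the commutation of $U,V$ with the matrix units. Here $U\,s_{(a,j-1)}=s_{(a,j-1)}\,U$ reduces, via uniqueness of paths, to $c(\mu_{j-1},(a,j-1))=c((a,j-1),\mu_j)=1$, while $V\,s_{(a,j-1)}=s_{(a,j-1)}\,V$ reduces to $\alpha_{j-1}\,c(\nu_{j-1},(a,j-1))=\alpha_j\,c((a,j-1),\nu_j)$, that is, to $\alpha_{j-1}e^{2\pi i\theta}=\alpha_j$ --- which is exactly the hypothesis placed on the $\alpha_j$. This is the point at which that hypothesis is used, and it delivers the homomorphism $\phi$ with the stated values.

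For surjectivity I would recover all generators of $C^*(\Lambda,c)$. The matrix units already supply every vertex projection $s_{(v,j)}$ and every $a$-edge $s_{(a,j)}$ with $j\le n-2$; the remaining wrap-around edge is $s_{(a,n-1)}=U\,E_{n,1}$ (using $U\,E_{n,n}=s_{\mu_{n-1}}$ and $s_{\mu_{n-1}}=s_{(a,n-1)}E_{1,n}$). Each $b$-edge is then extracted from $V$ by multiplying by adjoints of $a$-edges, since every $\nu_j$ factors into $a$-edges and a single $b$-edge and all $a$-edges now lie in the image. By (CK2) every $s_\lambda$ is a scalar times a product of such generators, so $\phi$ is onto.

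The main obstacle is injectivity, for which I would produce an inverse and appeal to the gauge-invariant uniqueness theorem. Define a candidate Cuntz--Krieger $(\Lambda,c)$-family in $A_{n\theta}\otimes M_n(\CC)$ by $t_{(v,j)}=1\otimes\zeta_{j+1,j+1}$, $t_{(a,j)}=1\otimes\zeta_{j+1,j+2}$ for $j\le n-2$, $t_{(a,n-1)}=u\otimes\zeta_{n,1}$, and each $t_{(b,j)}$ a scalar multiple of $vu^*\otimes\zeta_{\cdot,\cdot}$, the scalars being forced by the twisted commuting-square relation $t_{(b,j)}t_{(a,j+1)}=e^{2\pi i\theta}t_{(a,j)}t_{(b,j+1)}$ that $c$ imposes on each mixed $a$/$b$ square (the inverse-direction counterpart of the $\alpha_j$-condition). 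Checking that $\{t_\lambda\}$ satisfies (CK1)--(CK4), in particular the twisted factorisation relation with its cocycle bookkeeping on the mixed squares, is the delicate step; granting it, the universal property of $C^*(\Lambda,c)$ gives $\psi\colon C^*(\Lambda,c)\to A_{n\theta}\otimes M_n(\CC)$ with $\psi(s_\lambda)=t_\lambda$. I would then define a $\TT^2$-action $\beta$ by $\beta_z(u)=z_1^n u$, $\beta_z(v)=z_1^{n-1}z_2 v$, $\beta_z(\zeta_{j,k})=z_1^{k-j}\zeta_{j,k}$, verify that it respects the defining relations and that $\psi$ intertwines it with the gauge action on $C^*(\Lambda,c)$, and note $\psi(s_{(v,j)})\ne 0$. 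The gauge-invariant uniqueness theorem then forces $\psi$ injective, hence an isomorphism, and the computations $\psi(U)=u\otimes 1$, $\psi(V)=v\otimes 1$, $\psi(E_{j,k})=1\otimes\zeta_{j,k}$ identify $\psi$ with $\phi^{-1}$, completing the proof.
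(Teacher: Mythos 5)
Your construction of $\phi$ coincides with the paper's: the same unitaries $U=\sum_i s_{\mu_i}$ and $V=\sum_i\alpha_i s_{\nu_i}$, the same matrix units built from the superdiagonal $s_{(a,j-1)}$, the same cocycle computations $c(\mu_i,\nu_i)=1$ and $c(\nu_i,\mu_i)=e^{2\pi in\theta}$ giving $VU=e^{2\pi in\theta}UV$, and the same surjectivity argument recovering $s_{(a,n-1)}=UE_{n,1}$ and the $b$-edges from $V$ (this is the paper's display~\eqref{gen5.1}). You are also right, and more explicit than the paper, about where the hypothesis $e^{2\pi i\theta}\alpha_{j-1}=\alpha_j$ is used: it is exactly what makes $V$ commute with the matrix units, a point the paper buries in ``straightforward calculations.'' Where you genuinely diverge is injectivity. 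The paper observes that the formulas~\eqref{gen5.1} constitute a Cuntz--Krieger $\phi$-representation in the sense of \cite[Definition~7.4]{KumPasSim3}, so the universal property of $C^*_\phi(\Lambda)$ hands back an inverse for $\phi$ (with a footnote that for irrational $\theta$ simplicity of $A_{n\theta}\otimes M_n(\CC)$ makes injectivity automatic). You instead build an explicit Cuntz--Krieger $(\Lambda,c)$-family $\{t_\lambda\}$ in $A_{n\theta}\otimes M_n(\CC)$, verify it is gauge-covariant for a $\TT^2$-action with $\beta_z(u)=z_1^nu$, $\beta_z(v)=z_1^{n-1}z_2v$, $\beta_z(\zeta_{j,k})=z_1^{k-j}\zeta_{j,k}$, and invoke the gauge-invariant uniqueness theorem \cite[Theorem~3.15]{SimWhiWhi}. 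That route is valid and self-contained relative to this paper's toolkit; the twisted square relation you identify, $t_{(b,j)}t_{(a,j+1)}=e^{2\pi i\theta}t_{(a,j)}t_{(b,j+1)}$, is the correct one since $c_\theta((0,1),(1,0))=e^{2\pi i\theta}$ while $c_\theta((1,0),(0,1))=1$. The one piece you defer (``granting it'') is the verification of (CK1)--(CK4) for $\{t_\lambda\}$; this is mechanical but is the actual content of the inverse direction, so in a full writeup it should be carried out or at least reduced to the finitely many edge relations, exactly as you indicate. The trade-off: the paper's argument is shorter but leans on the $C^*_\phi$ formalism of \cite{KumPasSim3}; yours avoids that machinery and treats all real $\theta$ uniformly without the simplicity shortcut.
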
	
\begin{proof}
Define elements $U, V$ and $e_{j, j+1}$, $j < n$ of $C^*(\Lambda,c)$ by
\[
U = \sum_{i=0}^{n-1} s_{\mu_{i}}, \quad
    V = \sum_{i=0}^{n-1} \alpha_{i}s_{\nu_{i}}
    \quad\text{ and } e_{j,j+1}=s_{(a,j-1)}
\]
The set $\{e_{j,j+1}: j=1,\dots,n-1\}$ generates a system of matrix units
$(e_{i,j})_{i,j=1,\dots,n}$. Each $e_{i,j}$ is non-zero by \cite[Theorem
3.15]{RaeSimYee}. Straightforward calculations show that $U$ and $V$ are both unitaries,
and that these unitaries commute with the $e_{j,j+1}$ and their adjoints.

We claim that $VU=e^{2\pi i n\theta}UV$. Since $s(\mu_i) = r(\nu_j)$ only for $i = j$, we
have
\begin{align*}
UV &= \Big(\sum_{i=0}^{n-1} s_{\mu_{i}}\Big)\Big(\sum_{i=0}^{n-1} \alpha_{i}s_{\nu_{i}}\Big)
    = \sum_{i=0}^{n-1} \alpha_{i} c(\mu_{i},\nu_{i}) s_{\mu_{i}\nu_{i}}\quad\text{ and similarly}\\
VU &= \Big(\sum_{i=0}^{n-1} \alpha_{i}s_{\nu_{i}}\Big)\Big(\sum_{i=0}^{n-1} s_{\mu_{i}}\Big)
    =\sum_{i=0}^{n-1} \alpha_{i} c(\nu_{i},\mu_{i}) s_{\nu_{i}\mu_{i}}.
\end{align*}
Since $c(\mu_{i},\nu_{i}) = c_\theta\big((n, 0), (n-1,1)\big)=1$ and $c(\nu_{i},\mu_{i})
= c_\theta\big((n-1,1),(n, 0)\big) = e^{2\pi i n\theta}$, we obtain $VU = e^{2\pi i
n\theta}UV$ as claimed.

By definition the elements $U, V, e_{i,j}$ (and $\{s_\lambda : \lambda\in\Lambda\}$)
belong to the algebra generated by the elements $\{s_{(a,0)}, \dots, s_{(b,n-1)}\}$.
Conversely, for $j < n$ we have
\begin{equation}\label{gen5.1}
s_{(a,j)} = \begin{cases}
    U e_{n,1} &\text{ if $j = n-1$}\\
    e_{j+1,j+2}, & \text{ otherwise,}
    \end{cases}\quad\text{ and }\quad
s_{(b,j)} = \begin{cases}
    V e_{n,1} &\text{ if $j = n-1$}\\
    e^{-2\pi i\theta}U^*Ve_{j+1,j+2} &\text{ otherwise.}
    \end{cases}
\end{equation}
Hence $C^*(\Lambda_v)$ is generated by $U, V$ and the $e_{i,j}$.

The universal property of $A_{n\theta} \otimes M_n(\CC)$ gives a surjective homomorphism
$\phi\colon A_{n\theta} \otimes M_{n}(\CC) \to C^*(\Lambda,c)$ such that
\[
\phi(u\otimes 1_n) = U, \quad \phi(v\otimes 1_n) = V,\quad\text{ and }\quad
    \phi(1\otimes \zeta_{i,j})=e_{i,j}.
\]
The formulas~\eqref{gen5.1} describe a Cuntz-Krieger $\phi$-representation in the sense
of \cite[Definition 7.4]{KumPasSim3}, so the universal property of $C^*_\phi(\Lambda)$
gives an inverse for $\phi$, and so $\phi$ is an isomorphism\footnote{Of course, if
$\theta$ is irrational then $A_\theta$ and hence $A_\theta \otimes M_n(\CC)$ is simple,
and so $\phi$ is automatically injective.}. Hence $A_{n\theta}\otimes M_{n}(\CC)\cong
C^*_\phi(\Lambda)\cong C^*(\Lambda, c)$ by \cite[Corollary 5.7]{KumPasSim2}.
\end{proof}

\begin{lemma}
\label{lem5.11} Let $\Lambda=\Lambda_E$ be the rank-3 Bratteli diagram associated to a
singly connected weighted Bratteli diagram $E$ together with a compatible collection
$(c_v)$ of 2-cocycles. Take $e \in E^1$, and suppose that $c_{r(e)} = \dstar{}c_\theta$
and $c_{s(e)} = \dstar{}c_\theta$ where $\theta\in \RR\backslash\QQ$. Let $n=w(r(e))$,
$m=w(s(e))$ and $l = m/n$. Let $\iota_e \colon C^*(\Lambda_{r(e)}, c_{r(e)}) \to
C^*(\Lambda_{s(e)}, c_{s(e)})$ be as in Lemma~\ref{lem5.5}, and let $\phi_{r} \colon
A_{n\theta}\otimes M_{n}(\CC) \to C^*(\Lambda_{r(e)}, c_{r(e)})$ and $\phi_{s} \colon
A_{m\theta}\otimes M_{m}(\CC) \to C^*(\Lambda_{s(e)}, c_{s(e)}) $ be the isomorphisms
obtained from Lemma~\ref{lem5.7}. Let $u_r$ and $v_r$ be the generators of $A_{n\theta}$,
and let $\rho_r \colon K_1(A_{n\theta} \otimes M_{n}(\CC)) \to \ZZ^2$ be the isomorphism
such that $\rho_r([u_r \oplus 1_{n-1}]) = \minivector{1}{0}$ and $\rho_r([v_r \oplus
1_{n-1}]) = \minivector{0}{1}$; define $u_s, v_s$ and $\rho_s \colon K_1(A_{m\theta}
\otimes M_{m}(\CC)) \to \ZZ^2$ similarly. Then the diagram
\begin{equation}\label{eq:K1cd}
\xymatrix{K_1(C^*(\Lambda_{r(e)}, c_{r(e)})) \ar[r]^{K_1(\iota_e)}
    & K_1(C^*(\Lambda_{s(e)}, c_{s(e)}))\\
K_1(A_{n\theta}\otimes M_{n}(\CC)) \ar[d]^{\rho_r} \ar[u]_{K_1(\phi_{r})}
    & K_1(A_{m\theta}\otimes M_{m}(\CC)) \ar[d]^{\rho_s} \ar[u]_{K_1(\phi_{s})}\\
\ZZ^2 \ar[r]^{\minimatrix{1}{1-l}{0}{l}} & \ZZ^2}
\end{equation}
commutes.
\end{lemma}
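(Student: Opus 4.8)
The plan is to verify that the composite
$\rho_s\circ K_1(\phi_{s})^{-1}\circ K_1(\iota_e)\circ K_1(\phi_{r})\circ\rho_r^{-1}\colon\ZZ^2\to\ZZ^2$
agrees with $\minimatrix{1}{1-l}{0}{l}$; since both sides are homomorphisms it is enough to evaluate them on the generators $\minivector{1}{0}=\rho_r([u_r\oplus 1_{n-1}])$ and $\minivector{0}{1}=\rho_r([v_r\oplus 1_{n-1}])$. The central device is that $\iota_e$ is much easier to apply to the honest unitaries $U_r:=\phi_r(u_r\otimes 1_n)=\sum_i s_{\mu_i}$ and $V_r:=\phi_r(v_r\otimes 1_n)=\sum_i\alpha_i s_{\nu_i}$ of Lemma~\ref{lem5.7} than to the corner unitaries $u_r\oplus 1_{n-1}$: because $[u_r\otimes 1_n]=n[u_r\oplus 1_{n-1}]$ in $K_1$ and each $K_1(A_{k\theta}\otimes M_k)\cong\ZZ^2$ is torsion free, computing $K_1(\iota_e)[U_r]$ and $K_1(\iota_e)[V_r]$ and then dividing by $n$ recovers the images of the two generators.

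First I would introduce two shift unitaries in $B_s:=C^*(\Lambda_{s(e)},c_{s(e)})$, namely $W_a=\sum_{k=0}^{m-1}s_{(a,k)}$ and $W_b=\sum_{k=0}^{m-1}s_{(b,k)}$. Since $\dstar{}c_\theta$ depends only on degrees, the cocycle values among $a$-edges, among $b$-edges, and of an $a$-path followed by a $b$-edge are all trivial (as in the calculation in Lemma~\ref{lem5.7}), so powers and products of $W_a,W_b$ telescope into sums of path generators with no scalar corrections. In particular $W_a^m=\sum_k s_{\tilde\mu_k}=U_s$ (the degree-$(m,0)$ loops) and $W_a^n=\sum_k s_{\mu'_k}$, where $\mu'_k$ is the degree-$(n,0)$ path at $(s(e),k)$. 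On the other hand, unwinding the covering map of Definition~\ref{def5.6} shows that $p_e^{-1}(\mu_i)$ is precisely the set of $l$ paths $\mu'_j$ with $j\equiv i\pmod n$, so $\iota_e(U_r)=\sum_{j=0}^{m-1}s_{\mu'_j}=W_a^n$. From $W_a^m=U_s=\phi_s(u_s\otimes 1_m)$ and $[u_s\otimes 1_m]=m[u_s\oplus 1_{m-1}]$ we get $m[W_a]=m\,K_1(\phi_s)([u_s\oplus 1_{m-1}])$, hence $[W_a]=K_1(\phi_s)([u_s\oplus 1_{m-1}])$ by torsion-freeness. Then $K_1(\iota_e)[U_r]=[W_a^n]=n[W_a]$ gives, after dividing by $n$, the identity $K_1(\iota_e)K_1(\phi_r)([u_r\oplus 1_{n-1}])=K_1(\phi_s)([u_s\oplus 1_{m-1}])$, i.e.\ the first column $\minivector{1}{0}$.

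For the second generator I would treat $V_r$ the same way. The preimages of $\nu_i$ are the $l$ degree-$(n-1,1)$ paths $\nu'_j$ with $j\equiv i\pmod n$, and factoring each as $\nu'_j=(a,j)\cdots(a,j+n-2)(b,j+n-1)$ shows $\sum_j s_{\nu'_j}=W_a^{n-1}W_b$; hence $\iota_e(V_r)=\Phi\,W_a^{n-1}W_b$, where $\Phi=\sum_j\alpha_{j\bmod n}\,s_{(s(e),j)}$ is a scalar diagonal unitary supported on the vertex projections. Under $\phi_s^{-1}$ the element $\Phi$ lies in $1\otimes M_m(\CC)$, whose unitary group is connected, so $[\Phi]=0$ and $[\iota_e(V_r)]=(n-1)[W_a]+[W_b]$. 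The identical factorisation of $V_s$ itself gives $V_s=\Psi\,W_a^{m-1}W_b$ with $\Psi$ scalar diagonal, so $m\,K_1(\phi_s)([v_s\oplus 1_{m-1}])=[V_s]=(m-1)[W_a]+[W_b]$, whence $[W_b]=m\tilde v_s-(m-1)\tilde u_s$ with $\tilde u_s:=K_1(\phi_s)([u_s\oplus 1_{m-1}])$ and $\tilde v_s:=K_1(\phi_s)([v_s\oplus 1_{m-1}])$. Substituting and using $m=ln$ yields $[\iota_e(V_r)]=(n-m)\tilde u_s+m\tilde v_s=n\big((1-l)\tilde u_s+l\tilde v_s\big)$; dividing by $n$ gives $K_1(\iota_e)K_1(\phi_r)([v_r\oplus 1_{n-1}])=(1-l)\tilde u_s+l\tilde v_s$, i.e.\ the second column $\minivector{1-l}{l}$. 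The two columns assemble into $\minimatrix{1}{1-l}{0}{l}$, proving that the square commutes.

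The bookkeeping I expect to be most delicate is twofold: reading off $p_e^{-1}(\mu_i)$ and $p_e^{-1}(\nu_i)$ correctly from Definition~\ref{def5.6} --- in particular that the $a^n$-lifts are \emph{not} loops in $\Lambda_{s(e)}$ but shift the $\ZZ/m\ZZ$-label by $n$, which is exactly what produces the factor $l=m/n$ --- and checking that every cocycle correction in the telescoping products $W_a^{\,j}$ and $W_a^{\,j}W_b$ vanishes, so that $\iota_e(U_r)$ and $\iota_e(V_r)$ really are clean powers and products of the shifts up to the scalar diagonal $\Phi$. The repeated divisions by $n$ and $m$ are harmless since $K_1$ of each building block is the torsion-free group $\ZZ^2$.
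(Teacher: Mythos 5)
Your proposal is correct and follows essentially the same route as the paper: both arguments compute $K_1(\iota_e)$ on the classes of the explicit unitaries $\phi_r(u_r\otimes 1_n)=\sum_i s_{\mu^r_i}$ and $\phi_r(v_r\otimes 1_n)=\sum_i\alpha_i s_{\nu^r_i}$ from Lemma~\ref{lem5.7} via the preimage formula for $\iota_e$, and then divide by $n$ (respectively $m$) using that $K_1\cong\ZZ^2$ is torsion-free. The only cosmetic difference is that you express everything in the basis $[W_a],[W_b]$ of single-edge shift unitaries, whereas the paper works directly with $\iota_e(U_r)^l=U_s$ and the product $W_r U_r^{l-1}$ and isolates the $v$-generator by subtraction; the bookkeeping is equivalent.
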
	
\begin{proof}
Recall $n=w(r(e))$, $m=w(s(e))$, and $l = m/n$. Since the maps $a\mapsto a\oplus 1_{n-1}$
and $b \mapsto b \oplus 1_{m-1}$ induce isomorphisms in $K_1$, the classes $[u_r\oplus
1_{n-1}]$ and $[v_r\oplus 1_{n-1}]$ generate $K_1(A_{n\theta}\otimes M_{n}(\CC))$ and
$[u_s\oplus 1_{m-1}]$ and $[v_s\oplus 1_{m-1}]$ generate $K_1(A_{m\theta}\otimes
M_{m}(\CC))$.

We claim that $K_1(\iota_e)$ maps $[\phi_r(u_r\oplus 1_{n-1})]$ to $[\phi_s(u_s\oplus
1_{m-1})]$. To see this, for $i < n$, let $\mu^r_{i}$ denote the unique element of
$(r(e),i)\Lambda_{r(e)}^{(n,0)}$, and let
\[
    U_r := \sum_{i=0}^{n-1} s_{\mu^r_{i}} \in C^*(\Lambda_{r(e)},c_{r(e)}).
\]
By Lemma~\ref{lem5.7}, $U_r = \phi_r(u_r\otimes 1_n)$, and so
\[
U_r = \phi_r(u_r\otimes 1)
    = \prod^{n-1}_{i=0} \phi_r(1_i \oplus u_r \oplus 1_{n-i-1}).
\]
Hence $[U_r] = \sum^{n-1}_{i=0} [\phi_r(1_i \oplus u_r \oplus 1_{n-i-1})] =
n[\phi_r(u_r\oplus 1_{n-1})]$. An identical argument shows that if $\mu^s_{i}$ denotes the
unique element of $(s(e),i)\Lambda_{s(e)}^{(m,0)}$ for $i < m$, then $U_s :=
\sum_{i=0}^{m-1} s_{\mu^s_{i}}$ satisfies $[U_s] = m[\phi_s(u_s\oplus 1_{m-1})]$. Direct computation using that
$ln = m$ shows that $\iota_e$ maps $(\sum_{i=0}^{n-1}s_{\mu^r_{i}})^l$ to
$\sum_{i=0}^{m-1} s_{\mu^s_{i}}$ . We have
\[
m[\phi_r(u_r \oplus 1_{n-1})]
    = ln[\phi_r(u_r \oplus 1_{n-1})]
    =l[U_r]
    =\Big[\Big(\sum_{i=0}^{n-1} s_{\mu^r_{i}}\Big)^l\Big].
\]
Hence
\[
K_1(\iota_e)(m[\phi_r(u_r \oplus 1_{n-1})]) =
    \Big[\sum_{i=0}^{m-1} s_{\mu^s_{i}}\Big]
    =[U_s]
    =m[\phi_s(u_s \oplus 1_{m-1})].
\]
Since $\theta \in \RR \backslash \QQ$, we have $K_1(A_{n\theta}\otimes M_n(\CC))=\langle
[u_r \oplus 1_{n-1}],[v_r \oplus 1_{n-1}]\rangle \cong\ZZ^2$, and we deduce that
$K_1(\iota_e)$ maps $[\phi_r(u_r \oplus 1_{n-1})]$ to $[\phi_s(u_s \oplus 1_{m-1})]$.

We now show that $K_1(\iota_e)$ maps $[\phi_r(v_r \oplus 1_{n-1})]$ to $l[\phi_s(v_s
\oplus 1_{m-1})]-(l-1)[\phi_s(u_s \oplus 1_{m-1})]$. Let $\nu^r_{i}, \nu^s_j$ be the
unique elements of $(r(e),i)\Lambda_{r(e)}^{(n-1,1)}$ and
$(s(e),j)\Lambda_{s(e)}^{(m-1,1)}$ for each $i,j$. Let
$W_r=\sum_{i=0}^{n-1}s_{\nu^r_{i}}$. Fix $\alpha_0, \dots, \alpha_{n-1}$ in $\TT$ such
that $e^{2\pi i\theta}\alpha_{i-1}=\alpha_i$, and set $V_r=\sum_{i=0}^{n-1} \alpha_i
s_{\nu^r_{i}}$. Lemma~\ref{lem5.7} gives $V_r = \phi_{r}(v_r\otimes 1_n)$. Since $V_r =
W_r\cdot \sum_{i=0}^{n-1} \alpha_i s_{(r(e),i)}$, and since $[1]=[\sum_{i=0}^{n-1}
\alpha_i s_{(r(e),i)}]$ it follows that $n[\phi_r(v_r \oplus 1_{n-1})] = [V_r] = [W_r]$.
Similarly $m[\phi_s(v_s \oplus 1_{m-1})] = [\sum_{i=0}^{m-1}s_{\nu^s_{i}}]$, and
\begin{align*}
n[\phi_r(v_r \oplus 1_{n-1})]&{} + (l-1)n[\phi_r(u_r \oplus 1_{n-1})] \\
    &= [W_r]+(l-1)[U_r]
    = \Big[\Big(\sum_{i=0}^{n-1} s_{\nu^r_{i}}\Big)
            \Big(\sum_{i=0}^{n-1} s_{\mu^r_{i}}\Big)^{l-1}\Big]
    = n\Big[\sum_{i=0}^{n-1} s_{\nu^r_i(\mu^r_i)^{l-1}}\Big].
\end{align*}
For $j < n$, we have $p_e^{-1}(\nu^r_i(\mu^r_i)^{l-1}) = \{\nu^s_i, \nu^s_{i+n}, \dots,
\nu^s_{i + (l-1)n}\}$. Hence
\begin{align*}
K_1(\iota_e)\big(n[\phi_r(v_r \oplus 1_{n-1})]&{} + (l-1)n[\phi_r(u_r \oplus 1_{n-1})]\big)\\
    &= n \big[\iota_e(s_{\nu^r_i(\mu^r_i)^{l-1}})\big]
    = \Big[\sum_{i=0}^{m-1} s_{\nu^s_{i}}\Big]
    = m[\phi_s(v_s \oplus 1_{m-1})].
\end{align*}
Since $m = nl$, we deduce that $K_1(\iota_e)$ sends $[\phi_r(v_r \oplus 1_{n-1})] +
(l-1)[\phi_r(u_r \oplus 1_{n-1})]$ to $l[\phi_s(v_s \oplus 1_{m-1})]$. We saw above that
$K_1(\iota_e)\big((l-1)[\phi_r(u_r \oplus 1_{n-1})]\big) = (l-1)[\phi_s(u_s \oplus
1_{m-1})]$, so subtracting gives $K_1(\iota_e)\big([\phi_r(v_r \oplus 1_{n-1})]\big) =
l[\phi_s(v_s \oplus 1_{m-1})] - (l-1)[\phi_s(u_s \oplus 1_{m-1})]$. So the
diagram~\eqref{eq:K1cd} commutes as claimed.
\end{proof}

Let $T(A)$ denote the set of \emph{tracial states}, i.e., positive linear functionals
with the trace property and norm one, on a $C^*$-algebra $A$. For any $\tau\in T(A)$
there is a map $K_0(\tau)\colon K_0(A)\to \RR$ such that $K_0(\tau)([p] -
[q])=\sum_i\tau(p_{ii} - q_{ii})$ for any projections $p,q\in M_n(A)$. When $\tau = \Tr$,
the unique tracial state on $A_{\theta}\otimes M_k(\CC)$ for $\theta\in
\RR\backslash\QQ$, the map $K_0(\tau)$ is an order isomorphism of $K_0(A_\theta \otimes
M_k(\CC))$ onto $\frac{1}{k}\ZZ+\frac{\theta}{k}\ZZ$ \cite{PimVoi}.

\begin{lemma} \label{lem5.12}
Let $\Lambda=\Lambda_E$ be the rank-3 Bratteli diagram associated to a singly connected
weighted Bratteli diagram $E$ together with a compatible collection $(c_v)$ of
2-cocycles. Take $e \in E^1$ and suppose that $c_{r(e)}=\dstar{}c_\theta$ and
$c_{s(e)}=\dstar{}c_\theta$ where $\theta\in \RR\backslash\QQ$. Let $n=w(r(e))$ and
$m=w(s(e))$. Let $\iota_e \colon C^*(\Lambda_{r(e)}, c_{r(e)}) \to C^*(\Lambda_{s(e)},
c_{s(e)})$ be as in Lemma~\ref{lem5.5}, and let $\phi_{r} \colon A_{n\theta}\otimes
M_{n}(\CC) \to C^*(\Lambda_{r(e)}, c_{r(e)})$ and $\phi_{s} \colon A_{m\theta}\otimes
M_{m}(\CC) \to C^*(\Lambda_{s(e)}, c_{s(e)})$ be the isomorphisms obtained from
Lemma~\ref{lem5.7}. Let $\tau_r$ and $\tau_s$ be the unique tracial states on
$A_{n\theta}\otimes M_{n}(\CC)$ and $A_{m\theta}\otimes M_{m}(\CC)$. Then the diagram
\begin{equation}\label{eq:K0cd}
\xymatrix{K_0(C^*(\Lambda_{r(e)}, c_{r(e)})) \ar[r]^{K_0(\iota_e)}
    & K_0(C^*(\Lambda_{s(e)}, c_{s(e)}))\\
K_0(A_{n\theta}\otimes M_{n}(\CC)) \ar[d]^{K_0(\tau_r)} \ar[u]_{K_0(\phi_{r})}
    & K_0(A_{m\theta}\otimes M_{m}(\CC)) \ar[d]^{K_0(\tau_s)} \ar[u]_{K_0(\phi_{s})}\\
\frac{1}{n}\ZZ + \theta \ZZ \ar[r]^{\subseteq} & \frac{1}{m} \ZZ + \theta \ZZ}
\end{equation}
commutes.
\end{lemma}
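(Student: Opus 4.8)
The plan is to collapse the whole diagram to a single statement about traces and then let $K_0$-functoriality do the rest. Write $\overline{\tau}_r := \tau_r \circ \phi_r^{-1}$ and $\overline{\tau}_s := \tau_s \circ \phi_s^{-1}$ for the tracial states induced on $C^*(\Lambda_{r(e)}, c_{r(e)})$ and $C^*(\Lambda_{s(e)}, c_{s(e)})$ via the isomorphisms of Lemma~\ref{lem5.7}. Since the vertical maps $K_0(\phi_r), K_0(\phi_s)$ are isomorphisms, commutativity of~\eqref{eq:K0cd} is exactly the assertion
\[
K_0(\tau_s) \circ K_0(\phi_s)^{-1} \circ K_0(\iota_e) \circ K_0(\phi_r)
    = \iota \circ K_0(\tau_r),
\]
where $\iota$ is the inclusion $\frac{1}{n}\ZZ + \theta\ZZ \subseteq \frac{1}{m}\ZZ + \theta\ZZ$ (legitimate because $n \mid m$ forces $\frac{1}{n}\ZZ \subseteq \frac{1}{m}\ZZ$). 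The crux I would isolate is the identity $\overline{\tau}_s \circ \iota_e = \overline{\tau}_r$ of tracial states on $C^*(\Lambda_{r(e)}, c_{r(e)})$. Granting this, functoriality gives $K_0(\overline{\tau}_s) \circ K_0(\iota_e) = K_0(\overline{\tau}_r)$: indeed for a projection $p \in M_k(C^*(\Lambda_{s(e)}, c_{s(e)}))$ one has $K_0(\overline{\tau}_s \circ \iota_e)([p]) = \sum_i (\overline{\tau}_s \circ \iota_e)(p_{ii}) = K_0(\overline{\tau}_s)(K_0(\iota_e)[p])$. Substituting $K_0(\overline{\tau}_r) = K_0(\tau_r) \circ K_0(\phi_r)^{-1}$ and $K_0(\overline{\tau}_s) = K_0(\tau_s) \circ K_0(\phi_s)^{-1}$ and precomposing with $K_0(\phi_r)$ yields precisely the displayed equation, the two sides being equal real numbers with the left-hand value lying in $\frac{1}{n}\ZZ + \theta\ZZ$.

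To prove the crux identity, I would first observe that $\iota_e$ is \emph{unital}: by Lemma~\ref{lem5.5} it sends $s_\lambda \mapsto \sum_{p_e(\mu)=\lambda} s_\mu$, and the vertex fibres $p_e^{-1}(w)$ partition $\Lambda_{s(e)}^0$ (each is the coset $\{(w,j): j \equiv i \bmod n\}$ of size $l$), so $\iota_e\big(\sum_{w \in \Lambda_{r(e)}^0} s_w\big) = \sum_{w' \in \Lambda_{s(e)}^0} s_{w'}$, i.e. $\iota_e(1) = 1$. Consequently $\overline{\tau}_s \circ \iota_e$ is positive, has the trace property (because $\iota_e$ is a $*$-homomorphism and $\overline{\tau}_s$ is a trace), and satisfies $(\overline{\tau}_s \circ \iota_e)(1) = \overline{\tau}_s(1) = 1$; hence it is a tracial state on $C^*(\Lambda_{r(e)}, c_{r(e)})$. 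Since $\theta \in \RR \backslash \QQ$, the number $n\theta$ is irrational, so $A_{n\theta}$ is a simple unital $C^*$-algebra with a unique tracial state, and therefore so is $A_{n\theta} \otimes M_n(\CC) \cong C^*(\Lambda_{r(e)}, c_{r(e)})$. Uniqueness forces $\overline{\tau}_s \circ \iota_e = \overline{\tau}_r$, completing the argument.

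The main obstacle is really just the substantive trace check — verifying that $\iota_e$ is unital (the fibre-partition computation) and that the composite $\overline{\tau}_s \circ \iota_e$ inherits the trace property and norm one; once uniqueness of the tracial state is invoked, everything downstream is formal. An alternative, more computational route would be to evaluate both sides on the generators $[s_{(v,i)}]$ and the Rieffel-projection class of $K_0(A_{n\theta} \otimes M_n(\CC))$ using $K_0(\tau_r)([s_{(v,i)}]) = \tfrac{1}{n}$ and the fibre count $l = m/n$, but the uniqueness-of-trace argument avoids tracking the $\theta$-generator explicitly and is therefore the cleaner path.
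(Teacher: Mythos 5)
Your proof is correct and rests on the same key fact as the paper's: uniqueness of the tracial state on $A_{n\theta}\otimes M_n(\CC)$ forces $\tau_s\circ\phi_s^{-1}\circ\iota_e = \tau_r\circ\phi_r^{-1}$, after which everything is formal. The only organizational difference is that the paper verifies commutativity separately on the two generators $[1\oplus 0_{n-1}]$ and $[p_r\oplus 0_{n-1}]$ of $K_0(A_{n\theta}\otimes M_n(\CC))$ (using the Rieffel projection and $\Tr(p_r)=n\theta$), whereas you deduce it all at once from $K_0(\overline{\tau}_s\circ\iota_e)=K_0(\overline{\tau}_r)$; your version is a mild streamlining that also makes explicit the unitality of $\iota_e$, which the paper uses implicitly.
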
	
\begin{proof}
Define $l = m/n$. Let $p_r$ and $p_s$  denote the Powers-Rieffel projections in
$A_{n\theta}$ and $A_{m\theta}$ respectively. Since the maps $a\mapsto a\oplus 0_{n-1}$
and $b \mapsto b \oplus 0_{m-1}$ induce an isomorphisms in $K_0$, the elements $[1\oplus
0_{n-1}]$ and $[p_r\oplus 0_{n-1}]$ generate $K_0(A_{n\theta}\otimes M_{n}(\CC))$, and
$[1\oplus 0_{m-1}]$ and $[p_s\oplus 1_{m-1}]$ generate $K_0(A_{m\theta}\otimes
M_{m}(\CC))$. Then $K_0(\iota_e)$ maps $[\phi_r(1\oplus 0_{n-1})]$ to $l[\phi_s(1\oplus
0_{m-1})]$ because
\[
K_0(\iota_e)\big(n[\phi_r(1\oplus 0_{n-1})]\big)
    = K_0(\iota_e)\big([1_{C^*(\Lambda_{r(e)})}]\big)
    = [1_{C^*(\Lambda_{s(e)})}]
    = m[\phi_r(1\oplus 0_{m-1})],
\]
and $m = nl$.

We show that $K_0(\iota_e)$ maps $[\phi_r(p_r \oplus 0_{n-1})]$ to $[\phi_s(p_s \oplus
0_{m-1})]$. Let $\tilde\tau_r := \tau_r \circ \phi^{-1}_r$ and $\tilde\tau_s := \tau_s
\circ \phi^{-1}_s$ be the unique tracial states on $C^*(\Lambda_{r(e)}, c_{r(e)})$ and
$C^*(\Lambda_{s(e)}, c_{s(e)})$. Since $\iota_e(C^*(\Lambda_{r(e)}, c_{r(e)})) \subseteq
C^*(\Lambda_{s(e)}, c_{s(e)})$ uniqueness of $\tilde\tau_r$ implies that $\tilde\tau_s
\circ \iota_e = \tilde\tau_r$, and since the unique tracial state $\Tr$ on $A_{n\theta}$
satisfies $\Tr(p_r) = n\theta$, we have $\tau_r(p_r\oplus 0_{n-1}) = \theta$. Hence
\[
\tilde\tau_s\circ \iota_e(\phi_r(p_r\oplus 0_{n-1}))
    =\tilde\tau_r(\phi_r(p_r\oplus 0_{n-1}))
    =\tau_r(p_r \oplus 0_{n-1})
    =\theta
    =\tilde\tau_s(\phi_s(p_s\oplus 0_{m-1})).
\]
In particular
\[
K_0(\tilde\tau_s \circ \iota_e)([\phi_r(p_r \oplus 0_{n-1}])
    = K_0(\tilde\tau_s)([\phi_s(p_s \oplus 0_{m-1})]).
\]
Since $\theta$ is irrational, $K_0(\tilde\tau_s)$ is an isomorphism, and so we deduce
that $K_0(\iota_e)\big([\phi_r(p_r \oplus 0_{n-1})]\big) = [\phi_s(p_s \oplus 0_{m-1})]$,
and that the diagram~\eqref{eq:K0cd} commutes.
\end{proof}

\begin{cor} \label{cor5.13}
Let $\Lambda=\Lambda_E$ be the rank-3 Bratteli diagram associated to a
singly connected weighted Bratteli diagram $E$ together with a compatible collection
$(c_v)$ of 2-cocycles. Let $v \in E^0$ and suppose that $c_v=\dstar{}c_\theta$ for some
$\theta\in \RR\backslash\QQ$. Then the ordered $K$-theory of $C^*(\Lambda_v,c_v)$ is
given by
\[
(K_0,K_0^+,K_1)
    =\Big(\frac{1}{w(v)}\ZZ+\theta\ZZ,
        \Big(\frac{1}{w(v)}\ZZ+\theta\ZZ\Big) \cap [0,\infty),
        \ZZ^2\Big).
\]
\end{cor}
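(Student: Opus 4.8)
The plan is to deduce this directly from Lemma~\ref{lem5.7} together with the Pimsner--Voiculescu computation of the ordered $K$-theory of the rotation algebras recalled just before Lemma~\ref{lem5.12}. Write $n := w(v)$. Since $\theta$ is irrational and $n \ge 1$ is a positive integer, $n\theta$ is also irrational, so $A_{n\theta}$ is a simple irrational rotation algebra. First I would apply Lemma~\ref{lem5.7} with $c_v = \dstar{}c_\theta$ to obtain an isomorphism $\phi\colon A_{n\theta} \otimes M_n(\CC) \cong C^*(\Lambda_v, c_v)$; this reduces the computation to that of the ordered $K$-theory of $A_{n\theta} \otimes M_n(\CC)$, which may then be transported along $K_*(\phi)$.

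For the $K_0$-group, recall from the discussion preceding Lemma~\ref{lem5.12} that for any irrational $\alpha$ and any $k \ge 1$ the map $K_0(\Tr)$ induced by the unique tracial state $\Tr$ on $A_\alpha \otimes M_k(\CC)$ is an order isomorphism of $K_0(A_\alpha \otimes M_k(\CC))$ onto $\frac{1}{k}\ZZ + \frac{\alpha}{k}\ZZ$, the target carrying the order it inherits as a subgroup of $\RR$. Applying this with $\alpha = n\theta$ and $k = n$ gives an order isomorphism onto $\frac{1}{n}\ZZ + \frac{n\theta}{n}\ZZ = \frac{1}{n}\ZZ + \theta\ZZ = \frac{1}{w(v)}\ZZ + \theta\ZZ$. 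Being an order isomorphism, it carries the positive cone $K_0^+$ onto the nonnegative part $\bigl(\frac{1}{w(v)}\ZZ + \theta\ZZ\bigr) \cap [0,\infty)$. Composing with $K_0(\phi)$ then identifies $\bigl(K_0(C^*(\Lambda_v,c_v)),\, K_0^+\bigr)$ with the asserted ordered group.

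For the $K_1$-group, I would use that $K$-theory is stable, so $K_1(A_{n\theta} \otimes M_n(\CC)) \cong K_1(A_{n\theta})$, together with the Pimsner--Voiculescu computation $K_1(A_{n\theta}) \cong \ZZ^2$; transporting along $K_1(\phi)$ yields $K_1(C^*(\Lambda_v, c_v)) \cong \ZZ^2$, as claimed.

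I expect no serious obstacle here, since the content is already packaged in Lemma~\ref{lem5.7} and the recalled rotation-algebra computation. The only point requiring care is the bookkeeping in the substitution $\alpha = n\theta$, $k = n$, which is precisely what reconciles the general formula $\frac{1}{k}\ZZ + \frac{\alpha}{k}\ZZ$ with the group $\frac{1}{w(v)}\ZZ + \theta\ZZ$ appearing in the statement; one should also note at the outset that $n\theta$ is irrational so that the formula is applicable.
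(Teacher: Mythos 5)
Your proposal is correct and follows essentially the same route as the paper: both reduce to the ordered $K$-theory of $A_{w(v)\theta}\otimes M_{w(v)}(\CC)$ via Lemma~\ref{lem5.7} and use the unique tracial state to identify the ordered $K_0$-group, with $K_1\cong\ZZ^2$ coming from Pimsner--Voiculescu and stability. The only difference is one of detail: the paper's proof justifies why the trace determines the positive cone by invoking weak unperforation of $K_0(A_\theta)$ and Blackadar's characterisation $K_0^+=\{0\}\cup\{x : K_0(\tau)(x)>0 \text{ for all } \tau\}$, whereas you take the order-isomorphism assertion recalled before Lemma~\ref{lem5.12} at face value.
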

\begin{proof}
The irrational rotation algebra $A_\theta$ is a stably finite unital exact $C^*$-algebra
with a simple, weakly unperforated $K_0$-group \cite{PimVoi, BlaKumRor} (see also
\cite[p.~36]{Bla}). Hence \cite[p.~42]{Bla} gives
\[
K_0(C^*(\Lambda_v,c_v))^+=\{0\}\cup\{x: K_0(\tau)(x)>0
    \text{ for all } \tau\in T(C^*(\Lambda_v,c_v))\}.
\]
Since $C^*(\Lambda_v,c_v) \cong A_\theta\otimes M_k(\CC)$ admits a unique tracial state
$\tau$, the map $K_0(\tau)\colon K_0(A_\theta\otimes M_k(\CC)) \to \RR^+$ is an order
isomorphism onto its range, and so $K_0(A_\theta \otimes M_k(\CC))^+ =
(\frac{1}{k}\ZZ+\frac{\theta}{k}\ZZ) \cap [0,\infty)$. The result follows.
\end{proof}

\begin{proof}[Proof of Theorem~\ref{cor5.8}]
Theorem~\ref{thm5.6} shows that
\begin{align*}
K_*(C^*(\Lambda, c))
    &\cong K_*(P_0 C^*(\Lambda) P_0) \\
    &\cong \varinjlim \bigg(\bigoplus_{v\in E^0_n} K_*(M_{E^0_1E^*v}(C^*(\Lambda_v, c_v))),\\
        &\qquad\qquad\qquad K_*\Big(\sum_{v\in E^0_n}a_v \mapsto \sum_{w\in E^0_{n+1}}
            \operatorname{diag}_{{e\in E^1w}} (\iota_e(a_{r(e)}))\Big)\bigg).
\end{align*}
Each $K_*(M_{E^0_1E^*v}(C^*(\Lambda_v, c_v))) \cong K_*(C^*(\Lambda_v, c_v))$ and these
isomorphisms are compatible with the connecting maps. Lemma~\ref{lem5.7} shows that each
$C^*(\Lambda_v, c_v) \cong A_{w(v)\theta} \otimes M_{w(v)}(\CC)$ and hence has $K$-theory
$(\frac{1}{w(v)}\ZZ + \theta \ZZ, \ZZ^2)$, and Lemmas \ref{lem5.11}~and~\ref{lem5.12}
show that the connecting maps are as claimed. The order on $K_0$ follows from
Corollary~\ref{cor5.13}.

For the final statement, observe that under the canonical isomorphisms $\frac{1}{w(v)}\ZZ
+ \theta \ZZ \cong \ZZ^2$, the inclusion maps $A_n(v,u)$ of Remark~\ref{rmk:matrices of
maps} are implemented by the matrices $\left(\begin{smallmatrix}w(u)/w(v) & 0 \\0
&1\end{smallmatrix}\right)$ (see also the proof of Lemma~\ref{lem5.12}). The
corresponding maps $B_n(v,u)$ are implemented by the matrices $\left(\begin{smallmatrix}1 & 1 - w(u)/w(v) \\
0 & 1\end{smallmatrix}\right)$. Fix $e \in E^1$, let $v = r(e)$ and $u = s(e)$ and $l =
w(u)/w(v)$, and calculate:
\[
\left(\begin{matrix}
    0 & 1 \\
    1 & 1
\end{matrix}\right)
\left(\begin{matrix}
    1 & 1 - l \\
    0 & l
\end{matrix}\right)
    =
    \left(\begin{matrix}
        0 & l \\
        1 & 1
    \end{matrix}\right)
    =
    \left(\begin{matrix}
        l & 0 \\
        0 & 1
    \end{matrix}\right)
    \left(\begin{matrix}
        0 & 1 \\
        1 & 1
    \end{matrix}\right)
\]
So the automorphisms $T_n := \bigoplus_{v \in E^n_0} \left(\begin{smallmatrix}0 & 1 \\
1 & 1\end{smallmatrix}\right)\colon\bigoplus_{v \in E_n^0} \ZZ^2 \to \bigoplus_{v \in
E_n^0} \ZZ^2$ satisfy $T_n B_n = A_n T_n$ and so there is a group isomorphism
$\varinjlim(\bigoplus_{v \in E_n^0} \ZZ^2, B_n) \cong \varinjlim(\bigoplus_{v \in E_n^0}
\ZZ^2, A_n)$ that carries each $(a,b)\delta_v$ to $(b, a+b)\delta_v$ according to the
communing diagram
\begin{equation*}
\xymatrix{\ZZ^2\delta_v \ar[r]^{\cong} & \frac{1}{w(v)} \ZZ + \theta\ZZ \ar[rrr]^{A_n(v,u)}
    & & & \frac{1}{w(v)} \ZZ + \theta\ZZ \ar[r]^{\cong} & \ZZ^2\delta_u \\
\ZZ^2\delta_v \ar[rrrrr]^{B_n(v,u)} \ar[u]^{T_n(v,v)}
    & & & & &\ZZ^2\delta_u  \ar[u]^{T_n(u,u)}}.
\end{equation*}
After identifying each $\ZZ^2 \delta_v$ with $\frac{1}{w(v)} \ZZ + \theta\ZZ$ as above,
we obtain the desired isomorphism $K_1(C^*(\Lambda), c) \cong K_0(C^*(\Lambda), c)$.
\end{proof}

Having computed the $K$-theory of the $C^*(\Lambda, c)$, we conclude by observing that
they are all classifiable by their $K$-theory. We say that a weighted Bratteli diagram is
cofinal if the underlying Bratteli diagram is cofinal.

\begin{cor}\label{cor:classifiable}
Let $\Lambda=\Lambda_E$ be the rank-3 Bratteli diagram associated to a singly connected
weighted Bratteli diagram $E$. Take $\theta \in \RR \backslash \QQ$, let $c^3_\theta \in
Z^2(\ZZ^3, \TT)$ be as in~\eqref{ck}, and let $c=\dstar{}c^3_\theta\in Z^2(\Lambda,\TT)$.
Then $C^*(\Lambda, c)$ is an $A\TT$-algebra of real rank zero, and is simple if and only
if $E$ is cofinal, in which case it is classified up to isomorphism by ordered $K$-theory
and scale.
\end{cor}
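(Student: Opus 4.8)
The plan is to read off all three assertions from the inductive-limit decomposition of Theorem~\ref{thm5.6} together with the identification of the building blocks in Lemma~\ref{lem5.7}. By Theorem~\ref{thm5.6} the full corner $P_0 C^*(\Lambda, c) P_0$ is the closure of an increasing union $\bigcup_n A_n$, where $A_n \cong \bigoplus_{v \in E_n^0} M_{E_1^0 E^* v}(C^*(\Lambda_v, c_v))$, and by Lemma~\ref{lem5.7} each $C^*(\Lambda_v, c_v) \cong A_{w(v)\theta} \otimes M_{w(v)}(\CC)$. Since $\theta$ is irrational, the Elliott--Evans theorem on the structure of the irrational rotation algebra shows that each $A_{w(v)\theta}$ is a simple unital A$\TT$-algebra of real rank zero; tensoring with a matrix algebra preserves all three properties, so each $A_n$ is a unital A$\TT$-algebra of real rank zero with common unit $P_0$. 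As an A$\TT$-algebra is by definition an inductive limit of circle algebras, the limit $\overline{\bigcup_n A_n} = P_0 C^*(\Lambda, c) P_0$ is again A$\TT$; and real rank zero passes to inductive limits (given a self-adjoint $a$, approximate it by an image $\psi_{n,\infty}(a_n)$, approximate $a_n$ by a finite-spectrum self-adjoint element of $A_n$ using real rank zero of $A_n$, and note that its image still has finite spectrum and remains close to $a$). Thus the corner is a unital A$\TT$-algebra of real rank zero, and since $P_0$ is full, $C^*(\Lambda, c)$ is Morita equivalent to it; because real rank zero is preserved under passage to full corners (Brown--Pedersen), this gives the first assertion for $C^*(\Lambda, c)$ as well.

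For simplicity I would analyse the ideals of the inductive limit $\overline{\bigcup_n A_n}$. Because $\theta$ is irrational, each $A_{w(v)\theta}$, and hence each summand $M_{E_1^0E^*v}(C^*(\Lambda_v, c_v))$, is simple. An ideal of the limit therefore corresponds to a family of subsets $S_n \subseteq E_n^0$ compatible with the connecting maps $\bigoplus_{e\in E^1 w}\iota_e$, that is, closed under following edges of $E$ (and saturated in the reverse direction); the limit is simple precisely when the only such families are $\emptyset$ and $(E_n^0)_n$. This is exactly the statement that the Bratteli diagram $E$ is cofinal. No separate aperiodicity hypothesis is needed: the ``aperiodicity within each level'' is already encoded in the simplicity of the rotation-algebra building blocks, which holds because $\theta \notin \QQ$. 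As simplicity is Morita invariant, $C^*(\Lambda, c)$ is simple if and only if $E$ is cofinal.

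Finally, when $E$ is cofinal the corner $P_0 C^*(\Lambda, c) P_0$ is a simple unital A$\TT$-algebra of real rank zero, so Elliott's classification theorem \cite{Ell} applies and classifies it up to isomorphism by its ordered $K$-theory together with the class of its unit (the scale); these ordered $K$-groups are precisely the ones computed in Theorem~\ref{cor5.8}. Passing back through the Morita equivalence gives the corresponding classification for $C^*(\Lambda, c)$.

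I expect the main obstacle to be the simplicity step: making the correspondence between ideals of the inductive limit and edge-invariant subsets of $E^0$ precise --- in particular pinning down the edge-direction convention so that ``invariant family'' matches the paper's definition of cofinality, and confirming that simplicity of the building blocks genuinely removes the need for any additional aperiodicity condition. The A$\TT$, real-rank-zero, and classification steps are then essentially bookkeeping on top of Elliott--Evans and Elliott's theorem.
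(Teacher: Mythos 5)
Your proposal is correct and follows the paper's route almost exactly: both arguments read off the A$\TT$ and real-rank-zero properties from the inductive-limit decomposition of Theorem~\ref{thm5.6} together with Lemma~\ref{lem5.7} and the Elliott--Evans structure of $A_{w(v)\theta}$ (the paper cites the unique trace, via \cite{BlaBraEllKum}, for real rank zero of the blocks), and both finish by invoking Elliott's theorem \cite{Ell}. The only genuine divergence is in the simplicity step. You propose to compute the ideal lattice of the inductive limit directly, identifying ideals with edge-invariant, saturated families $(S_n)$ of vertex subsets and matching "only trivial families" with cofinality; the paper instead gets the forward direction (not cofinal $\Rightarrow$ not simple) by checking that $E$ is cofinal iff the $3$-graph $\Lambda$ is, and citing the general $k$-graph result \cite[Lemma~7.2]{SimWhiWhi}, and gets the converse by showing that any nonzero homomorphism kills no $s_v$, is therefore injective on each simple block $C^*(\Lambda_v,c_v)$, hence isometric on a dense subalgebra. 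Note that the one fact your route leaves implicit --- that every closed ideal $I$ of the limit satisfies $I = \overline{\bigcup_n (I\cap A_n)}$, which is \emph{not} automatic for inductive limits --- is proved for direct sums of simple unital blocks by exactly the paper's "injective hence isometric" trick, so the two arguments are really the same calculation in different packaging; your version has the small advantage of making the ideal lattice explicit, the paper's of outsourcing the necessity direction to a citation. Both treatments are equally brisk about transferring the A$\TT$ property from the full corner $P_0C^*(\Lambda,c)P_0$ to $C^*(\Lambda,c)$ itself, so no complaint there.
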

\begin{proof}
Each $A_{w(v)\theta}$ is an A$\TT$ algebra \cite{EllEva}, and has real rank zero since it
has a unique trace (see, for example, \cite[Theorem~1.3]{BlaBraEllKum}). Since direct
limits of A$\TT$ algebras are A$\TT$ and since direct limits of $C^*$-algebras of real
rank zero also have real rank zero, $C^*(\Lambda, c)$ is also an A$\TT$-algebra of real
rank zero.

It is straightforward to verify that $E$ is cofinal if and only if $\Lambda$ is cofinal.
Hence \cite[Lemma~7.2]{SimWhiWhi} implies that if $E$ is not cofinal then $C^*(\Lambda,
c)$ is not simple. Now suppose that $E$ is cofinal. Following the argument of
\cite[Proposition~5.1]{BatPasRaeSzy} shows that any ideal of $C^*(\Lambda,c)$ which
contains some $s_v$ is all of $C^*(\Lambda,c)$. So if $\psi$ is a nonzero homomorphism of
$C^*(\Lambda, c)$, then $\psi(s_v) \not= 0$ for all $v$. That is $\psi|_{C^*(\Lambda_v,
c)}$ is nonzero for each $v \in E^0$. But each $C^*(\Lambda_v, c) \cong A_{w(v)\theta}
\otimes M_{w(v)}(\CC)$ is simple, and it follows that $\psi$ is injective on each
$C^*(\Lambda_v, c)$ and hence isometric on each $\bigoplus_{v \in E^0_n} C^*(\Lambda_v,
c)$. So $\psi$ is isometric on a dense subspace of $C^*(\Lambda, c)$ and hence on all of
$C^*(\Lambda, c)$. Thus $C^*(\Lambda, c)$ is simple.

The final assertion follows from Elliott's classification theorem \cite{Ell}.
\end{proof}

\begin{cor}\label{cor:range}
Let $\Lambda=\Lambda_E$ be the rank-3 Bratteli diagram associated to a singly connected
cofinal weighted Bratteli diagram $E$. Take $\theta \in \RR \backslash \QQ$, let
$c^3_\theta \in Z^2(\ZZ^3, \TT)$ be as in~\eqref{ck}, and let $c=\dstar{}c^3_\theta\in
Z^2(\Lambda,\TT)$. Then there is a rank-2 Bratteli diagram $\Gamma$ (as described in
\cite[Definition~4.1]{PasRaeRorSim}) such that $C^*(\Gamma)$ is Morita equivalent to
$C^*(\Lambda, c)$.
\end{cor}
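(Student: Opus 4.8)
The plan is to combine the $K$-theoretic computation of Theorem~\ref{cor5.8} with Elliott's classification theorem and the Effros--Handelman--Shen theorem, and then to quote the range-of-invariants computation of \cite{PasRaeRorSim}. Write $A := C^*(\Lambda, c)$. Since $E$ is cofinal, Corollary~\ref{cor:classifiable} tells us that $A$ is a simple unital A$\TT$-algebra of real rank zero, and hence is classified up to isomorphism by the invariant $(K_0(A), K_0(A)^+, [1_A]_0, K_1(A))$. By Theorem~\ref{cor5.8} this invariant is
\[
\Big(G,\, G^+,\, [1_A]_0,\, G\Big),\qquad
    G := \varinjlim\Big(\bigoplus_{v \in E_n^0}\big(\tfrac{1}{w(v)}\ZZ + \theta\ZZ\big),\, A_n\Big),
\]
where $G$ carries the inductive-limit order $G^+$ and where we have used the final isomorphism $K_1(A) \cong K_0(A)$ of Theorem~\ref{cor5.8} to identify the $K_1$-group with $G$ as a group.

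First I would identify $G$ as a dimension group. Since $A$ is a simple A$\TT$-algebra of real rank zero, $K_0(A) = G$ is a countable, torsion-free, weakly unperforated ordered abelian group with the Riesz interpolation property, and it is simple because $A$ is. By the Effros--Handelman--Shen theorem such a Riesz group is a dimension group, so $G$ may be rewritten as a direct limit $\varinjlim(\ZZ^{r_k}, M_k)$ of simplicial groups with positive connecting homomorphisms $M_k$. This step is essential: the presentation of $G$ furnished by Theorem~\ref{cor5.8} has rank-two summands $\tfrac{1}{w(v)}\ZZ + \theta\ZZ$ that are dense in $\RR$ and hence manifestly not simplicial, and it is only after invoking Effros--Handelman--Shen that $G$ is in a form to which the rank-2 Bratteli diagram construction applies.

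Next I would feed this simplicial presentation into the range-of-invariants computation of \cite{PasRaeRorSim}. From a simple dimension group their construction produces a rank-2 Bratteli diagram $\Gamma$ (in the sense of \cite[Definition~4.1]{PasRaeRorSim}) whose $C^*$-algebra is a simple A$\TT$-algebra of real rank zero with ordered $K_0$ isomorphic to $(G, G^+)$ and with $K_1 \cong K_0$ as abelian groups. The crucial point is that the relation $K_1 \cong K_0$ forced by the circle-algebra building blocks of a rank-2 Bratteli diagram is exactly the relation supplied for $A$ by the last sentence of Theorem~\ref{cor5.8}, so the invariant of $A$ genuinely lies in the achievable range. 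In this way I obtain $\Gamma$ with $\big(K_0(C^*(\Gamma)),\, K_0(C^*(\Gamma))^+,\, K_1(C^*(\Gamma))\big) \cong (G, G^+, G)$.

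Finally I would conclude via stable isomorphism. For separable $C^*$-algebras Morita equivalence coincides with stable isomorphism, so it suffices to exhibit an isomorphism $A \otimes \Kk \cong C^*(\Gamma) \otimes \Kk$. Both algebras are stable simple A$\TT$-algebras of real rank zero (stabilisation commutes with the relevant inductive limits and preserves A$\TT$ and real rank zero), and by the previous step their unscaled invariants $(K_0, K_0^+, K_1)$ both equal $(G, G^+, G)$; Elliott's classification theorem \cite{Ell}, in its form for stable algebras, then gives $A \otimes \Kk \cong C^*(\Gamma) \otimes \Kk$, so that $A$ is Morita equivalent to $C^*(\Gamma)$. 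The main obstacle is the realisation step: one must know that the range theorem of \cite{PasRaeRorSim} is broad enough to capture every simple dimension group together with the $K_1 \cong K_0$ identification, and the role of Effros--Handelman--Shen is precisely to recast $G$ into the simplicial form that their construction consumes.
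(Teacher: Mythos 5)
Your proposal is correct and follows essentially the same route as the paper: show that $K_0(C^*(\Lambda,c))$ is a simple Riesz group, invoke Effros--Handelman--Shen \cite{EffrosHandelmanEtAl:AJM1980} to recast it as a simple dimension group in simplicial form, realise that group together with the $K_1 \cong K_0$ identification from the last part of Theorem~\ref{cor5.8} by a rank-2 Bratteli diagram via \cite[Theorem~6.2(2)]{PasRaeRorSim} (applied there with $T_n = \id$), and conclude Morita equivalence from Elliott's theorem as packaged in Corollary~\ref{cor:classifiable}. The only differences are cosmetic: the paper verifies Riesz interpolation and simplicity of the ordered group directly from the explicit description of $K_0$ in Theorem~\ref{cor5.8} (and notes $K_0 \neq \ZZ$ before extracting a presentation by proper nonnegative matrices), where you appeal to general real-rank-zero/stable-rank-one facts; and $C^*(\Lambda,c)$ is not unital, so the invariant involves the scale rather than $[1_A]_0$, but your final passage to stable classification renders this immaterial.
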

\begin{proof}
We have seen above that $C^*(\Lambda, c)$ is a simple A$\TT$ algebra of real rank zero.
We claim that $K_0(C^*(\Lambda, c))$ is a Riesz group in the sense of
\cite[Section~1]{EffrosHandelmanEtAl:AJM1980}. To see this, observe that it is clearly a
countably group satisfying $n a \ge 0$ implies $a \ge 0$ for all $a \in K_0(C^*(\Lambda,
c))$, Fix finite sets $\{a_i : i \in I\}$ and $\{b_j : j \in J\}$ of elements of $G$ such
that $a_i \le b_j$ for all $i,j$; we must find $c$ such that $a_i \le c \le b_j$ for all
$i,j$. We may assume that the $a_i, b_j$ all belong to some fixed $\oplus_{v \in E_n^0}
\frac{1}{w(v)}\ZZ + \theta \ZZ$, and since the order on this group is the coordinatewise
partial order, it suffices to suppose that they all belong to some fixed
$(\frac{1}{w(v)}\ZZ + \theta \ZZ)\delta_v$; but this is a totally ordered subgroup of
$\RR$, so we can take $c = \max_i a_i$.

It now follows from \cite[Theorem~2.2]{EffrosHandelmanEtAl:AJM1980} that
$K_0(C^*(\Lambda, c))$ is a dimension group. We claim that it is simple. Indeed, suppose
that $J$ is a nontrivial ideal of $K_0(C^*(\Lambda, c))$. Then each $J_v := J \cap
(\frac{1}{w(v)}\ZZ + \theta \ZZ)\delta_v$ is an ideal in this subgroup, and therefore the
whole subgroup since each $\frac{1}{w(v)}\ZZ + \theta \ZZ$ is a simple dimension group.
Since $J$ is nontrivial, we may fix $v \in E^0$, say $v \in E^0_p$ such that $J_v \not=
\emptyset$, and therefore $J_v = (\frac{1}{w(v)}\ZZ + \theta \ZZ)\delta_v$. Choose $v'
\in E^0$, say $v' \in E_m^0$; we just have to show that $J_{v'}$ is nontrivial. Since $E$
is cofinal, there exists $n$ sufficiently large so that $s(v' E^n) \subseteq s(vE^*)$
(see, for example, \cite[Proposition~A.2]{LewinSims:MPCPS2010}). It follows that the
element $1 \delta_{v'}$ of $(\frac{1}{w(v')}\ZZ + \theta \ZZ)\delta_{v'}$ satisfies $1
\delta_{v'} = \sum_{\mu \in v'E^n} 1 \delta_{s(\mu)}$. Let $N := |v' E^n|$. Then
\[
N \delta_v
    = N \Big(\sum_{\nu \in vE^{m+n-p}} 1 \delta_{s(\nu)}\Big)
    \ge \sum_{u \in E^0_{m+n}, vE^*u \not= \emptyset} N\delta_u
    \ge \delta_{v'}.
\]
Since $N \delta_v \in J$ and $J$ is an ideal of the Riesz group $K_0(C^*(\Lambda, c))$,
it follows that $\delta_{v'} \in J$. Hence $K_0(C^*(\Lambda, c))$ is a simple dimension
group.

For any $v$, we have $(\frac{1}{w(v)}\ZZ + \theta \ZZ)\delta_v \cong \ZZ^2$ as a
group, so $K_0(C^*(\Lambda, c))$ is not $\ZZ$. Now the argument of the proof of
\cite[Theorem~6.2(2)]{PasRaeRorSim} shows that there is a sequence of proper nonnegative
matrices $A'_n \in M_{q_n, q_{n+1}}(\NN)$ such that $K_0(C^*(\Lambda, c)) = \varinjlim
(\ZZ^{q_n}, A'_n)$. We may now apply \cite[Theorem~6.2(2)]{PasRaeRorSim} with $B_n = A_n
= A'_n$ and $T_n = \id_{q_n}$ for all $n$ to see that there is a rank-2 Bratteli diagram
$\Gamma$ such that $C^*(\Gamma)$ is simple and has real rank zero and ordered $K$-theory
is identical to that of $C^*(\Lambda, c)$. So the two are Morita equivalent by
Corollary~\ref{cor:classifiable}.
\end{proof}

\section{Examples}\label{sec6}

In this section we present a few illustrative examples of our $K$-theory calculations
from the preceding section.

\begin{eg}
Consider the rank-3 Bratteli diagram $\Lambda$ associated to the singly connected
weighted Bratteli diagram $E$ pictured below.
$$
\parbox{5cm}{
\begin{tikzpicture}[scale=1.5]
    \node[inner sep=0.5pt, circle] (31) at (0,2) {\small$\bullet$};
    \node[anchor=south, inner sep=1pt] at (31.north) {\tiny$1$};
    \node[inner sep=0.5pt, circle] (32) at (1,2) {\small$\bullet$};
    \node[anchor=south, inner sep=1pt] at (32.north) {\tiny$2$};
    \node[inner sep=0.5pt, circle] (33) at (2,2) {\small$\bullet$};
    \node[anchor=south, inner sep=1pt] at (33.north) {\tiny$4$};
    \node[inner sep=0.5pt, circle] (34) at (3,2) {$\cdots$};
    \draw[-latex, black] (32)--(31);
    \draw[-latex, black] (34)--(33);
    \draw[-latex, black] (33)--(32);
\end{tikzpicture}}
$$
Let $v_n$ be the vertex at level $n$, and let $e_n$ denote the unique edge with range
$v_n$. Let $\theta \in \RR \setminus \QQ$, let $c^3_\theta \in Z^2(\ZZ^3, \TT)$ be as
in~\eqref{ck}, and let $c=\dstar{}c^3_\theta\in Z^2(\Lambda,\TT)$; this is (up to
cohomology) the unique 2-cocycle extending $c_1=\dstar{}c_\theta\in Z^2(\Lambda_1,\TT)$.
By Theorem~\ref{thm5.6} the twisted 3-graph $C^*$-algebra $C^*(\Lambda, c)$ is Morita
equivalent to $\varinjlim \left(C^*(\Lambda_{v_n}, c_{v_n}),\ \iota_{e_n}\right)$. Hence
\[
K_0(C^*(\Lambda, c)) \cong \bigcup_n \big(\frac{1}{2^n}\ZZ + \theta\ZZ\big),
\]
with positive cone $(\ZZ[\frac{1}{2}]+\theta\ZZ) \cap [0,\infty)$, and $K_1(C^*(\Lambda,
c)) \cong K_0(C^(\Lambda, c))$ as groups.
\end{eg}

\begin{eg}
Let $\Lambda = \Lambda_E$ be the rank-3 Bratteli diagram associated to the singly
connected weighted Bratteli diagram $E$ given by
\[
\begin{tikzpicture}[xscale=2, yscale=1.5]
    \node[inner sep=0.5pt, circle] (0) at (0,0) {\small$\bullet$};
    \node[anchor=south, inner sep=1pt] at (0.north) {\tiny$1$};
    \node[inner sep=0.5pt, circle] (00) at (2,1) {\small$\bullet$};
    \node[anchor=south, inner sep=1pt] at (00.north) {\tiny$1$};
    \node[inner sep=0.5pt, circle] (01) at (2,-1) {\small$\bullet$};
    \node[anchor=south, inner sep=1pt] at (01.north) {\tiny$1$};
    \node[inner sep=0.5pt, circle] (000) at (3,1.5) {\small$\bullet$};
    \node[anchor=south, inner sep=1pt] at (000.north) {\tiny$1$};
    \node[inner sep=0.5pt, circle] (001) at (3,0.5) {\small$\bullet$};
    \node[anchor=south, inner sep=1pt] at (001.north) {\tiny$1$};
    \node[inner sep=0.5pt, circle] (010) at (3,-0.5) {\small$\bullet$};
    \node[anchor=south, inner sep=1pt] at (010.north) {\tiny$1$};
    \node[inner sep=0.5pt, circle] (011) at (3,-1.5) {\small$\bullet$};
    \node[anchor=south, inner sep=1pt] at (011.north) {\tiny$1$};
    \node at (4,0) {\dots};
    \draw[-latex, black] (00)--(0);
    \draw[-latex, black] (01)--(0);
    \draw[-latex, black] (000)--(00);
    \draw[-latex, black] (001)--(00);
    \draw[-latex, black] (010)--(01);
    \draw[-latex, black] (011)--(01);
\end{tikzpicture}
\]
For each $n\geq 1$ let $v_{n,j}$ denote the $j$'th vertex of $E$ at level $n$ counting
from top to bottom, and for each $n\geq 2$ let $e_{n,j}$ denote the unique edge of source
$v_{n,j}$. Let $\theta \in \RR \setminus \QQ$, let $c^3_\theta \in Z^2(\ZZ^3, \TT)$ be as
in~\eqref{ck}, and let $c=\dstar{}c^3_\theta\in Z^2(\Lambda,\TT)$. Then
\[
K_1(C^*(\Lambda, c)) \cong K_0(C^*(\Lambda, c))
    \cong \left(\bigoplus^{2^{n-1}}_{i=1} (\ZZ + \theta \ZZ), a \mapsto (a,a)\right),
\]
which is isomorphic to $(\ZZ+\theta\ZZ)^{\infty} \subseteq \RR^\infty$, with positive
cone carried to $(\ZZ+\theta\ZZ)^{\infty} \cap [0,\infty)^\infty$.
\end{eg}

\begin{eg}
Consider the rank-3 Bratteli diagram $\Lambda$ associated to the singly connected
weighted Bratteli diagram $E$ pictured below.
$$
\parbox{5cm}{
\begin{tikzpicture}[scale=1.5]
    \node[inner sep=0.5pt, circle] (21) at (0,1) {\small$\bullet$};
    \node[anchor=south, inner sep=1pt] at (21.north) {\tiny$2$};
    \node[inner sep=0.5pt, circle] (22) at (1,1) {\small$\bullet$};
    \node[anchor=south, inner sep=1pt] at (22.north) {\tiny$2$};
    \node[inner sep=0.5pt, circle] (23) at (2,1) {\small$\bullet$};
    \node[anchor=south, inner sep=1pt] at (23.north) {\tiny$2$};
    \node[inner sep=0.5pt, circle] (24) at (3,1) {$\cdots$};
    \node[inner sep=0.5pt, circle] (31) at (0,2) {\small$\bullet$};
    \node[anchor=south, inner sep=1pt] at (31.north) {\tiny$2$};
    \node[inner sep=0.5pt, circle] (32) at (1,2) {\small$\bullet$};
    \node[anchor=south, inner sep=1pt] at (32.north) {\tiny$2$};
    \node[inner sep=0.5pt, circle] (33) at (2,2) {\small$\bullet$};
    \node[anchor=south, inner sep=1pt] at (33.north) {\tiny$2$};
    \node[inner sep=0.5pt, circle] (34) at (3,2) {$\cdots$};
    \draw[-latex, black] (23)--(22);
    \draw[-latex, black] (32)--(31);
    \draw[-latex, black] (24)--(23);
    \draw[-latex, black] (34)--(33);
    \draw[-latex, black] (33)--(22);
    \draw[-latex, black] (33)--(32);
    \draw[-latex, black] (22)--(21);
    \draw[-latex, black] (23)--(32);
    \draw[-latex, black] (22)--(31);
    \draw[-latex, black] (32)--(21);
    \draw[-latex, black] (34)--(23);
    \draw[-latex, black] (24)--(33);
\end{tikzpicture}}
$$
Let $\theta \in \RR \setminus \QQ$, let $c^3_\theta \in Z^2(\ZZ^3, \TT)$ be as
in~\eqref{ck}, and let $c=\dstar{}c^3_\theta\in Z^2(\Lambda,\TT)$. Then
\[
K_1(C^*(\Lambda, c)) \cong K_0(C^*(\Lambda, c))
     \cong \varinjlim \left((\ZZ + \theta\ZZ) \oplus (\ZZ + \theta \ZZ),
        \left(\begin{smallmatrix} \id & \id \\
                                  \id & \id
        \end{smallmatrix}\right)\right),
\]
which is isomorphic to $\ZZ[\frac{1}{2}]+\ZZ[\frac{\theta}{2}]$ with positive cone
$(\ZZ[\frac{1}{2}]+\ZZ[\frac{\theta}{2}]) \cap [0,\infty)$.
\end{eg}

\section{Rank-3 Bratteli diagrams and traces}\label{sec:BDtraces}

In this section we show how to identify traces on twisted $C^*$-algebras associated to
rank-3 Bratteli diagrams.

First we briefly introduce densely defined traces on $C^*$-algebras, following
\cite{PasRenSim}. (Note that there are other definitions of a trace; see for example
\cite{KirRor}.) We let $A^+$ denote the positive cone in a $C^*$-algebra $A$, and we
extend arithmetic on $[0,\infty]$ so that $0\times \infty=0$. A \emph{trace} on a
$C^*$-algebra $A$ is an additive map $\tau:A^+\to [0, \infty]$ which respects scalar
multiplication by non-negative reals and satisfies the \emph{trace property} $\tau(a^*a)
= \tau(aa^*)$, $a\in  A$. A trace $\tau$ is \emph{faithful} if $\tau(a) = 0$ implies $a =
0$. It is \emph{semifinite} if it is finite on a norm dense subset of $A^+$ i.e,
$\overline{\{a\in A^+: 0\leq \tau(a) < \infty\}}=A^+$. A trace $\tau$ is \emph{lower
semicontinuous} if $\tau(a)\leq \lim \inf_n \tau(a_n)$ whenever $a_n\to a$ in $A^+$. We
may extend a semifinite trace $\tau$ by linearity to a linear functional on a dense
subset of $A$. The domain of definition of a densely defined trace is a two-sided ideal
$I_\tau\subset A$.

Following \cite{PasRenSim, Tom} a \emph{graph trace} on a $k$-graph $\Lambda$ is a
function $g\colon\Lambda^0\to \RR^+$ satisfying the \emph{graph trace property}
\[
g(v)=\sum_{\lambda \in v\Lambda^{\leq n}} g(s(\lambda))\quad\text{ for all $v\in \Lambda^0$ and
$n\in \NN^k$.}
\]
A graph trace is \emph{faithful} if it is non-zero on every vertex in $\Lambda$.

\begin{lemma}[\cite{PasRenSim}]\label{lem4.5}
Let $\Lambda$ be a row-finite locally convex $k$-graph, and let $c \in Z^2(\Lambda,
\TT)$. For each semifinite trace $\tau$ on $C^*(\Lambda, c)$ there is a graph trace $g$ on
$\Lambda$ such that $g(v)=\tau(s_v)$ for all $v\in \Lambda^0$.	
\end{lemma}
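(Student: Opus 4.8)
The plan is to set $g(v) := \tau(s_v)$ and to check directly that this function satisfies the graph trace property. Since each $s_v$ is a projection, and hence a positive element of $C^*(\Lambda,c)$, the value $g(v)=\tau(s_v)$ is well-defined and lies in $[0,\infty]$; in particular $g$ is non-negative, as a graph trace must be. So the entire content of the lemma is the graph trace identity $g(v)=\sum_{\lambda\in v\Lambda^{\leq n}} g(s(\lambda))$, and I would verify it by feeding relation~(CK4) through the trace.

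Concretely, I would fix $v\in\Lambda^0$ and $n\in\NN^k$. Because $\Lambda$ is row-finite, the set $v\Lambda^{\leq n}$ is finite, being contained in the finite union $\bigcup_{m\leq n} v\Lambda^m$ of finite sets. Relation~(CK4) then expresses $s_v=\sum_{\lambda\in v\Lambda^{\leq n}} s_\lambda s_\lambda^*$ as a finite sum of positive elements, so additivity of $\tau$ gives
\[
g(v)=\tau(s_v)=\sum_{\lambda\in v\Lambda^{\leq n}}\tau(s_\lambda s_\lambda^*).
\]
Next I would apply the trace property with $a=s_\lambda$ to obtain $\tau(s_\lambda s_\lambda^*)=\tau(s_\lambda^* s_\lambda)$, and then invoke~(CK3), namely $s_\lambda^* s_\lambda=s_{s(\lambda)}$, to conclude that $\tau(s_\lambda s_\lambda^*)=\tau(s_{s(\lambda)})=g(s(\lambda))$. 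Substituting this back into the displayed equation yields exactly $g(v)=\sum_{\lambda\in v\Lambda^{\leq n}} g(s(\lambda))$, which is the graph trace property.

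The computation is short, so there is no deep obstacle; the two points that genuinely need attention are bookkeeping rather than difficulty. First, one must use row-finiteness to know that $v\Lambda^{\leq n}$ is finite, so that additivity of $\tau$ applies term-by-term to an honest finite sum and no appeal to lower semicontinuity is required. Second, there is the interpretation of the codomain $\RR^+$: the identity above holds verbatim as an equation in $[0,\infty]$ under the extended arithmetic ($0\times\infty=0$), and so the graph trace property is unconditional. If one insists that $g$ be $[0,\infty)$-valued, the only extra input needed is the finiteness $\tau(s_v)<\infty$ for each $v$, which is the single place where anything beyond the bare trace axioms and the Cuntz--Krieger relations enters.
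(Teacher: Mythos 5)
Your verification of the graph trace identity is exactly the paper's (the paper dispatches it in one sentence: ``The graph trace property follows from applying $\tau$ to (CK4)''), and your bookkeeping about row-finiteness and additivity is fine. But there is a genuine gap: you never prove that $g(v)=\tau(s_v)$ is finite, and you yourself identify this as ``the only extra input needed'' without supplying it. A graph trace in this paper is a function $\Lambda^0\to\RR^+$, so finiteness is part of what must be shown, and it is precisely the point where the semifiniteness hypothesis enters --- which is why the lemma is not stated for arbitrary traces. Semifiniteness only says that $\tau$ is finite on a \emph{dense} subset of the positive cone; it does not immediately give finiteness on every vertex projection, so this step cannot be waved through.

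The paper's argument for finiteness is the following corner trick, which you should include. Let $I_\tau$ be the two-sided ideal on which the semifinite trace $\tau$ is densely defined. Choose $a\in (I_\tau)_+$ with $\|s_v-a\|<1$; then $\|s_v-s_vas_v\|=\|s_v(s_v-a)s_v\|<1$, so $s_vas_v$ is invertible in the unital corner $s_vC^*(\Lambda,c)s_v$ (whose unit is $s_v$). Writing $b$ for its inverse there, $s_v=b\,s_vas_v$ lies in the ideal $I_\tau$, whence $\tau(s_v)<\infty$. With that supplied, your proof coincides with the paper's.
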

\begin{proof}
Fix $v\in \Lambda^0$. Since $\tau$ is semifinite we may extend it to the two-sided ideal
$I_\tau=\{a:\tau(a)\leq \infty\}$. Choose $a\in (I_\tau)_+$ such that $\|s_v-a\|< 1$.
Then $\|s_v-s_vas_v\|< 1$, and so $s_v=bs_vas_v\in I_\tau$, where $b$ is the inverse of
$s_vas_v$ in $s_vC^*(\Lambda, c)s_v$. In particular $g(v)=\tau(s_v)<\infty$. The graph
trace property follows from applying $\tau$ to~(CK4).
\end{proof}

It turns out, conversely, that each graph trace corresponds to a trace. This however
requires a bit more machinery which we now introduce. Recall that each twisted $k$-graph
$C^*$-algebra $C^*(\Lambda, c)$ carries a \emph{gauge action} $\gamma$ of $\TT^k$ such
that $\gamma_z(s_\lambda) = z^{d(\lambda)} s_\lambda$. Averaging against Haar measure
over this action gives a faithful conditional expectation $\Phi^\gamma\colon a \mapsto
\int_\TT \gamma_z(a)\,dz$ onto the fixed-point algebra $C^*(\Lambda, c)^\gamma$, which is
called the \emph{core}. We have $\Phi^\gamma(s_\mu s^*_\nu) = \delta_{d(\mu), d(\nu)}
s_\mu s^*_\nu$, and so $C^*(\Lambda, c)^\gamma = \clsp\{s_\mu s^*_\nu : d(\mu) =
d(\nu)\}$. For every finite set $F\subseteq \Lambda$ there is a smallest finite set
$F'\subseteq \Lambda$ such that $F\subseteq F'$ and $A_{F'}:=\lsp\{s_\mu s_\nu^* \in
C^*(\Lambda, c)^\gamma: \mu,\nu\in F'\}$ is a finite dimensional $C^*$-algebra
\cite[Lemma~3.2]{RaeSimYee2}. For two finite sets $F\subseteq G\subseteq \Lambda$, we
have $F' \subseteq G'$ so $A_{F'} \subseteq A_{G'}$. So any increasing sequence of finite
subsets $F_n$ such that $\bigcup_n F_n = \Lambda$ gives an AF decomposition of
$C^*(\Lambda, c)^\gamma$.

The following lemma was proved for $c = 1$ by Pask, Rennie and Sims using the augmented
boundary path representation on $\ell^2(\partial\Lambda)\otimes \ell^2(\ZZ^k)$ (see the
first arXiv version of \cite{PasRenSim}). When $c\neq 1$ it is not clear how to represent
$C^*(\Lambda, c)$ on $\ell^2(\partial\Lambda)$, so we proceed in a different way:

\begin{lemma}\label{lem3.3}
Let $\Lambda$ be a row-finite locally convex $k$-graph and let $c\in Z^2(\Lambda,\TT)$.
There is a faithful conditional expectation $E$ of $C^*(\Lambda, c)$ onto $\clsp\{s_\mu
s_\mu^*\}$ which satisfies
$$
E(s_\mu s_\nu^*) = \left\{
       \begin{array}{ll}
       s_\mu s_\mu^* & \text{ if $\mu=\nu$} \\
       0 & \text{ otherwise.}
       \end{array}
       \right.
$$
\end{lemma}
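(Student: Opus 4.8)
The plan is to realise $E$ as a composition $E=\Psi\circ\Phi^\gamma$ of two faithful conditional expectations. The first is the gauge expectation $\Phi^\gamma$ of $C^*(\Lambda,c)$ onto its core $C^*(\Lambda,c)^\gamma=\clsp\{s_\mu s_\nu^*:d(\mu)=d(\nu)\}$, which is already available and faithful and satisfies $\Phi^\gamma(s_\mu s_\nu^*)=\delta_{d(\mu),d(\nu)}s_\mu s_\nu^*$. The second, $\Psi$, will be a faithful conditional expectation of the core onto the commutative subalgebra $D:=\clsp\{s_\mu s_\mu^*\}$ with $\Psi(s_\mu s_\nu^*)=\delta_{\mu,\nu}s_\mu s_\mu^*$ whenever $d(\mu)=d(\nu)$. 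Granting $\Psi$, the values of $E$ are forced: if $d(\mu)\neq d(\nu)$ then $\Phi^\gamma$ already annihilates $s_\mu s_\nu^*$, while if $d(\mu)=d(\nu)$ then $\Phi^\gamma$ fixes $s_\mu s_\nu^*$ and $\Psi$ sends it to $s_\mu s_\mu^*$ or $0$ according as $\mu=\nu$ or $\mu\neq\nu$. Since $D\subseteq C^*(\Lambda,c)^\gamma$ and both maps fix $D$ pointwise and are $D$-bimodule maps, $E$ is a conditional expectation onto $D$; and a composition of faithful positive maps is faithful. So everything reduces to constructing $\Psi$ and proving it faithful.

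To build $\Psi$ I would use the AF structure of the core recalled before the statement: writing $C^*(\Lambda,c)^\gamma=\overline{\bigcup_n A_{F_n'}}$ for an increasing sequence $F_n\uparrow\Lambda$ of finite sets as in \cite[Lemma~3.2]{RaeSimYee2}, each $A_{F_n'}$ is a finite-dimensional $C^*$-algebra with matrix units $s_\mu s_\nu^*$, and its diagonal subalgebra is $D_n:=\lsp\{s_\mu s_\mu^*:\mu\in F_n'\}\cap A_{F_n'}$. Let $\Psi_n\colon A_{F_n'}\to D_n$ be the canonical pinching expectation, sending $s_\mu s_\nu^*$ to $\delta_{\mu,\nu}s_\mu s_\mu^*$. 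The essential compatibility $\Psi_{n+1}|_{A_{F_n'}}=\Psi_n$ holds because, under $A_{F_n'}\subseteq A_{F_{n+1}'}$, use of (CK4) and (CK2) refines $s_\mu s_\nu^*$ (with common source $s(\mu)=s(\nu)=w$) into $\sum_\lambda c(\mu,\lambda)\overline{c(\nu,\lambda)}\,s_{\mu\lambda}s_{\nu\lambda}^*$, and $\mu=\nu$ if and only if $\mu\lambda=\nu\lambda$, so a diagonal matrix unit refines to a sum of diagonal ones and an off-diagonal one to a sum of off-diagonal ones. As each $\Psi_n$ is a norm-one projection, the $\Psi_n$ assemble to a contraction on the dense $*$-subalgebra $\bigcup_n A_{F_n'}$, which extends to a conditional expectation $\Psi$ of the core onto $\overline{\bigcup_n D_n}=D$ with the required values on matrix units; continuity makes $\Psi$ independent of the choice of $(F_n)$.

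The main obstacle is faithfulness of $\Psi$. Each $\Psi_n$ is faithful (pinching onto the diagonal of a finite-dimensional algebra is faithful), but this does not transfer to the inductive limit by any soft argument, and, as the authors note, the boundary-path representation on $\ell^2(\partial\Lambda)$ used when $c=1$ is unavailable for $c\neq1$ because the cocycle obstructs a consistent choice of phases for the degree-raising generators $s_\lambda$. The point I would exploit is that this obstruction disappears for the core, whose spanning elements $s_\mu s_\nu^*$ are degree-balanced: I would construct a genuine $*$-representation $\pi$ of $C^*(\Lambda,c)^\gamma$ (not of the whole algebra) on $\ell^2(\partial\Lambda)$ by $\pi(s_\mu s_\nu^*)\delta_{\nu x'}=\omega(\mu,\nu,x')\,\delta_{\mu x'}$, and $\pi(s_\mu s_\nu^*)\delta_x=0$ for $x\notin\nu\partial\Lambda$, where $\omega(\mu,\nu,x')\in\TT$ is the phase dictated by $c$. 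The delicate step, and where the twist really enters, is to define these phases and verify that $\pi$ respects (CK1)--(CK4) on degree-balanced products; the cocycle identity should make the phases compose correctly precisely because the degrees balance.

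Once $\pi$ is in hand, faithfulness is immediate. Since $s(\mu)\partial\Lambda\neq\emptyset$ for a row-finite locally convex $\Lambda$, every minimal diagonal projection $s_\mu s_\mu^*$ acts nonzero, so $\pi$ is injective on each finite-dimensional $A_{F_n'}$, hence isometric on $\bigcup_n A_{F_n'}$ and therefore faithful on the core. Moreover $\pi$ diagonalises $D$, and a computation on matrix units gives $\langle\pi(a)\delta_x,\delta_x\rangle=\langle\pi(\Psi(a))\delta_x,\delta_x\rangle$ for all $a$ in the core and $x\in\partial\Lambda$. Hence if $\Psi(a^*a)=0$ then $\|\pi(a)\delta_x\|^2=\langle\pi(a^*a)\delta_x,\delta_x\rangle=\langle\pi(\Psi(a^*a))\delta_x,\delta_x\rangle=0$ for every $x$, so $\pi(a)=0$ and thus $a=0$. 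Therefore $\Psi$ is faithful, and $E=\Psi\circ\Phi^\gamma$ is the desired faithful conditional expectation of $C^*(\Lambda,c)$ onto $\clsp\{s_\mu s_\mu^*\}$ with the stated values.
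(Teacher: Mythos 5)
Your construction of $E$ coincides with the paper's: both factor $E=\Psi\circ\Phi^\gamma$, build $\Psi$ by assembling the pinching expectations on the finite-dimensional algebras $A_{F'}$, and extend by continuity, and your check that the pinchings are compatible with the refinements $s_\mu s_\nu^*=\sum_\lambda c(\mu,\lambda)\overline{c(\nu,\lambda)}s_{\mu\lambda}s_{\nu\lambda}^*$ is correct. The divergence, and the gap, is in the faithfulness of $\Psi$. Your entire faithfulness argument rests on a representation $\pi$ of the core on $\ell^2(\partial\Lambda)$ satisfying $\pi(s_\mu s_\nu^*)\delta_{\nu x'}=\omega(\mu,\nu,x')\delta_{\mu x'}$, but you never define the phases $\omega(\mu,\nu,x')$ or verify multiplicativity; you explicitly defer this as ``the delicate step.'' That step is not routine and cannot be waved through: the natural candidate $\omega(\mu,\nu,x')=\lim_m c(\mu,x'(0,m))\,\overline{c(\nu,x'(0,m))}$ need not converge, because the cocycle identity gives $c(\mu,\lambda\lambda')\overline{c(\nu,\lambda\lambda')}=c(\mu,\lambda)\overline{c(\nu,\lambda)}\cdot c(\mu\lambda,\lambda')\overline{c(\nu\lambda,\lambda')}$ and the correction factor has no reason to tend to $1$. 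So as written the proof is missing the one ingredient that carries all the weight.

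The fix is much softer than you allow. Within a fixed degree the twist is invisible ($s_\nu^*s_\eta=\delta_{\nu,\eta}s_{s(\nu)}$ when $d(\nu)=d(\eta)$, since normalised cocycles vanish on identity morphisms), and by \cite[Equation~(3.2)]{SimWhiWhi} each $A_{F'}$ is isomorphic to $\bigoplus_i M_{k_i}(\CC)$ by an isomorphism carrying $\lsp\{s_\mu s_\mu^*\}$ onto the diagonal and each off-diagonal $s_\mu s_\nu^*$ into the span of the off-diagonal elementary matrices; the cocycle only rescales matrix units by unimodular constants and does not disturb the diagonal structure. Hence $\Psi$ is literally the standard conditional expectation of the AF algebra $C^*(\Lambda,c)^\gamma$ onto its canonical diagonal subalgebra, whose faithfulness is classical --- this is the paper's one-line argument. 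You are right that faithfulness of the individual $\Psi_n$ does not pass to the limit by abstract nonsense (the faithfulness constants of the pinchings degenerate as the matrix sizes grow), but the structural fact about canonical AF diagonals is exactly the available ``soft'' input, and it sidesteps any need for a twisted boundary-path representation.
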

\begin{proof}
Recall that the linear map $E: M_n(\CC)\to M_n(\CC)$ such that $E(\theta_{i,j}) =
\delta_{i,j} \theta_{i,i}$ is a faithful conditional expectation.

Fix a finite set $F\subseteq \Lambda$. Select the smallest finite set $F'\subseteq
\Lambda$ such that $F\subseteq F'$ and $A_{F'}=\lsp\{s_\mu s_\nu^* : \mu,\nu\in F',
d(\mu) = d(\nu)\}$ is a finite-dimensional $C^*$-algebra. There exist integers $k_i$ and
an isomorphism $A_{F'}\cong \bigoplus_{i=1}^n M_{k_i}(\CC)$ which carries $\lsp\{s_\mu
s_\mu^*: \mu\in F'\}$ to $\lsp\{\theta_{ii}\}$, and carries each $s_\mu s_\nu^*$ with
$\mu\neq \nu$ into $\lsp\{\theta_{ij}: i\neq j\}$ \cite[Equation~(3.2)]{SimWhiWhi}. Hence
the map $s_\mu s_\nu^*\mapsto \delta_{\mu, \nu} s_\mu s_\mu^*$, from $A_{F'}$ into its
canonical diagonal subalgebra $\lsp\{s_\mu s_\mu^*: \mu\in F'\}$ is a faithful
conditional expectation. Extending this map by continuity to $C^*(\Lambda,
c)^\gamma=\overline{\bigcup_{F'}A_{F'}}$ gives a norm-decreasing linear map $\Psi :
C^*(\Lambda, c)^\gamma \to \clsp\{s_\mu s_\mu^*\}$ satisfying $\Psi(s_\mu s_\nu^*) =
\delta_{\mu,\nu} s_\mu s^*_\mu$. This $\Psi$ is an idempotent of norm one, and is
therefore a conditional expectation by \cite[Theorem~II.6.10.2]{Blackadar}. Since $\Psi$
agrees with the usual expectation of the AF-algebra $C^*(\Lambda, c)^\gamma$ onto its
canonical diagonal subalgebra, it is faithful. Hence the composition $E:=\Psi\circ
\Phi^\gamma$ is the desired faithful conditional expectation from $C^*(\Lambda, c)$ onto
$\clsp\{s_\mu s_\mu^*\}$.
\end{proof}

\begin{lemma}[{c.f. \cite[Proposition~3.10]{PasRenSim}}]\label{lem4.6}
Let $\Lambda$ be a row-finite locally convex $k$-graph and let $c\in Z^2(\Lambda,\TT)$.
For each faithful graph trace $g$ on $\Lambda$ there is a faithful, semifinite, lower
semicontinuous, gauge invariant trace $\tau_g$ on $C^*(\Lambda, c)$ such that
$\tau_g(s_\mu s_\nu^*)=\delta_{\mu,\nu} g(s(\mu))$ for all $\mu,\nu \in \Lambda$.
\end{lemma}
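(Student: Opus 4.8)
The plan is to build $\tau_g$ as the composition $\tau_g := \phi_g \circ E$, where $E\colon C^*(\Lambda,c) \to \clsp\{s_\mu s_\mu^*\}$ is the faithful conditional expectation of Lemma~\ref{lem3.3} and $\phi_g$ is a faithful, lower-semicontinuous, semifinite trace on the commutative diagonal $D := \clsp\{s_\mu s_\mu^*\}$ determined by $g$. Recall from the proof of Lemma~\ref{lem3.3} that $E = \Psi \circ \Phi^\gamma$, where $\Phi^\gamma$ is the gauge expectation onto the core $C^*(\Lambda,c)^\gamma$ and $\Psi$ is the diagonal expectation of the AF-algebra $C^*(\Lambda,c)^\gamma$ onto $D$. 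The formula $\tau_g(s_\mu s_\nu^*) = \delta_{\mu,\nu}\,g(s(\mu))$ is then immediate from $E(s_\mu s_\nu^*) = \delta_{\mu,\nu} s_\mu s_\mu^*$, once $\phi_g$ is arranged so that $\phi_g(s_\mu s_\mu^*) = g(s(\mu))$.

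First I would construct $\phi_g$. Since $s_\mu s_\mu^* s_\nu s_\nu^* = \sum_{(\alpha,\beta)\in\Lambda^{\min}(\mu,\nu)} s_{\mu\alpha}s_{\mu\alpha}^*$, the span $\lsp\{s_\mu s_\mu^*\}$ is a commutative $*$-subalgebra with closure $D$. Setting $\phi_g(s_\mu s_\mu^*) = g(s(\mu))$ and extending linearly gives a well-defined positive functional: the linear relations among the spanning projections reduce to those coming from~(CK4), and applying the putative $\phi_g$ to $s_v = \sum_{\lambda\in v\Lambda^{\leq n}} s_\lambda s_\lambda^*$ recovers precisely the graph-trace identity $g(v) = \sum_{\lambda\in v\Lambda^{\leq n}} g(s(\lambda))$. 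Equivalently, identifying $D$ with $C_0$ of the relevant path space and each $s_\mu s_\mu^*$ with the indicator of its cylinder set, the graph-trace property is exactly the Kolmogorov consistency condition making $\mu \mapsto g(s(\mu))$ a premeasure; integrating against the resulting measure yields a lower-semicontinuous semifinite weight $\phi_g$ on $D$, which is faithful because $g$ is faithful.

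With $\phi_g$ in hand, most of the required properties of $\tau_g$ fall out quickly. Gauge-invariance follows since $\Phi^\gamma \circ \gamma_z = \Phi^\gamma$ forces $E\circ\gamma_z = E$, whence $\tau_g\circ\gamma_z = \tau_g$. Lower semicontinuity holds because $E$ is norm-contractive, so $a_n\to a$ in $A^+$ gives $E(a_n)\to E(a)$ in $D^+$ and then $\tau_g(a) = \phi_g(E(a)) \le \liminf_n \phi_g(E(a_n))$. Semifiniteness holds because the dense $*$-subalgebra $\lsp\{s_\mu s_\nu^*\}$ lies in the domain of $\tau_g$, and faithfulness follows from faithfulness of $E$ and of $\phi_g$, since $\tau_g(a^*a)=0$ forces $E(a^*a)=0$ and hence $a=0$.

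The substance of the lemma, and the step I expect to be the main obstacle, is the trace property. The clean route is to first show that $\rho := \phi_g\circ\Psi$ is a trace on the core $C^*(\Lambda,c)^\gamma$, and then bootstrap to all of $C^*(\Lambda,c)$ using gauge-invariance. For the core, note that $s_\mu s_\nu^* = 0$ unless $s(\mu)=s(\nu)$ (as $s_{s(\mu)}s_{s(\nu)}=0$ when $s(\mu)\ne s(\nu)$), so the nonzero off-diagonal matrix units $s_\mu s_\nu^*$ with $d(\mu)=d(\nu)$ connect only diagonal projections $s_\mu s_\mu^*$, $s_\nu s_\nu^*$ with $s(\mu)=s(\nu)$; consequently any two equivalent minimal diagonal projections in a matrix block of an approximating subalgebra $A_{F'}$ have equal source, on which $g$ takes a common value. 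Thus $\phi_g\circ\Psi$ assigns equal weight to equivalent minimal projections and restricts to a scalar multiple of the matrix trace on each block, so $\rho$ is a trace on each $A_{F'}$ and hence on the core. To pass to the whole algebra, I would decompose elements $a,b$ of the dense subalgebra into gauge-spectral components $a = \sum_m a_m$, $b = \sum_n b_n$ with $\gamma_z(a_m) = z^m a_m$; then $\Phi^\gamma(ab) = \sum_m a_m b_{-m}$ and $\Phi^\gamma(ba)=\sum_m b_{-m}a_m$ lie in $\bigcup_{F'} A_{F'}$, and gauge-invariance together with the trace property of $\rho$ gives $\tau_g(ab) = \sum_m\rho(a_m b_{-m}) = \sum_m \rho(b_{-m}a_m) = \tau_g(ba)$. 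This establishes $\tau_g(a^*a)=\tau_g(aa^*)$ on a dense $*$-subalgebra, and lower semicontinuity extends the trace property to all of $C^*(\Lambda,c)$.
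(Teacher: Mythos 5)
Most of your construction matches the paper's: you build $\tau_g = \phi_g \circ E$ with $E$ the expectation of Lemma~\ref{lem3.3}, you verify well-definedness of $\phi_g$ on the diagonal via the graph-trace property (the paper does this by refining all the $s_\mu s_\mu^*$ to a common degree $N$ using (CK4) and invoking mutual orthogonality --- your Kolmogorov-consistency phrasing is an acceptable repackaging of the same point), and your treatment of gauge-invariance, lower semicontinuity, semifiniteness and faithfulness covers ground the paper delegates to the proof of \cite[Proposition~3.10]{PasRenSimArXiv}.

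The gap is in your argument for the trace property, which is exactly where the real content of the lemma lies. You first show that $\rho := \phi_g\circ\Psi$ is a trace on the core $C^*(\Lambda,c)^\gamma$ (fine, and a nice observation), and then claim that $\rho(a_m b_{-m}) = \rho(b_{-m}a_m)$ follows from ``the trace property of $\rho$.'' It does not: for $m\neq 0$ the homogeneous components $a_m$ and $b_{-m}$ do not lie in the core, so the fact that $\rho$ is tracial \emph{on the core} says nothing about commuting them under $\rho$. The implication is genuinely false in general: for the $1$-graph with one vertex and two loops, the Bernoulli weight $s_\mu s_\mu^* \mapsto 2^{-|\mu|}$ composed with $\Psi$ is the unique trace on the CAR core, yet $\rho(s_{e_1}s_{e_1}^*) = \tfrac12 \neq 1 = \rho(s_{e_1}^* s_{e_1})$, and the resulting state on $\mathcal O_2$ is not a trace. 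So ``$\rho$ is a trace on the core plus gauge-spectral decomposition'' cannot be the whole argument; some input beyond the block structure of the core is needed precisely at the identity $\rho(a_m b_{-m}) = \rho(b_{-m}a_m)$. The paper supplies it by proving the bilinear identity $\tau_g(s_\lambda s^*_\mu s_\eta s^*_\zeta) = \tau_g(s_\eta s^*_\zeta s_\lambda s^*_\mu)$ directly: expand $s^*_\mu s_\eta$ over $\Lambda^{\min}(\mu,\eta)$, keep track of the cocycle phases, and use the bijection $(\alpha,\beta)\mapsto(\beta,\alpha)$ between $\{(\alpha,\beta)\in\Lambda^{\min}(\mu,\eta) : \lambda\alpha=\zeta\beta\}$ and $\{(\beta,\alpha)\in\Lambda^{\min}(\zeta,\lambda) : \eta\beta=\mu\alpha\}$ from \cite{anHLacRaeSim}, together with $g(s(\alpha))=g(s(\beta))$ for minimal common extensions. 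You would need to carry out this (or an equivalent) computation; your appeal to the core trace does not substitute for it. A minor secondary point: the extension of the trace identity from the dense $*$-subalgebra to all of $C^*(\Lambda,c)$ is not literally ``by lower semicontinuity'' (two lower semicontinuous functions agreeing on a dense set need not agree); this is handled by the standard extension machinery that the paper cites from \cite{PasRenSimArXiv}.
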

\begin{proof}
Take a finite $F \subseteq \Lambda$ and scalars $\{a_\mu : \mu \in F\}$. Suppose that
$\sum_{\mu \in F} a_\mu s_\mu s^*_\mu = 0$. Let $N := \bigvee_{\mu \in F} d(\mu)$.
Relation~(CK) implies that $\sum_{\mu \in F} \sum_{\alpha \in s(\mu)\Lambda^{\le N -
d(\mu)}} a_\mu s_{\mu\alpha} s^*_{\mu\alpha} = 0$. Let $G := \{\mu\alpha : \mu \in F,
\alpha \in s(\mu)\Lambda^{\le N - d(\mu)}\}$, and for $\lambda \in G$, let $b_\lambda :=
\sum_{\mu \in F, \lambda = \mu\mu'} a_\mu$. Then
\[
0 = \sum_{\mu \in F} \sum_{\alpha \in s(\mu)\Lambda^{\le N - d(\mu)}} a_\mu s_{\mu\alpha} s^*_{\mu\alpha}
    = \sum_{\lambda \in G} b_\lambda s_\lambda s^*_\lambda.
\]
Since~(CK) implies that the $s_\lambda s^*_\lambda$ where $\lambda \in G$ are mutually
orthogonal, we deduce that each $b_\lambda = 0$. Now the graph-trace property gives
\[
\sum_{\mu \in F} a_\mu g(s(\mu))
    = \sum_{\mu \in F} \sum_{\alpha \in s(\mu)\Lambda^{\le N - d(\mu)}} a_\mu g(s(\alpha))
    = \sum_{\lambda \in G} b_\lambda g(s(\lambda))
    = 0.
\]
So there is a well-defined linear map $\tau^0_g\colon\lsp\{s_\mu s^*_\mu : \mu \in
\Lambda\} \to \RR^+$ such that $\tau^0_g(s_\mu s^*_\mu) = g(s(\mu))$ for all $\mu$. Let
$E : C^*(\Lambda, c) \to \clsp\{s_\mu s^*_\mu : \mu \in \Lambda\}$ be the map of
Lemma~\ref{lem3.3}. Then $E$ restricts to a map from $A_c := \lsp\{s_\mu s^*_\nu :
\mu,\nu \in \Lambda\}$ to $\lsp\{s_\mu s^*_\mu : \mu \in \Lambda\}$. Define $\tau_g :=
\tau^0_g \circ E\colon A_c \to \lsp\{s_\mu s^*_\mu : \mu \in \Lambda\}$.

We claim that $\tau_g$ satisfies the trace condition. For this, it suffices to show that
\begin{equation}\label{eq:tracecheck}
\tau_g(s_\lambda s^*_\mu s_\eta s^*_\zeta) = \tau_g(s_\eta s^*_\zeta s_\lambda s^*_\mu)
    \quad\text{ for all $\lambda,\mu, \eta, \zeta$.}
\end{equation}
Since $E(s_\mu s^*_\nu) = 0$ unless $d(\mu) = d(\nu)$, both sides
of~\eqref{eq:tracecheck} are zero unless $d(\lambda) - d(\mu) = d(\zeta) - d(\eta)$. We
have
\begin{align}
\tau_g(s_\lambda s^*_\mu s_\eta s^*_\zeta)
    &= \sum_{(\alpha,\beta) \in \Lambda^{\min}(\mu,\eta)}
        c(\lambda,\alpha)\overline{c(\mu,\alpha)}c(\eta,\beta)\overline{c(\zeta,\beta)}
            \tau_g(s_{\lambda\alpha} s^*_{\zeta\beta}) \nonumber\\
    &= \sum_{\substack{(\alpha,\beta) \in \Lambda^{\min}(\mu,\eta) \\ \lambda\alpha=\zeta\beta}}
        c(\lambda,\alpha)\overline{c(\mu,\alpha)}c(\eta,\beta)\overline{c(\zeta,\beta)}
            g(s(\alpha)).\label{eq:LHS}
\end{align}
Similarly,
\begin{equation}\label{eq:RHS}
\tau_g(s_\eta s^*_\zeta s_\lambda s^*_\mu)
    = \sum_{\substack{(\beta,\alpha) \in \Lambda^{\min}(\zeta,\lambda) \\ \eta\beta=\mu\alpha}}
        c(\lambda,\alpha)\overline{c(\mu,\alpha)}c(\eta,\beta)\overline{c(\zeta,\beta)}
            g(s(\beta))
\end{equation}
The argument of the paragraph following Equation~(3.6) of \cite{anHLacRaeSim} shows that
$(\alpha,\beta) \mapsto (\beta,\alpha)$ is a bijection from the indexing set on the
right-hand side of~\eqref{eq:LHS} to that on the right-hand side of~\eqref{eq:RHS},
giving~\eqref{eq:tracecheck}.

We now follow the proof of Proposition~3.10 of \cite{PasRenSimArXiv}, beginning from the
second sentence, except that in the final line of the proof, we apply the gauge-invariant
uniqueness theorem \cite[Theorem~3.15]{SimWhiWhi} with $\mathcal{E} = \FE(\Lambda)$
rather than \cite[Theorem~4.1]{RaeSimYee}.
\end{proof}

\begin{thm}
\label{thm4.7} Let $\Lambda$ be a row-finite locally convex $k$-graph and let $c\in
Z^2(\Lambda,\TT)$. The map $g \mapsto \tau_g$ of Lemma~\ref{lem4.6} is a bijection
between faithful graph traces on $\Lambda$ and faithful, semifinite, lower
semicontinuous, gauge invariant traces on $C^*(\Lambda, c)$.
\end{thm}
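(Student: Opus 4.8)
The plan is to prove that the assignment $g \mapsto \tau_g$ of Lemma~\ref{lem4.6} and the assignment $\tau \mapsto g_\tau$, where $g_\tau(v) := \tau(s_v)$, are mutually inverse. Lemma~\ref{lem4.6} already shows that each $\tau_g$ is a faithful, semifinite, lower semicontinuous, gauge-invariant trace, so $g \mapsto \tau_g$ has the correct codomain. One composite is immediate: the defining property $\tau_g(s_\mu s_\nu^*) = \delta_{\mu,\nu} g(s(\mu))$ gives $\tau_g(s_v) = g(s(v)) = g(v)$ for every $v \in \Lambda^0$, so $g_{\tau_g} = g$; in particular $g \mapsto \tau_g$ is injective.

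For surjectivity I would fix a faithful, semifinite, lower semicontinuous, gauge-invariant trace $\tau$ and set $g := g_\tau$. Lemma~\ref{lem4.5} shows that $g$ is a graph trace, and faithfulness of $\tau$ together with $s_v \neq 0$ forces $g(v) = \tau(s_v) > 0$, so $g$ is a faithful graph trace. The goal is then to prove $\tau = \tau_g$, and I would begin by checking that the two traces agree on the spanning elements $s_\mu s_\nu^*$ of the dense $*$-subalgebra $A_c$, in three cases. If $\mu = \nu$, then (CK3) and the trace property give $\tau(s_\mu s_\mu^*) = \tau(s_\mu^* s_\mu) = \tau(s_{s(\mu)}) = g(s(\mu)) = \tau_g(s_\mu s_\mu^*)$. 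If $\mu \neq \nu$ but $d(\mu) = d(\nu)$, then any minimal common extension would have degree $d(\mu) \vee d(\nu) = d(\mu)$ and hence force $\mu = \nu$, so $\Lambda^{\min}(\nu,\mu) = \emptyset$, whence $s_\nu^* s_\mu = 0$ and $\tau(s_\mu s_\nu^*) = \tau(s_\nu^* s_\mu) = 0 = \tau_g(s_\mu s_\nu^*)$. Finally, if $d(\mu) \neq d(\nu)$, gauge-invariance gives $\tau(s_\mu s_\nu^*) = z^{d(\mu)-d(\nu)}\tau(s_\mu s_\nu^*)$ for all $z \in \TT^k$, and choosing $z$ with $z^{d(\mu)-d(\nu)} \neq 1$ forces $\tau(s_\mu s_\nu^*) = 0 = \tau_g(s_\mu s_\nu^*)$. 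Thus $\tau$ and $\tau_g$ agree on $A_c$.

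The remaining step, which I expect to be the main obstacle, is to promote agreement on the dense $*$-algebra $A_c$ to the genuine equality $\tau = \tau_g$ of traces on all of $C^*(\Lambda,c)^+$: lower semicontinuity on its own yields only inequalities under limits, so one cannot merely pass to norm limits. The approach I would take is to show that $\tau$, like $\tau_g$, factors through the faithful conditional expectation $E = \Psi \circ \Phi^\gamma$ of Lemma~\ref{lem3.3}, that is $\tau = \tau \circ E$. Gauge-invariance reduces $\tau$ to its restriction to the core $C^*(\Lambda,c)^\gamma$, and the off-diagonal vanishing established above says that $\tau$ annihilates each $s_\mu s_\nu^*$ with $\mu \neq \nu$; the real content of $\tau = \tau \circ E$ is to upgrade these pointwise identities to an operator identity, which I would do by the same limiting argument used to construct $\tau_g$ in the proof of Lemma~\ref{lem4.6} (following \cite[Proposition~3.10]{PasRenSimArXiv}). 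Once $\tau = \tau \circ E$ is in hand, both $\tau$ and $\tau_g$ are pulled back from the commutative AF diagonal $\clsp\{s_\mu s_\mu^*\}$, on which a lower semicontinuous semifinite trace is determined by its values on the mutually orthogonal projections $\{s_\mu s_\mu^* : d(\mu) = n\}$; since $\tau(s_\mu s_\mu^*) = g(s(\mu)) = \tau_g(s_\mu s_\mu^*)$ for every $\mu$, I conclude $\tau = \tau_g$, which establishes surjectivity and hence the desired bijection.
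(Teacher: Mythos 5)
Your proposal is correct and follows the same route as the paper, whose entire proof of Theorem~\ref{thm4.7} is the single sentence ``Combine Lemma~\ref{lem4.5} and Lemma~\ref{lem4.6}''. The content you supply --- that the two assignments $g \mapsto \tau_g$ and $\tau \mapsto \tau(s_{(\cdot)})$ are mutually inverse, and in particular that a faithful semifinite lower-semicontinuous gauge-invariant trace is determined by its values on the vertex projections, via agreement on the spanning elements $s_\mu s_\nu^*$ and factorisation through the expectation $E$ of Lemma~\ref{lem3.3} --- is exactly the argument the paper leaves implicit, and you defer the remaining analytic step to the same source the paper itself invokes in Lemma~\ref{lem4.6}, namely \cite[Proposition~3.10]{PasRenSimArXiv}.
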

\begin{proof}
Combine Lemma~\ref{lem4.5} and Lemma~\ref{lem4.6}.
\end{proof}

\begin{rmk}\label{rmk:nonfaithful}
If $g$ is a (not necessarily faithful) graph trace, then the graph-trace condition
ensures that $H_g := \{v \in \Lambda^0 : g(v) = 0\}$ is saturated and hereditary in the
sense of \cite[Setion~5]{RaeSimYee}, and so $\Lambda \setminus \Lambda H_g$ is also a
locally convex row-finite $k$-graph \cite[Theorem~5.2(b)]{RaeSimYee}. If $I_{H_g}$ is the
ideal of $C^*(\Lambda, c)$ generated by $\{s_v : v \in H_g\}$, then
\cite[Corollary~4.5]{SimWhiWhi} shows that $C^*(\Lambda, c)/I_{H_g}$ is canonically
isomorphic to $C^*(\Lambda \setminus \Lambda H_g, c|_{\Lambda \setminus \Lambda H_g})$.
It is easy to see that $g$ restricts to a faithful graph trace on $\Lambda \setminus
\Lambda H_g$, so Lemma~\ref{lem4.6} gives a faithful semifinite lower-semicontinuous
gauge-invariant trace on $C^*(\Lambda \setminus \Lambda H_g, c|_{\Lambda \setminus
\Lambda H_g})$. Composing this with the canonical homomorphism $\pi_{H_g}\colon
C^*(\Lambda, c) \to C^*(\Lambda \setminus \Lambda H_g, c|_{\Lambda \setminus \Lambda
H_g})$ gives a semifinite lower-semicontinuous gauge-invariant trace on $C^*(\Lambda,
c)$. So Theorem~\ref{thm4.7} remains valid if the word ``faithful" is removed throughout.
\end{rmk}

\begin{defn}
Let $(E, \Lambda, p)$ be a Bratteli diagram of covering maps between $k$-graphs. For each
$v \in E^0$, let $g_v : \Lambda_v^0 \to \RR^+$ be a graph trace. We say that the
collection $(g_v)$ of graph traces is \emph{compatible} if
\[
g_v(u) = \sum_{e \in vE^1}\sum_{p_e(w) = u} g_{s(e)}(w) \quad\text{ for all $v \in E^0$ and $u \in \Lambda^0_v$.}
\]
\end{defn}

We will show in Lemma~\ref{lem4.4} that the compatibility requirement is necessary and
sufficient to combine the $g_v$ into a graph trace on the $(k+1)$-graph $\Lambda_E$
associated to the Bratteli diagram $E$ of covering maps.

\begin{lemma}\label{lem4.3}	
A function $g\colon\Lambda^0\to \RR^+$ on the vertices of a locally convex $k$-graph
$\Lambda$ is a graph trace if and only if for all $v\in \Lambda^0$ and $i\in \{1,\dots,
k\}$ with $v\Lambda^{e_i}\neq \emptyset$ we have
\begin{equation}\label{eq4.1}
    g(v) = \sum_{\lambda \in v\Lambda^{e_i}} g(s(\lambda)).
\end{equation}
\end{lemma}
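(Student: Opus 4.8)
The plan is to prove the two implications separately, with the forward implication being immediate and the reverse one resting on a factorization of the sets $v\Lambda^{\le n}$. For the forward direction, suppose $g$ is a graph trace and fix $v$ and $i$ with $v\Lambda^{e_i}\neq\emptyset$. Unpacking the definition of $\Lambda^{\le e_i}$, the only degree-$0$ candidate for membership in $v\Lambda^{\le e_i}$ is the vertex $v$ itself, and it is excluded precisely because $v\Lambda^{e_i}\neq\emptyset$, while every path of degree $e_i$ is included without constraint. Hence $v\Lambda^{\le e_i}=v\Lambda^{e_i}$, and specialising the graph-trace identity to $n=e_i$ yields~\eqref{eq4.1}.

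For the reverse direction, I would assume~\eqref{eq4.1} and prove $g(v)=\sum_{\lambda\in v\Lambda^{\le n}}g(s(\lambda))$ for all $v$ and all $n$ by induction on $|n|=n_1+\cdots+n_k$; the base case $n=0$ holds since $v\Lambda^{\le 0}=\{v\}$. For the inductive step, fix $n\neq 0$, choose $i$ with $n_i\geq 1$, and set $m=n-e_i$. The heart of the matter is the following factorization (which is essentially \cite[Lemma~3.12]{RaeSimYee}): if $v\Lambda^{e_i}=\emptyset$ then $v\Lambda^{\le n}=v\Lambda^{\le m}$, while if $v\Lambda^{e_i}\neq\emptyset$ then $(e,\mu)\mapsto e\mu$ is a bijection from $\{(e,\mu):e\in v\Lambda^{e_i},\ \mu\in s(e)\Lambda^{\le m}\}$ onto $v\Lambda^{\le n}$. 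Granting this, in the first case the inductive hypothesis applied to $m$ (note $|m|<|n|$) gives the result at once, and in the second case we compute
\[
\sum_{\lambda\in v\Lambda^{\le n}}g(s(\lambda))
  =\sum_{e\in v\Lambda^{e_i}}\ \sum_{\mu\in s(e)\Lambda^{\le m}}g(s(\mu))
  =\sum_{e\in v\Lambda^{e_i}}g(s(e))
  =g(v),
\]
using the inductive hypothesis (at the vertices $s(e)$, with degree $m$) for the middle equality and~\eqref{eq4.1} for the last.

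The main work, and the only place the hypotheses are genuinely used, lies in establishing this factorization, and it hinges on two observations. The first is a pure consequence of the factorization property: if $v\Lambda^{e_i}=\emptyset$ and $\lambda\in v\Lambda$ has $d(\lambda)_i=0$, then $s(\lambda)\Lambda^{e_i}=\emptyset$, since otherwise an edge $e\in s(\lambda)\Lambda^{e_i}$ would let us split the leading $e_i$-part off $\lambda e$ and produce an element of $v\Lambda^{e_i}$. This forces $d(\lambda)_i=0$ for every $\lambda\in v\Lambda^{\le m}$ and for every $\lambda\in v\Lambda^{\le n}$, and makes the boundary conditions defining $\Lambda^{\le n}$ and $\Lambda^{\le m}$ coincide, giving the first case. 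The second observation is where local convexity enters: if $v\Lambda^{e_i}\neq\emptyset$ and $d(\lambda)_i=0$, then applying local convexity to the leading edge of $\lambda$ together with a fixed edge of $v\Lambda^{e_i}$, and inducting on $|d(\lambda)|$, shows $s(\lambda)\Lambda^{e_i}\neq\emptyset$. Since membership of $\lambda$ in $v\Lambda^{\le n}$ with $d(\lambda)_i<n_i$ would instead demand $s(\lambda)\Lambda^{e_i}=\emptyset$, every element of $v\Lambda^{\le n}$ must satisfy $d(\lambda)_i\geq 1$ and can therefore be factored as $e\mu$ with $e\in v\Lambda^{e_i}$.

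Checking that this factorisation lands in and exhausts the required set—that is, that $\mu\in s(e)\Lambda^{\le m}$ and, conversely, that every such $e\mu$ lies in $v\Lambda^{\le n}$—is then a routine comparison of the degree bound and the boundary constraints in the definition of $\Lambda^{\le\,\cdot}$, with uniqueness of the factorisation supplied by the $k$-graph factorisation property. I expect this bookkeeping with the boundary conditions, together with getting the local-convexity induction right, to be the only delicate part; everything else is a direct unwinding of definitions.
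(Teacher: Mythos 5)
Your proposal is correct and is exactly the argument the paper has in mind: the paper's proof simply notes that the forward direction is clear and that the converse is ``a straightforward induction along the lines of the proof of \cite[Proposition~3.11]{RaeSimYee}'', which is precisely your induction on $|n|$ via the factorisation $v\Lambda^{\le n}\cong\{(e,\mu):e\in v\Lambda^{e_i},\ \mu\in s(e)\Lambda^{\le n-e_i}\}$ (when $v\Lambda^{e_i}\neq\emptyset$) from \cite[Lemma~3.12]{RaeSimYee}, with local convexity used just where you say. You have in fact supplied the details the paper leaves to the reference.
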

\begin{proof}
It is clear that every graph trace satisfies~\eqref{eq4.1}. The reverse implication is a
straightforward induction along the lines of, for example, the proof of \cite[Proposition
3.11]{RaeSimYee}.
\end{proof}

\begin{lemma}\label{lem4.4}
Let $\Lambda = \Lambda_E$ be the $(k+1)$-graph associated to a Bratteli diagram of
covering maps between row finite locally convex $k$-graphs. Let $g_v\colon\Lambda_v^0\to
\RR^+$ be a graph trace for each $v \in E^0$. Define $g\colon\Lambda^0\to \RR^+$ by $g
\circ \iota_v = g_v$ for all $v \in E^0$. Then $g$ is a graph trace if and only if
$(g_v)$ is compatible.
\end{lemma}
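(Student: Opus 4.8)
The plan is to apply Lemma~\ref{lem4.3} to reduce the graph-trace property of $g$ to the single-edge identities in each of the $k+1$ coordinate directions, and then to observe that the identities in the first $k$ directions hold automatically (because each $g_v$ is a graph trace), while the identity in the $(k+1)$st direction is \emph{exactly} the compatibility relation defining $(g_v)$.

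First I would record that, by property~(3) of the construction, every vertex of $\Lambda$ has the form $\iota_v(u)$ for a unique $v\in E^0$ and $u\in\Lambda_v^0$. Applying Lemma~\ref{lem4.3} to the (row-finite, locally convex) $(k+1)$-graph $\Lambda$, the function $g$ is a graph trace if and only if for every such vertex and every $i\in\{1,\dots,k+1\}$ with $\iota_v(u)\Lambda^{e_i}\neq\emptyset$ one has $g(\iota_v(u))=\sum_{\lambda\in\iota_v(u)\Lambda^{e_i}}g(s(\lambda))$.

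Next I would dispose of the directions $i\le k$. Using property~(1) (that $\iota_v$ is degree-preserving) together with properties~(2) and~(3), the assignment $\lambda\mapsto\iota_v(\lambda)$ is a bijection from $u\Lambda_v^{e_i}$ onto $\iota_v(u)\Lambda^{e_i}$ taking $s(\lambda)$ to $\iota_v(s(\lambda))$: surjectivity is the point, and it follows because any $\mu\in\iota_v(u)\Lambda^{e_i}$ has $(k+1)$st degree zero, hence lies in some $\iota_w(\Lambda_w)$ by property~(3), and $r(\mu)=\iota_v(u)$ forces $w=v$ by the disjointness in property~(2). Consequently the direction-$e_i$ identity at $\iota_v(u)$ reads $g_v(u)=\sum_{\lambda\in u\Lambda_v^{e_i}}g_v(s(\lambda))$, which holds because each $g_v$ is a graph trace (again by Lemma~\ref{lem4.3}, now for $\Lambda_v$). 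Thus these first $k$ families of identities hold independently of any compatibility hypothesis.

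The crux is the direction $i=k+1$. Here I would use the bijection $e$ and property~(4) to read off the edges of degree $e_{k+1}$ with range $\iota_v(u)$: since $r(e(w,f))=\iota_{r(f)}(p_f(w))$, such an edge has range $\iota_v(u)$ precisely when $f\in vE^1$ and $p_f(w)=u$, and then $s(e(w,f))=\iota_{s(f)}(w)$, so that $g(s(e(w,f)))=g_{s(f)}(w)$. Because $vE^1\neq\emptyset$ for every $v$, the set $\iota_v(u)\Lambda^{e_{k+1}}$ is always nonempty, so this identity is always active, and it reads
\[
g_v(u)=\sum_{f\in vE^1}\ \sum_{p_f(w)=u} g_{s(f)}(w),
\]
which is exactly the compatibility identity. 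Combining the three observations yields the equivalence. I expect the only delicate point to be the bookkeeping in this final step, namely correctly extracting from property~(4) and the edge-direction convention of the Bratteli diagram that the $e_{k+1}$-edges with range $\iota_v(u)$ are indexed precisely by the pairs $(f,w)$ with $f\in vE^1$ and $w\in p_f^{-1}(u)$.
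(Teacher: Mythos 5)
Your proposal is correct and follows essentially the same route as the paper: both proofs reduce the graph-trace property of $g$ to the single-edge identities of Lemma~\ref{lem4.3}, note that the identities for $i\le k$ hold because each $g_v$ is a graph trace, and identify the $i=k+1$ identity with the compatibility condition. The paper states this in three sentences; your version merely fills in the bookkeeping (the bijectivity of $\iota_v$ on degree-$e_i$ edges and the indexing of $e_{k+1}$-edges via property~(4)), all of which is accurate.
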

\begin{proof}
If $g$ is a graph trace, then~\eqref{eq4.1} with $i = k+1$ shows that the $g_v$ are
compatible. Conversely, if the $g_v$ are compatible, then~\eqref{eq4.1} holds for $i \le
k$ because each $g_v$ is a graph trace, and for $i=k+1$ by compatibility. So the result
follows from Lemma~\ref{lem4.3}.
\end{proof}

\begin{lemma}\label{lem:g<->h}
Let $\Lambda=\Lambda_E$ be the rank-3 Bratteli diagram associated to a singly connected
weighted Bratteli diagram $E$. Consider the Bratteli diagram $F$ such that $F^0 = E^0$
and $F^1 = \bigsqcup_{e \in E^1} \{e\} \times \ZZ/(w(s(e))/w(r(e)))\ZZ$, with $r(e,i) =
r(e)$ and $s(e,i) = s(e)$. For each graph trace $h$ on $F$, there is a graph trace $g_h$
on $\Lambda$ such that $g_h((v,j)) = h(v)$ for all $v \in F^0$ and $j \in \ZZ/w(v)\ZZ$,
and the map $h \mapsto g_h$ is bijection between graph traces on $F$ and graph traces on
$\Lambda$.
\end{lemma}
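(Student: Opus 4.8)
The plan is to reduce everything to the single-direction criterion for graph traces established in Lemma~\ref{lem4.3}. Since $\Lambda=\Lambda_E$ is a row-finite locally convex $3$-graph, a map $g\colon\Lambda^0\to\RR^+$ is a graph trace if and only if it satisfies~\eqref{eq4.1} in each of the three coordinate directions: the two directions $e_1$ (blue) and $e_2$ (red) internal to the cycles $\Lambda_v$, and the Bratteli direction $e_3=e_{k+1}$. I will treat these conditions separately and show that the $e_1$- and $e_2$-conditions force $g$ to be constant on each $\Lambda_v^0$, whereas the $e_3$-condition becomes exactly the graph-trace condition on $F$.

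First I would dispose of directions $e_1$ and $e_2$. In $\Lambda_v=T_2^v\times_1\ZZ/w(v)\ZZ$ each vertex $(v,m)$ is the range of a single edge of degree $e_1$ and of a single edge of degree $e_2$, both with source $(v,\,m+1\bmod w(v))$; in particular both $(v,m)\Lambda^{e_1}$ and $(v,m)\Lambda^{e_2}$ are nonempty. Thus~\eqref{eq4.1} reads $g((v,m))=g((v,\,m+1\bmod w(v)))$ in either direction, and running around the cycle shows that $g$ is constant on $\{(v,j):0\le j<w(v)\}$. Conversely, any $g$ constant on each such cycle satisfies the $e_1$- and $e_2$-conditions. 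Hence the graph traces $g$ on $\Lambda$ satisfying $(e_1,e_2)$ are exactly those of the form $g((v,j))=h(v)$ for a function $h\colon E^0=F^0\to\RR^+$, and the assignments $g\mapsto h:=(v\mapsto g((v,0)))$ and $h\mapsto g_h$ are mutually inverse. It remains only to match the $e_3$-condition with the requirement that $h$ be a graph trace on $F$.

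For this last step I would compute $(v,m)\Lambda^{e_3}$. By properties~(4)--(5) of the construction of $\Lambda_E$ and the covering-map formula of Definition~\ref{def5.6}, the edges of degree $e_3$ with range $(v,m)$ are the $e(w,f)$ with $f\in vE^1$ and $w\in\Lambda_{s(f)}^0$ such that $p_f(w)=(v,m)$, each having source $w$. Because $p_f$ sends $(s(f),k)$ to $(v,\,k\bmod w(v))$ and $w(v)=w(r(f))$ divides $w(s(f))$, the fibre over $(v,m)$ consists of the $l(f)$ vertices $(s(f),\,m+jw(v))$ for $0\le j<l(f)$, where $l(e):=w(s(e))/w(r(e))$. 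Substituting the constancy $g((s(f),\cdot))=h(s(f))$, condition~\eqref{eq4.1} for $e_3$ becomes
\[
h(v)=\sum_{f\in vE^1} l(f)\,h(s(f)).
\]
Since $F^1=\bigsqcup_{e\in E^1}\{e\}\times\ZZ/l(e)\ZZ$ with $s((e,i))=s(e)$ and $r((e,i))=r(e)$, the set $vF^1=\{(e,i):e\in vE^1,\ 0\le i<l(e)\}$ is nonempty (as $vE^1\ne\emptyset$) and $\sum_{\phi\in vF^1}h(s(\phi))=\sum_{e\in vE^1}l(e)\,h(s(e))$; so the displayed identity is precisely the graph-trace condition for $h$ on the $1$-graph $F$, again via the one-direction criterion of Lemma~\ref{lem4.3}. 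Combining the three directions, $g$ is a graph trace on $\Lambda$ if and only if $h$ is a graph trace on $F$, and the bijection $h\mapsto g_h$ follows.

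The only genuine obstacle is the combinatorial bookkeeping in the third step: correctly identifying the sources of the degree-$e_3$ edges into $(v,m)$ and computing the cardinality of the fibre of the vertex map induced by $p_f$. Everything else is immediate from Lemma~\ref{lem4.3} and the absence of branching in the $e_1$- and $e_2$-directions.
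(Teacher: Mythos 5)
Your proposal is correct and follows essentially the same route as the paper: both arguments use the one-direction criterion of Lemma~\ref{lem4.3} to show that the $e_1$- and $e_2$-conditions force constancy of $g$ on each cycle $\Lambda_v^0$, and both identify the $e_3$-condition with the graph-trace identity on $F$ by counting the $w(s(e))/w(r(e))$-element fibres of the covering maps $p_e$. The only cosmetic difference is that the paper packages the $e_3$-direction check as the compatibility condition of Lemma~\ref{lem4.4}, whereas you verify \eqref{eq4.1} for $e_3$ directly; these are the same computation.
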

\begin{proof}
Given a graph trace $h$ on $F$, define functions $g_v : \Lambda_v^0 \to [0,\infty)$ by
$g_v(v,i) = h(v)$ for all $i$. Since each $(v,i)\Lambda_v^{e_1} = (v,i)\Lambda_v^{e_1}
(v,i+1) = \{(a_v, i)\}$ and $(v,i)\Lambda_v^{e_2} = (v,i)\Lambda_v^{e_2}(v, i+1) =
\{(b_v,i)\}$, the $g_v$ are all graph traces by Lemma~\ref{lem4.3}. Since $h$ is a graph
trace, each $h(v) = \sum_{e \in vF^1} h(s(e))$. So each
\begin{align*}
g_v(v,i)
    = \sum_{e \in vF^1} h(s(e))
    &= \sum_{e \in vE^1, j < w(s(e))/w(r(e))} g_{s(e)}\big(s(e), i + j w(r(e))\big)\\
    &= \sum_{e \in vE^1, p_e(s(e),j) = (v,i)} g_{s(e)}(s(e),j).
\end{align*}
So the $g_v$ are compatible, and there is a graph trace $g$ as claimed.

Conversely, given a graph trace $g$ on $\Lambda$, define $h : F^0 \to [0,\infty)$ by
$h(v) = g(v,0)$. Since each $(v,i)\Lambda_v^{e_1} = \{(a_v, i)\}$ and $s(a_v, i) = (v,
i+1)$, we have $g(v, i) = g(v, i+1)$ for all $i$, and so $g(v,i) = g(v,j)$ for all $v \in
E^0$ and $i,j \in \ZZ/w(v)\ZZ$. So each
\[
h(v)
    = g(v,0)
    = \sum_{\alpha \in (v,0)\Lambda^{e_3}} g(s(\alpha))
    = \sum_{e \in vE^1} w(s(e))/w(r(e)) g(s(e),0)
    = \sum_{f \in vF^1} h(s(f)).
\]
So $h$ is a graph trace, and $g = g_h$.
\end{proof}

\begin{cor}
Let $\Lambda=\Lambda_E$ be the rank-3 Bratteli diagram associated to a singly connected
weighted Bratteli diagram $E$, and take $\theta \in \RR$. Let $c^3_\theta \in Z^2(\ZZ^3,
\TT)$ be as in~\eqref{ck}, and let $c=\dstar{}c^3_\theta\in Z^2(\Lambda,\TT)$. Consider
the Bratteli diagram $F$ of Lemma~\ref{lem:g<->h}. Let $\tau$ be a semifinite
lower-semicontinuous trace on $C^*(F)$. There is a gauge-invariant semifinite
lower-semicontinuous trace $\tilde\tau$ on $C^*(\Lambda,c)$ such that
$\tilde\tau(p_{(v,i)}) = \tau(p_v)$ for all $v \in F^0$ and $i \in \ZZ/w(v)\ZZ$. The map
$\tau \mapsto \tilde\tau$ is a bijection between semifinite lower-semicontinuous traces
on $C^*(F)$ and gauge-invariant semifinite lower-semicontinuous traces on
$C^*(\Lambda,c)$. If $\theta$ is irrational then every semifinite lower-semicontinuous
trace on $C^*(\Lambda,c)$ is gauge-invariant.
\end{cor}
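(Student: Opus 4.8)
The plan is to assemble the bijection in the middle of the statement from a chain of three correspondences already available, and then to treat the automatic gauge-invariance in the final sentence separately, as this is the only part that genuinely uses irrationality of $\theta$.

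First I would set up the chain
\[
\{\text{s.f.\ l.s.c.\ traces on } C^*(F)\} \longleftrightarrow \{\text{graph traces on } F\} \longleftrightarrow \{\text{graph traces on } \Lambda\} \longleftrightarrow \{\text{g.i.\ s.f.\ l.s.c.\ traces on } C^*(\Lambda,c)\}.
\]
The outer two correspondences come from Theorem~\ref{thm4.7} together with Remark~\ref{rmk:nonfaithful} (which removes ``faithful''), applied to the $1$-graph $F$ with trivial cocycle and to $\Lambda$ with $c$ respectively; the middle one is Lemma~\ref{lem:g<->h}. For the leftmost correspondence I must upgrade ``gauge-invariant traces on $C^*(F)$'' to ``\emph{all} traces on $C^*(F)$'': since $F$ is a Bratteli diagram it has no cycles, so any path $\alpha$ with $d(\alpha)>0$ satisfies $r(\alpha)\neq s(\alpha)$, whence $\tau(s_\alpha)=\tau(s_\alpha s_{s(\alpha)})=\tau(s_{s(\alpha)}s_\alpha)=0$ for every trace $\tau$; expanding $s_\nu^{*}s_\mu$ via (CK) then forces $\tau(s_\mu s_\nu^{*})=\tau(s_\nu^{*}s_\mu)=0$ whenever $d(\mu)\neq d(\nu)$, so every trace on $C^*(F)$ equals $\tau\circ\Phi^\gamma$ and is gauge-invariant. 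Chasing a trace $\tau$ with $h(v)=\tau(p_v)$ through the chain, and combining $\tau_{g}(s_\mu s_\nu^{*})=\delta_{\mu,\nu}g(s(\mu))$ from Lemma~\ref{lem4.6} with $g_h(v,i)=h(v)$ from Lemma~\ref{lem:g<->h}, yields $\tilde\tau(p_{(v,i)})=g_h(v,i)=h(v)=\tau(p_v)$, which establishes existence of $\tilde\tau$ with the stated values.

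The main work is the last sentence: when $\theta$ is irrational, every s.f.\ l.s.c.\ trace on $C^*(\Lambda,c)$ is gauge-invariant, which then identifies the target of the bijection with all such traces. Here I would pass to the full corner $P_0C^*(\Lambda,c)P_0=\overline{\bigcup_n A_n}$ of Theorem~\ref{thm5.6}, with $A_n\cong\bigoplus_{v\in E_n^{0}}M_{E_1^{0}E^{*}v}(C^*(\Lambda_v,c_v))$. Two observations drive the argument. First, every spanning element $s_\mu s_\nu^{*}$ of $A_n$ has $d(\mu)_3=d(\nu)_3=n$, so the $e_3$-gauge action is trivial on each $A_n$, hence on the corner. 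Second, for irrational $\theta$ Lemma~\ref{lem5.7} gives $C^*(\Lambda_v,c_v)\cong A_{w(v)\theta}\otimes M_{w(v)}(\CC)$, which is unital and simple with a unique tracial state; consequently any trace restricts on each simple summand of $A_n$ to a (possibly infinite) multiple of that unique tracial state, and since the $e_1$- and $e_2$-gauge actions fix the $T_\alpha$ and act within the $C^*(\Lambda_v,c_v)$ factors, uniqueness of the trace forces invariance in those two directions as well. Thus any s.f.\ l.s.c.\ trace on the corner is gauge-invariant on each $A_n$, hence by lower semicontinuity on the dense union $\bigcup_n A_n$, hence on the whole corner.

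Finally I would transfer this back across the full corner: because $P_0$ is gauge-fixed, restriction is a bijection between s.f.\ l.s.c.\ traces on $C^*(\Lambda,c)$ and on $P_0C^*(\Lambda,c)P_0$ that intertwines each $\gamma_z$ with its restriction, so a trace on $C^*(\Lambda,c)$ is gauge-invariant exactly when its restriction to the corner is; by the previous paragraph the restriction is always gauge-invariant, giving the claim and completing the identification of the bijection's range. The step I expect to be the main obstacle is the second one in the corner argument---extracting from irrationality, via the unique-trace property of $A_{w(v)\theta}\otimes M_{w(v)}(\CC)$, the invariance in the $e_1,e_2$-directions---together with the bookkeeping required to handle traces that may be infinite on some summands while keeping every assertion valid at the level of the densely defined trace.
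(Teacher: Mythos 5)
Your argument is correct, and the first two-thirds of it---assembling the bijection as a composite of Theorem~\ref{thm4.7} (with Remark~\ref{rmk:nonfaithful}) applied to $F$ and to $\Lambda$, linked by Lemma~\ref{lem:g<->h}, together with the observation that every trace on the AF algebra $C^*(F)$ is automatically gauge-invariant---is essentially the paper's proof of the first two assertions. Where you genuinely diverge is the final sentence. The paper stays inside $C^*(\Lambda,c)$: it first uses uniqueness of the tracial state on $C^*(\Lambda_v,c_v)\cong A_{w(v)\theta}\otimes M_{w(v)}(\CC)$ (matching norms against the gauge-invariant trace $\tau_g$ built from $g(v,i)=\tau(p_{(v,i)})$) to get $\tau(s_\mu s^*_\nu)=\delta_{\mu,\nu}\tau(p_{s(\mu)})$ for $\mu,\nu\in\Lambda_v$, and then, for arbitrary $\alpha=\eta\mu$ and $\beta=\zeta\nu$ with $\eta,\zeta\in\Lambda^{\NN e_3}$, uses $c(\eta,\mu)=c(\zeta,\nu)=1$ and the trace property to compute $\tau(s_\alpha s^*_\beta)=\tau(s_\mu s^*_\nu s^*_\zeta s_\eta)=\delta_{\alpha,\beta}\tau(p_{s(\alpha)})=\tau_g(s_\alpha s^*_\beta)$, whence $\tau=\tau_g$ directly. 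Your route through the full corner and Theorem~\ref{thm5.6} is also valid and in one respect cleaner: on each summand $M_{E_1^0E^*v}(C^*(\Lambda_v,c_v))$ of $A_n$ you only need that the unique tracial state is fixed by the $\TT^2$-gauge automorphisms, with no norm-matching against $\tau_g$. Two points to tighten. First, the ``possibly infinite multiple'' worry dissolves: the argument of Lemma~\ref{lem4.5} shows that semifiniteness forces $\tau(s_{(v,i)})<\infty$ for every vertex, so $\tau(P_0)<\infty$ and the restriction of $\tau$ to each unital $A_n$ is a genuinely finite trace; agreement of $\tau$ and $\tau\circ\gamma_z$ on the dense union $\bigcup_n A_n$ then extends by norm-continuity of a bounded positive functional rather than by lower semicontinuity. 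Second, your transfer back from the corner rests on injectivity of the restriction map from semifinite lower-semicontinuous traces on $C^*(\Lambda,c)$ to such traces on the full corner $P_0C^*(\Lambda,c)P_0$; this is true (it is part of the standard Morita-invariance of the cone of lower-semicontinuous traces), but it is neither proved nor cited anywhere in the paper, so you should supply a reference or a short proof---this is exactly the step that the paper's direct computation with the elements $s_\alpha s^*_\beta$ is designed to avoid.
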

\begin{proof}
Lemma~\ref{lem4.5} show that each semifinite trace $\tau$ on $C^*(F)$ determines a graph
trace $h = h_\tau$ on $F$ such that $h_\tau(v) = \tau(p_v)$. Lemma~\ref{lem:g<->h} shows
that there is then a graph trace $g = g_h$ on $\Lambda$ such that $g(v,i) = h(v) =
\tau(p_v)$ for all $v \in F^0$. Now Lemma~\ref{lem4.6} and Remark~\ref{rmk:nonfaithful}
yield a gauge-invariant semifinite lower-semicontinuous trace $\tilde\tau = \tau_g$ such
that $\tilde\tau(p_{(v,i)}) = g(v,i)= h(v) = \tau(p_v)$ as claimed. We have
\[\textstyle
    C^*(F) = \overline{\bigcup_n \lsp\{s_\mu s^*_\nu : s(\mu) = s(\nu) \in E^0_n\}}.
\]
Each $\lsp\{s_\mu s^*_\nu : s(\mu) = s(\nu) \in E^0_n\} \cong \bigoplus_{v \in E^0_n}
M_{F^* v}(\CC)$ via $s_\mu s^*_\nu \mapsto \theta_{\mu,\nu}$; in particular this
isomorphism carries each $p_v$ to a minimal projection in the summand $M_{F^* v}(\CC)$.
So each trace on $C^*(F)$ is completely determined by its values on the $p_v$, and so
$\tau \mapsto \tilde\tau$ is injective.

If $\rho$ is a gauge-invariant semifinite lower-semicontinuous trace on $C^*(\Lambda, c)$
then Theorem~\ref{thm4.7} and Remark~\ref{rmk:nonfaithful} shows that $\rho = \tau_g$
where $g$ is the graph trace on $\Lambda$ such that $g(v,i) = \rho(p_{(v,i)})$. Now
Lemma~\ref{lem:g<->h} shows that $g = g_h$ and $g_h(v,i)=h(v)$ for some graph trace $h$
on $C^*(F)$, and then Theorem~\ref{thm4.7} and Remark~\ref{rmk:nonfaithful} give a
gauge-invariant semifinite lower-semicontinuous trace $\tau=\tau_h$ on $C^*(F)$ such that
$\tau(p_v)=h(v)$. Hence $\rho(p_{(v,i)})=\tilde\tau(p_{(v,i)})$. Now $\rho = \tilde\tau$
because they are both gauge-invariant traces, and so Theorem \ref{thm4.7} (and
Lemma~\ref{lem4.6}) shows that gauge-invariant traces are completely determined by their
values on vertex projections.

Suppose that $\theta$ is irrational and that $\tau$ is a semifinite lower-semicontinuous
trace on $C^*(\Lambda, c)$. For $v \in E^0$, let $c_v := c|_{\Lambda_v}$. Then $\tau$
restricts to a trace on each $C^*(\Lambda_v, c_v)$. Lemma~\ref{lem5.7} shows that each
$C^*(\Lambda_v, c_v)$ is isomorphic to $M_{w(v)}(A_\theta)$. The gauge-invariant trace
$\tau_g$ on $C^*(\Lambda,c)$ determined by the graph trace $g(v,i) = \tau(p_{(v,i)})$
restricts to a trace on each $C^*(\Lambda_v, c_v)$ such that $\tau_g(s_\mu s^*_\nu) =
\delta_{\mu,\nu} g(s(\mu)) = \delta_{\mu,\nu} \tau(p_{s(\mu)})$ for all $\mu,\nu \in
\Lambda_v$, and in particular $\|\tau_g|_{C^*(\Lambda_v, c_v)}\| = \sum_{i \in
\ZZ/w(v)\ZZ} g(v,i) = \|\tau|_{C^*(\Lambda_v, c_v)}\|$. Since there is only one trace on
$M_{w(v)}(A_\theta)$ with this norm, it follows that $\tau|_{C^*(\Lambda_v, c_v)} =
\tau_g|_{C^*(\Lambda_v, c_v)}$ and in particular
\begin{equation}\label{eq:tauonsummand}
\tau(s_\mu s^*_\nu) = \delta_{\mu,\nu} \tau(p_{s(\mu)})\quad\text{ for
$\mu,\nu \in \Lambda_v$.}
\end{equation}
Now suppose that $\alpha,\beta \in \Lambda$ and $s(\alpha) =
s(\beta)$. Write $\alpha = \eta\mu$ and $\beta = \zeta\nu$ where $\eta, \zeta\in
\Lambda^{\NN e_3}$ and $\mu,\nu \in \iota_v(\Lambda_v)$ for some $v \in E^0$. Since
$d(\eta)_2 = d(\zeta)_2 = 0$, we have $c(\eta,\mu) = 1 = c(\zeta,\nu)$. This and the
trace condition give
\[
\tau(s_\alpha s^*_\beta)
    = \tau(s_\eta s_\mu s^*_\nu s^*_\zeta)
    = \tau(s_\mu s^*_\nu s^*_\zeta s_\eta).
\]
This is zero unless $r(\zeta) = r(\eta)$, so suppose that $r(\zeta) = r(\eta) \in
\iota_w(\Lambda^0_w)$. Let $m,n \in \NN$ be the elements such that $v \in E_n^0$ and $w
\in E_m^0$. Since $s(\zeta)$ and $s(\eta)$ both belong to $\iota_v(\Lambda_v^0)$, we have
$d(\zeta) = d(\eta) = (n - m)e_3$, and then $s^*_\zeta s_\eta = \delta_{\zeta,\eta}
p_{s(\zeta)}$. So~\eqref{eq:tauonsummand} gives
\[
\tau(s_\alpha s^*_\beta)
    = \delta_{\zeta,\eta} \tau(s_\mu s^*_\nu)
    = \delta_{\zeta,\eta} \delta_{\mu,\nu} \tau(p_{s(\mu)})
    = \delta_{\alpha,\beta} \tau(p_{s(\alpha)})
    = \tau_g(s_\alpha s^*_\beta).
\]
So $\tau = \tau_g$, and is gauge invariant as claimed.
\end{proof}

\end{document}